\let\llncs@addcontentsline\addcontentsline
\patchcmd{\maketitle}{\addcontentsline}{\llncs@addcontentsline}{}{}
\patchcmd{\maketitle}{\addcontentsline}{\llncs@addcontentsline}{}{}
\patchcmd{\maketitle}{\addcontentsline}{\llncs@addcontentsline}{}{}
\newcommand{\myLines}[1]{
\begin{picture}(4,1)
\put(0,0){\line(2,1){2}}
\put(2,0){\vector(0,1){0.7}}
\put(4,0){\line(-2,1){2}}
\end{picture}}
\let\qdrawReal=\qdraw@branches
\newcommand\brOverride{\let\qdraw@branches=\myLines}
\newcommand\brRestore{\let\qdraw@branches=\qdrawReal}
\newcommand{\todo}[1]{\marginpar{\textbf{TODO\footnotemark}}\@latex@warning{TODO: #1}\footnotetext{ #1}}
\newcommand{\Prop}{\textsf{Prop}}
\newcommand{\Formulae}{\textsf{Fm}}
\newcommand{\Terms}{\textsf{Tm}}
\newcommand{\CTerms}{\textsf{Const}}
\newcommand{\VTerms}{\textsf{Var}}
\renewcommand{\phi}{\varphi}
\newcommand{\TermsBool}{\textsf{Term}}
\newcommand{\FormulaeBool}{\textsf{Form}}
\newcommand{\JL}{{\sf JL}}
\newcommand{\LP}{{\sf LP}}
\newcommand{\HLP}{{\sf HLP}}
\newcommand{\LPBool}{{\sf LP}^{B}}
\newcommand{\M}{{\mathcal M}}
\newcommand{\E}{{\mathcal E}}
\newcommand{\V}{{\mathcal V}}
\newcommand{\VExt}{\tilde{\V}}
\newcommand{\Val}{\theta}
\newcommand{\ValExt}{\tilde{\theta}}
\newcommand{\CS}{{\sf CS}}
\newcommand{\BIValExt}{\tilde{\theta}_{v}}
\newcommand{\BIExt}{\tilde{\theta}}
\newcommand{\T}{{\mathcal T}}
\newcommand{\limplies}{\rightarrow}
\newcommand{\liff}{\leftrightarrow}
\newcommand{\lequal}{\approx}
\newcommand{\tapp}{\cdot}
\newcommand{\ttsum}{+}
\newcommand{\tinspect}{!}
\newcommand{\jbox}[1]{#1:\!} 
\newcommand{\Ameet}{\otimes}
\newcommand{\Ajoin}{\oplus}
\newcommand{\Anot}{\ominus}
\newcommand{\Aimplise}{\Rightarrow}
\newcommand{\AbotElement}{0}
\newcommand{\AtopElement}{1}
\newcommand{\Abot}{\oldstylenums{0}}
\newcommand{\Atop}{\oldstylenums{1}}
\newcommand{\justVarOne}{\mathsf{x}}
\newcommand{\justVarTwo}{\mathsf{y}}
\newcommand{\justVarThree}{\mathsf{z}}
\newcommand{\justConsOne}{\mathsf{a}}
\newcommand{\justConsTwo}{\mathsf{b}}
\newcommand{\justConsThree}{\mathsf{c}}
\newcommand{\AElementOne}{a}
\newcommand{\AElementTwo}{b}
\newcommand{\Algebra}{{\mathcal A}}
\newcommand{\OBox}[1]{\Box_{#1}}
\newcommand{\OVal}[1]{\theta({#1})}
\newcommand{\OValExt}[1]{\tilde{\theta}({#1})}
\newcommand{\OCl}[1]{[{#1}]}
\newcommand{\Logic}[1]{\mathsf{#1}} 
\newcommand{\JLCS}[2]{{\sf #1}_{#2}}
\newcommand{\powerset}{\mathcal{P}}
\begin{document}
\title{Algebraic Semantics for the Logic of Proofs}

\author{Amir Farahmand Parsa\inst{1}\thanks{This research was in part supported by a grant from IPM and carried out in IPM-Isfahan Branch.} and Meghdad Ghari\inst{1,2}\thanks{This research was in part supported by a grant from IPM (No.99030420) and carried out in IPM-Isfahan Branch.}}
\institute{School of Mathematics, Institute for Research in Fundamental Sciences (IPM),  Tehran, Iran   \and    Department of Philosophy, Faculty of Literature and Humanities,
	University of Isfahan, Isfahan, Iran\\ \email{a.parsa@ipm.ir, ghari@ipm.ir}
}

\maketitle
\begin{abstract}
We present algebraic semantics for the classical logic of proofs based on Boolean algebras. We also extend the language of the logic of proofs in order to have a Boolean structure on proof terms and equality predicate on terms. Moreover, the completeness theorem and certain generalizations of Stone's representation theorem are obtained for all proposed algebras.
\end{abstract}
{\bf Keywords}: Logic of proofs, Algebraic semantics, Completeness, Representation theorem

\section{Introduction}

Justification logics are modal-like logics that provide a framework for reasoning about epistemic justifications (see \cite{ArtFit11SEP,Art-Fit-Book-2019,Kuz-Stu-Book-2019}). The language of justification logics extends the language of propositional logic by proof terms and expressions of the form $t:A$, with the intended meaning ``$t$ is a justification for $A$'' or  ``$t$ is a proof for $A$''. The \emph{logic of proofs} $\LP$ is the first introduced logic in the family of justification logics due to  Artemov (\cite{Art95TR,Art01BSL}). The logic of proofs is a justification counterpart of modal logic $\Logic{S4}$. Other modal logics have  justification counterparts too (cf. \cite{Bre00TR,BrueGoeKuz10AiML,Fit16APAL,Ghari-APAL-2017,GoeKuz12APAL,Pac05PLS,Rub06CSR}).  

Various semantics have been proposed for the logic of proofs: arithmetical semantics (\cite{Art01BSL}), Mkrtychev models (\cite{Mkr97LFCS}), Fitting possible world models (\cite{Fit05APAL}), modular models (\cite{Art12SLnonote,KuzStu12AiML}), subset models (\cite{LehmannStuder2019}), game semantics (\cite{Ren05TR}), etc. In this paper, we aim to propose an algebraic semantics for the logic of proofs. The only known algebraic semantics for justification logics is due to Baur and Studer \cite{Baur-Studer-2020,Baur-Studer-JLC-2021} and Pischke \cite{Pischke-arXiv-2020}. 

Baur and Studer (\cite{Baur-Studer-2020,Baur-Studer-JLC-2021}) impose a semiring structure on evidence by adding axioms of semirings (on proof terms) to a basic justification logic. The resulted logic, called ${\sf SE}$, is proved to be sound and complete with respect to semiring models (a semiring equipped with an interpretation function, an evidence relation, and a truth assignment). In the justification logic framework, it is common to relativize logics with a set of justified axioms called constant specification. There are various kinds of constant specifications, some of them are used to show the \textit{internalization property}: every theorem is justified by a proof term.\footnote{In fact, the structure of this term reflects exactly the structure of the axiomatic proof of the theorem.} In the logic ${\sf SE}$ no constant specification  is mentioned in its standard form. However, the role of constant specifications is transfered to the set of assumptions.

Pischke (\cite{Pischke-arXiv-2020}) introduces algebraic variants of Mkrtychev models, Fitting models, and subset models for various intermediate justification logics. Since the main focus of \cite{Pischke-arXiv-2020} is on intuitionistic and intermediate justification logics, these algebraic variants are defined over Heyting algebras.

In this paper, we present algebraic semantics for the logic of proofs based on Boolean algebra. In modal logic, a modal algebra $A$ is a Boolean algebra equipped with the operator $\Box : A \to A$  that satisfies certain conditions. The standard method of proving completeness of modal logics with respect to modal algebras is to construct an algebra, called the Tarski-Lindenbaum algebra, out of formulas of the logic. The proof of well-definedness of the operator $\Box$ in the Tarski-Lindenbaum algebra follows from the fact that  the regularity rule is admissible in all normal modal logics:
\[\frac{\phi \leftrightarrow \psi}{\Box \phi \leftrightarrow \Box \psi}\ Reg.\]
Since the justification counterpart of this rule, namely
\[\frac{\phi \leftrightarrow \psi}{t:\phi \leftrightarrow t:\psi}\ JReg,\]
is not admissible in justification logics, we use operators on formulas instead of operators on the carrier of algebras. In Section \ref{sec: LP-CS algebras}, we present algebras for the logic of proofs, called full $\LP$ algebras, and we prove the completeness theorem. We also establish a generalization of Stone's representation theorem and show that the logic of proofs is complete with respect to set algebras. The full $\LP$ algebras are similar to algebraic Mkrtychev models of Pischke in \cite{Pischke-arXiv-2020}.

In Section \ref{sec: LP-CS algebras over arbitrary Boolean algebras}, we extend the language of $\LP$ to have a Boolean structure on proof terms. We also add the equality predicate (on proof terms) to the language. We then add axioms of Boolean algebra for terms to axioms of $\LP$. The resulting logic is denoted by $\LPBool$. Algebras for $\LPBool$, called full $\LPBool$ algebras, contain two extensions of Boolean algebras: one for proof terms and the other for formulas. Then completeness theorem and a generalization of Stone's representation theorem are proved. The results of Section \ref{sec: LP-CS algebras over arbitrary Boolean algebras} on full $\LPBool$ algebras are comparable to the work of Baur and Studer in (\cite{Baur-Studer-2020,Baur-Studer-JLC-2021}) on semiring modals, although $\LPBool$ algebras are defined slightly different from semiring models and further a Boolean algebra on proof terms are used instead of a semiring structure. 

Finally, in Section \ref{sec: LP-CS algebras over arbitrary polynomial Boolean algebras}, we consider the polynomial structure of proof terms and present an alternative class of algebras for $\LPBool$, called polynomial algebras. Again the completeness theorem and a representation theorem with respect to polynomial algebras are proved.

\section{Justification logics}\label{sec:Justification logics}

The language of justification logics is an
extension of the language of propositional logic by the formulas
of the form $t:F$, where $F$ is a formula and $t$ is a term. \textit{Proof terms} or \textit{justification terms} (or
\textit{terms} for short) are built up from (proof)
variables $\justVarOne, \justVarTwo, \justVarThree \ldots$ and (proof) constants $\justConsOne,\justConsTwo,\justConsThree,\ldots$  using several operators depending on the logic: (binary) application `$\tapp$', (binary) sum `$+$', and (unary) verifier `$!$'. 
Proof terms are called \textit{proof polynomials} in \cite{Art95TR,Art01BSL}.

The binary operator $+$ \textit{combines} two justifications: $s+ t$ is a justification for everything justified by $s$ or by $t$. The binary operator $\tapp$ is used to internalize \textit{modus ponens}: if $s$ is a justification for $A \limplies B$ and $t$ is a justification for $A$, then $s\tapp t$ is a justification for $B$. The unary operator $!$ is a \textit{verifier}: if $t$ is a justification for $A$, then this fact can be verified by the justification $! t$.


Proof term and formulas are constructed by the following mutual grammar:  
\begin{gather*}
s, t  ::= \justConsThree \mid \justVarOne \mid s + t \mid  s \tapp t \mid   \ \tinspect t  \, , \\
\phi, \psi  ::= p \mid \bot \mid \neg \phi \mid \phi \vee \psi  \mid \jbox{t} \phi \, .
\end{gather*}
where $\justConsThree \in \CTerms$, $\justVarOne \in \VTerms$, and $p \in \Prop$. Other connectives $\wedge, \limplies, \liff$ are defined as usual. In particular, $\phi \limplies \psi := \neg \phi \vee \psi$. 

Let $\Terms$ and $\Formulae$ denote the set of all terms and the set of all formulas of $\LP$ respectively.

We now begin with describing the axiom schemes and rules of 
justification logics. The set of axiom schemes of the logic of proofs $\LP_\emptyset$ is:

\begin{description}
	\item[PL1.] $\phi \limplies (\phi \vee \psi)$,
	\item[PL2.] $(\phi \vee \psi) \limplies (\psi \vee \phi)$,
	\item[PL3.] $\phi \vee \phi \limplies \phi$,
	\item[PL4.] $(\phi \limplies \psi) \limplies (\chi \vee \phi \limplies \chi \vee \psi)$,
	\item[PL5.] $\bot \limplies \phi$,
	\item[Appl.]  $s:(\phi \limplies \psi ) \limplies (t:\phi  \limplies (s\tapp t):\psi )$, 
	\item[Sum.]  $s:\phi \vee t:\phi \limplies (s\ttsum t):\phi$
	\item[jT.] $t:\phi \rightarrow \phi $,
	\item[j4.] $t:\phi \rightarrow !t:t:\phi $.
\end{description}

The only rule of inference of $\LP_\emptyset$ is:
\begin{description}
	\item[MP.] \textit{Modus Ponens}, 
	\[\frac{\phi  \quad \phi  \limplies \psi }{\psi } \]
\end{description}

Given a justification logic $\JL$, a \textit{constant specification} $\CS$ for $\JL$ is a set of formulas of
the form $\justConsThree:\phi $, where $c$ is a proof constant and $\phi $ is an axiom instance of $\JL$. A constant specification $\CS$ is called \textit{axiomatically appropriate} provided for every axiom instance $\phi$  of $\JL$ there exists a proof constant $c$ such that $\justConsThree:\phi \in \CS$. The total constant specification $\mathsf{TCS}$ is defined as follows:
\[ \mathsf{TCS} = \{ \justConsThree:\phi \mid \mbox{$c$ is a proof constant and $\phi $ is an axiom instance} \}.\]
Note that $\mathsf{TCS}$ is axiomatically appropriate.

Given a constant specification $\CS$ for $\JL$, the justification logic $\JL_\CS$ is an extension of $\JL_\emptyset$ that has the formulas of $\CS$ as extra axioms. From now on when we write $\JL_\CS$ we mean that $\CS$ is a constant specification for $\JL$.

Non-empty constant specifications are used to show the internalization property for the logic of proofs. This property simulates the necessitation rule from normal modal logic.

\begin{lemma}[Internalization] \label{lem: Internalization}
	Suppose that $\CS$ is an axiomatically appropriate constant specification. If $\vdash_{\LP_\CS} \phi$, then there is a term $t \in \Terms$ such that $\vdash_{\LP_\CS} t:\phi$.
\end{lemma}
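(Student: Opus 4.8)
The plan is to proceed by induction on the length of a derivation of $\phi$ in $\LP_\CS$, producing at each step a term $t$ together with a proof of $t:\phi$; the term built this way mirrors the shape of the given derivation. There are three cases according to how the final formula $\phi$ enters the derivation: $\phi$ is an instance of an $\LP$ axiom scheme, $\phi$ is a member of $\CS$, or $\phi$ is inferred by \textbf{MP}.

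If $\phi$ is an instance of one of the $\LP$ axiom schemes (\textbf{PL1}--\textbf{PL5}, \textbf{Appl}, \textbf{Sum}, \textbf{jT}, \textbf{j4}), then axiomatic appropriateness of $\CS$ supplies a proof constant $c$ with $c:\phi \in \CS$; since every member of $\CS$ is an axiom of $\LP_\CS$, we have $\vdash_{\LP_\CS} c:\phi$, and we take $t := c$. If $\phi$ is itself a member of $\CS$, say $\phi = c:\psi$ with $\psi$ an axiom instance, then $\phi$ is an axiom of $\LP_\CS$, so $\vdash_{\LP_\CS} c:\psi$; instantiating \textbf{j4} yields $\vdash_{\LP_\CS} c:\psi \limplies \tinspect c:(c:\psi)$, and one application of \textbf{MP} gives $\vdash_{\LP_\CS} \tinspect c:(c:\psi)$, so here $t := \tinspect c$.

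The remaining case is when $\phi$ is obtained by \textbf{MP} from two earlier formulas $\chi$ and $\chi \limplies \phi$. The induction hypothesis provides terms $s$ and $u$ with $\vdash_{\LP_\CS} s:(\chi \limplies \phi)$ and $\vdash_{\LP_\CS} u:\chi$. The \textbf{Appl} axiom, suitably instantiated, gives
\[
\vdash_{\LP_\CS} s:(\chi \limplies \phi) \limplies \bigl(u:\chi \limplies (s \tapp u):\phi\bigr),
\]
and two applications of \textbf{MP} deliver $\vdash_{\LP_\CS} (s \tapp u):\phi$, so we set $t := s \tapp u$. This closes the induction.

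The only genuinely delicate point is the constant-specification base case: one must notice that the axioms $c:\psi \in \CS$ are \emph{not} themselves instances of the $\LP$ schemes, so axiomatic appropriateness does not apply to them directly; the \textbf{j4} (positive introspection) axiom is exactly what lets us lift such an axiom to the justified statement $\tinspect c:(c:\psi)$. Everything else is routine bookkeeping, with the application operator $\tapp$ internalizing modus ponens and the verifier $\tinspect$ handling the $\CS$-axioms. Note that axiomatic appropriateness is used essentially, since without a constant witnessing each $\LP$ axiom the very first base case could fail.
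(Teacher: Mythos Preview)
Your proof is correct and follows essentially the same approach as the paper's: induction on the derivation with the same three cases (axiom instances handled via axiomatic appropriateness, $\CS$-members handled via \textbf{j4}, and \textbf{MP} handled via \textbf{Appl}), yielding the same terms $c$, $\tinspect c$, and $s \tapp u$ respectively.
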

\begin{proof}
	The proof is by induction on the derivation of
	$\phi$. We have two base cases:
		\begin{itemize}
			\item If $\phi$ is an axiom instance of $\LP_\emptyset$. Then, since $\CS$ is axiomatically appropriate, there is a proof constant $\justConsThree$ such that $\justConsThree:\phi \in \CS$. Thus, put $t:= \justConsThree$.
			
			\item If $\phi = \justConsThree:\psi \in \CS$, then using axiom j4 and MP we get $\vdash_{\LP_\CS} !\justConsThree : \justConsThree : \psi$. Thus, put $t:= !\justConsThree$.
		\end{itemize}
	For the induction step suppose $\phi$ is obtained by Modus Ponens from $\psi \limplies \phi$ and $\psi$. By the induction hypothesis, there are terms $r$
		and $s$ such that $\vdash_{\LP_\CS} r:(\psi \limplies \phi)$ and $\vdash_{\LP_\CS} s:\psi$. Then put $t: = r \cdot s$ and use the axiom jK to obtain $\vdash_{\LP_\CS} r \cdot s: \phi$.
\qed	
\end{proof}

\begin{lemma}[Lifting] \label{lem: Lifting}
	Suppose that $\CS$ is an axiomatically appropriate constant specification. If 
	$$\psi_1, \ldots, \psi_n \vdash_{\LP_\CS} \phi,$$
	then there is a term $t(\vec{\justVarOne}) \in \Terms$ such that %
	$$\justVarOne_1: \psi_1, \ldots, \justVarOne_n: \psi_n \vdash_{\LP_\CS} t(\vec{\justVarOne}):\phi,$$
	where $\vec{\justVarOne}$ denotes $\justVarOne_1, \ldots, \justVarOne_n$.
\end{lemma}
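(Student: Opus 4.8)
The plan is to prove this by induction on the derivation of $\phi$ from the assumptions $\psi_1, \ldots, \psi_n$ in $\LP_\CS$, generalizing the argument already used for Lemma~\ref{lem: Internalization}. As there, the term $t(\vec{\justVarOne})$ is built up in step with the derivation, so that its syntactic structure mirrors the proof; the essential new ingredient is a base case handling the hypotheses themselves.

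For the base cases I would distinguish three situations. First, if $\phi$ is one of the assumptions $\psi_i$, then the formula $\justVarOne_i : \psi_i$ is already among the hypotheses on the right-hand side, so I simply take $t(\vec{\justVarOne}) := \justVarOne_i$; this is the one genuinely new case relative to internalization, and it is what forces the term to depend on the variables $\vec{\justVarOne}$. Second, if $\phi$ is an axiom instance of $\LP_\emptyset$, then axiomatic appropriateness of $\CS$ supplies a proof constant $\justConsThree$ with $\justConsThree : \phi \in \CS$, and I take $t(\vec{\justVarOne}) := \justConsThree$, a constant term not depending on $\vec{\justVarOne}$. Third, if $\phi = \justConsThree : \psi \in \CS$, then axiom j4 together with MP yields $\vdash_{\LP_\CS} \tinspect\justConsThree : \justConsThree : \psi$, so I set $t(\vec{\justVarOne}) := \tinspect\justConsThree$.

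For the induction step, suppose $\phi$ is obtained by Modus Ponens from $\chi \limplies \phi$ and $\chi$. By the induction hypothesis there are terms $r(\vec{\justVarOne})$ and $s(\vec{\justVarOne})$ with $\justVarOne_1 : \psi_1, \ldots, \justVarOne_n : \psi_n \vdash_{\LP_\CS} r(\vec{\justVarOne}) : (\chi \limplies \phi)$ and $\justVarOne_1 : \psi_1, \ldots, \justVarOne_n : \psi_n \vdash_{\LP_\CS} s(\vec{\justVarOne}) : \chi$. Instantiating axiom Appl to the formula $r(\vec{\justVarOne}) : (\chi \limplies \phi) \limplies \bigl(s(\vec{\justVarOne}) : \chi \limplies (r(\vec{\justVarOne}) \tapp s(\vec{\justVarOne})) : \phi\bigr)$ and applying MP twice, I obtain the conclusion with $t(\vec{\justVarOne}) := r(\vec{\justVarOne}) \tapp s(\vec{\justVarOne})$.

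The step I expect to require the most care is bookkeeping rather than logic: I must ensure that all the terms produced along the various branches of the derivation are expressed over the same fixed vector $\vec{\justVarOne} = \justVarOne_1, \ldots, \justVarOne_n$, even when a particular subderivation uses only some of the assumptions. This is handled by allowing $t(\vec{\justVarOne})$ to contain variables vacuously, so that the term attached to each node of the proof is always a term in the full list $\justVarOne_1, \ldots, \justVarOne_n$; with this convention the application step combines the two subterms without any clash of variables.
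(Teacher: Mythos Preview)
Your argument is correct and follows exactly the approach the paper takes: an induction on the derivation mirroring the proof of Lemma~\ref{lem: Internalization}, with the single new base case $\phi = \psi_i$ handled by putting $t := \justVarOne_i$. The paper's proof is in fact just a one-line reference to Lemma~\ref{lem: Internalization} plus this new case, so your write-up is a faithful (and more explicit) expansion of it.
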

\begin{proof}
	The proof is similar to the proof of Lemma \ref{lem: Internalization}. The only new case is that $\phi = \psi_i$, for some $1 \leq i \leq n$. In this case put $t := \justVarOne_i$.
	\qed	
\end{proof}

\begin{example}\label{ex: example of internalization}
	Here is a proof of $p \limplies p$ in $\LP_\emptyset$. In the following proof, suppose that $p \vee p \limplies p$,  $p \limplies p \vee p$, and $((p \vee p) \limplies p) \limplies ((p \limplies (p \vee p)) \limplies (p \limplies p))$ are axiom instances of propositional logic.
	\begin{enumerate}
		\item $((p \vee p) \limplies p) \limplies ((p \limplies (p \vee p)) \limplies (p \limplies p))$, \hfill axiom instance PL4
		
		\item $p \vee p \limplies p$, \hfill axiom instance PL3
		
		\item $p \limplies p \vee p$, \hfill axiom instance PL1
		
		\item $(p \limplies (p \vee p)) \limplies (p \limplies p)$, \hfill from  1 and 3 by MP
		
		\item $p \limplies p$. \hfill from 2 and 4 by MP
	\end{enumerate}
	Using the proof of the internalization lemma, we assign a proof term to each line of the above proof. For $\justConsOne, \justConsTwo, \justConsThree \in \CTerms$ we have:
	\begin{description}
		\item[$1'$.] $\justConsOne: [((p \vee p) \limplies p) \limplies ((p \limplies (p \vee p)) \limplies (p \limplies p))]$, \hfill from $\mathsf{TCS}$
		
		\item[$2'$.] $\justConsTwo: (p \vee p \limplies p)$, \hfill from $\mathsf{TCS}$
		
		\item[$3'$.] $\justConsThree: (p \limplies p \vee p)$, \hfill from $\mathsf{TCS}$

		\item[$4'$.] $\justConsOne \tapp \justConsTwo: [(p \limplies (p \vee p)) \limplies (p \limplies p)]$, \hfill from  1 and 3 by MP
		
		\item[$5'$.] $(\justConsOne \tapp \justConsTwo) \tapp \justConsThree: (p \limplies p)$. \hfill from 2 and 4 by MP
	\end{description}
	Thus, we showed that $\vdash_{\LP_\mathsf{TCS}} (\justConsOne \tapp \justConsTwo) \tapp \justConsThree: (p \limplies p)$. Note that the proof term $t = (\justConsOne \tapp \justConsTwo) \tapp \justConsThree$ includes all the information that exist in the proof of $p \limplies p$. Three proof constants $\justConsOne, \justConsTwo, \justConsThree$ in $t$ show that three axiom instances are used in the proof (see the axioms used in the steps $1'$--$3'$ above), and the two application operators ``$\tapp$'' in $t$ show that two applications of the rule MP are used in the proof.
\end{example}

In the remaining of this section, we recall a well known semantics  for the logic of proofs were originally introduced by Mkrtychev in \cite{Mkr97LFCS} (see also the basic modular models of \cite{KuzStu12AiML} for the logic of proofs).

\begin{definition}\label{def: LP-models}
	An $\LP_\CS$ model $\M=(\E, \V)$ consists of an evidence function $\E:\Terms \times \Formulae \to \{0,1\}$ and a valuation $\V: \Prop \to \{0,1\}$. The valuation $\V$ is extended to all formulas $\VExt: \Formulae \rightarrow \{0,1\}$ as follows:
	\begin{align*}
		\VExt(p) &= \V(p), \\
		\VExt(\bot) &= \AbotElement, \\
		\VExt(\neg \phi ) &=  1 - \VExt(\phi), \\
		\VExt(\phi \vee \psi) &= \max (\VExt(\phi),  \VExt(\psi)), \\
		\VExt(t: \phi) &= \E(t,\phi).
	\end{align*}
	The evidence function $\E$ should satisfy the following conditions:
	\begin{description}
		\item[Appl.] $\min (\E(s, \phi \limplies \psi), \E(t, \phi)) \leq \E(s \tapp t, \psi)$,
		
		\item[Sum.] $\max (\E(s,\phi), \E(t,\phi)) \leq \E(s \ttsum  t, \phi)$,
		
		\item[jT.] $\E(t, \phi) \leq \VExt(\phi)$,
		
		\item[j4.] $\E(t, \phi) \leq \E(!t, t:\phi)$,
		
		\item[CS] $\E(c, \phi) = \AtopElement$, for $\justConsThree:\phi \in \CS$. 
	\end{description}
	 A formula $\phi$ is $\LP_\CS$-valid, denoted by $\models_{\LP_\CS} \phi$, if $\VExt(\phi) = 1$ for every $\LP_\CS$ model $\M=(\E, \V)$.
\end{definition}


The proof of soundness and completeness theorems is given in \cite{Mkr97LFCS} and \cite{KuzStu12AiML}. 

\begin{theorem}\label{thm: Completeness LP models}
	Let $\CS$ be a constant specification for $\LP$. Then, $\vdash_{\LP_\CS} \phi$ if{f} $\models_{\LP_\CS} \phi$.
\end{theorem}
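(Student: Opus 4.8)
The plan is to prove the two directions separately. Soundness ($\vdash_{\LP_\CS}\phi \Rightarrow \models_{\LP_\CS}\phi$) I would handle by a routine induction on the length of the derivation. First I would check that each propositional axiom PL1--PL5 evaluates to $1$ under every $\VExt$: since $\VExt$ respects the classical truth tables on $\bot$, $\neg$, $\vee$ (and hence on the defined connective $\limplies$), this is a direct computation. For the justification axioms I would read off each validity from the corresponding closure condition on $\E$: the Appl axiom is valid precisely because $\min(\E(s,\phi\limplies\psi),\E(t,\phi)) \leq \E(s\tapp t,\psi)$; the Sum axiom from the Sum condition; jT from $\E(t,\phi)\leq\VExt(\phi)$; j4 from $\E(t,\phi)\leq\E(\tinspect t, t:\phi)$; and each $\CS$-axiom $\justConsThree:\phi$ from the CS condition $\E(c,\phi)=\AtopElement$. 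Finally, MP preserves validity because $\VExt(\phi\limplies\psi)=\max(1-\VExt(\phi),\VExt(\psi))$ forces $\VExt(\psi)=1$ whenever $\VExt(\phi)=\VExt(\phi\limplies\psi)=1$.

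For completeness I would argue the contrapositive by constructing a canonical Mkrtychev model out of a single maximal consistent set, exploiting the fact that Mkrtychev models have only one world. Suppose $\not\vdash_{\LP_\CS}\phi$; then $\{\neg\phi\}$ is $\LP_\CS$-consistent, and by a Lindenbaum argument (using ordinary propositional closure under MP) I would extend it to a maximal $\LP_\CS$-consistent set $\Gamma$. I would then define the model $\M=(\E,\V)$ by $\V(p)=1$ iff $p\in\Gamma$ and $\E(t,\psi)=1$ iff $t:\psi\in\Gamma$.

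The heart of the argument is the Truth Lemma: $\VExt(\psi)=1$ iff $\psi\in\Gamma$, proved by induction on $\psi$. The atomic case holds by definition of $\V$; the $\bot$, $\neg$, and $\vee$ cases follow from consistency together with the deductive closure and negation-completeness of $\Gamma$; and the crucial $t:\psi$ case is immediate from the definition of $\E$, since $\VExt(t:\psi)=\E(t,\psi)=1$ iff $t:\psi\in\Gamma$. With the Truth Lemma in hand I would then verify that $\M$ is genuinely an $\LP_\CS$ model, i.e.\ that $\E$ satisfies all five conditions of Definition~\ref{def: LP-models}. Each verification turns a membership fact about $\Gamma$ into another by applying the matching axiom and MP inside $\Gamma$: e.g.\ $s:(\phi\limplies\psi),t:\phi\in\Gamma$ yields $(s\tapp t):\psi\in\Gamma$ via the Appl axiom (Appl condition); $t:\phi\in\Gamma$ yields $\phi\in\Gamma$, hence $\VExt(\phi)=1$ by the Truth Lemma (jT condition); and the CS condition holds because every $\justConsThree:\phi\in\CS$ is a theorem, hence lies in $\Gamma$. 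Since $\neg\phi\in\Gamma$ gives $\VExt(\phi)=0$, the model witnesses $\not\models_{\LP_\CS}\phi$.

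The main obstacle I anticipate is not any single step but the bookkeeping needed to make the single-world canonical construction legitimate: one must confirm that maximal consistent sets are deductively closed and negation-complete in the present Hilbert system (so that the propositional cases of the Truth Lemma go through), and, most delicately, that reading $\E$ directly off $\Gamma$ automatically satisfies the evidence conditions. This works only because each such condition corresponds to an available axiom plus MP, so there is no need to separately \emph{close} the evidence function as one would when building Fitting-style multi-world models; the monotonicity built into the $\min/\max$ inequalities is exactly what the axioms Appl, Sum, jT, and j4 supply.
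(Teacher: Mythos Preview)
Your proposal is correct and is exactly the standard single-world canonical model argument for Mkrtychev semantics. The paper itself does not supply a proof of this theorem at all: it simply cites \cite{Mkr97LFCS} and \cite{KuzStu12AiML}, and the construction you outline---extend $\{\neg\phi\}$ to a maximal consistent set $\Gamma$, read $\V$ and $\E$ directly off $\Gamma$, prove the Truth Lemma by induction (the $t{:}\psi$ case being immediate from the definition of $\E$), and then verify the evidence conditions using the matching axioms and the Truth Lemma for jT---is precisely the argument found in those sources. Your observation that no separate closure of $\E$ is needed, because each inequality on $\E$ is witnessed by an axiom instance available inside $\Gamma$, is the right diagnosis of why the one-world construction succeeds.
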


We give the preliminary definitions of Boolean, modal, and regular algebras in the next section. Then, in Section \ref{sec: LP-CS algebras} we present algebraic semantics for $\LP$. 
\section{Regular algebras}\label{sec: Regular alg}

Throughout the paper we only consider classical logics, and so we deal with Boolean algebras (see \cite{Ono-Book-2019} for a more detailed exposition). A Boolean algebra $(A, \AbotElement, \Anot, \Ajoin )$ (where the constant $\AbotElement$ is the least element and the operators $\Anot$ and $\Ajoin$ give the complement of an element and join of two elements respectively) is a structure in which the following laws hold (here $\AtopElement := \Anot \AbotElement$ is the greatest element, and $\AElementOne \Ameet \AElementTwo := \Anot((\Anot \AElementOne) \Ajoin (\Anot \AElementTwo))$ gives the meet of two elements): commutative and associative laws for join $\Ajoin$ and meet $\Ameet$, distributive laws both for meet over join and for join over meet, and the following special laws:
\begin{eqnarray*}
\AElementOne \Ajoin \AbotElement &= \AElementOne, \\\qquad
\AElementOne \Ameet \AtopElement &= \AElementOne, \\
\AElementOne \Ajoin (\Anot \AElementOne) &= \AtopElement, \\\qquad
\AElementOne \Ameet (\Anot \AElementOne) &=  \AbotElement.
\end{eqnarray*}
The implication is defined as $\AElementOne \Aimplise \AElementTwo := (\Anot \AElementOne) \Ajoin \AElementTwo$. The order $\leq$ on a Boolean algebra is defined as follows:
\begin{center}
	$\AElementOne \leq \AElementTwo$ iff $\AElementOne \Aimplise \AElementTwo = \AtopElement$ (iff $\AElementOne \Ajoin \AElementTwo = \AElementTwo $ iff $\AElementOne \Ameet \AElementTwo  = \AElementOne$).
\end{center}
Since the logic of proofs $\LP$ is a justification counterpart of modal logic $\Logic{S4}$, let us recall $\Logic{S4}$ algebras first. An $\Logic{S4}$  algebra (or an interior algebra) is a tuple $(A, \AbotElement, \Anot, \Ajoin ,  \Box)$
such that $(A, \AbotElement, \Anot, \Ajoin)$ is a Boolean algebra, and the operator $\Box : A \to A$  satisfies the following equations:
\begin{itemize}
	\item $\Box(\AtopElement) = \AtopElement$,
	
	\item $\Box(\AElementOne \Ameet \AElementTwo ) = \Box \AElementOne \Ameet \Box \AElementTwo $,
	
	\item $\Box \Box \AElementOne = \Box \AElementOne$,
	
	\item $\Box \AElementOne \Ameet \AElementOne = \Box \AElementOne$.
\end{itemize}
The completeness proof of modal logics with respect to modal algebras follows from the fact that  the regularity rule is admissible in all normal modal logics (cf. \cite{Chagrov-Zakharyaschev-1997,Blackburn-Rijke-Venema-2001}):

\[\frac{\phi \leftrightarrow \psi}{\Box \phi \leftrightarrow \Box \psi}\ Reg\]

Likewise we extend the logic of proofs with a justification counterpart of this rule. Let
${\sf HLP}_\emptyset$ be an extension of $\LP_\emptyset$ by the following justification regularity rule:

\[\frac{\phi \leftrightarrow \psi}{t:\phi \leftrightarrow t:\psi}\ JReg\]

A justification logic in which the justification regularity rule $JReg$ is admissible is called \textit{regular}. 

\begin{definition}[${\sf HLP}_\emptyset$ algebra]
	An ${\sf HLP}_\emptyset$ algebra is a tuple $\Algebra = (A, \AbotElement, \Anot, \Ajoin,  \OBox{t} )_{t \in \Terms}$ such that $(A, \AbotElement, \Anot, \Ajoin)$ is a Boolean algebra with operators $\OBox{t} : A \to A$  satisfying the following conditions. For all $\AElementOne, \AElementTwo  \in A$ and all $s,t \in \Terms$:
	\begin{description}
		\item[A-Appl.] $\OBox{s}(\AElementOne \Aimplise \AElementTwo ) \Ameet \OBox{t}(\AElementOne) \leq \OBox{s \tapp t}(\AElementTwo )$,
		
		\item[A-Sum.] $\OBox{s}(\AElementOne) \Ajoin \OBox{t}(\AElementOne) \leq \OBox{s \ttsum  t}(\AElementOne)$,
		
		\item[A-jT.] $\OBox{t}(\AElementOne) \leq \AElementOne$,
		
		\item[A-j4.] $\OBox{t}(\AElementOne) \leq \OBox{!t} (\OBox{t}(\AElementOne))$,
		
	\end{description}
	A regular algebra is a tuple $\Algebra = (A, \AbotElement, \Anot, \Ajoin,  \OBox{t} )_{t \in \Terms}$ such that $(A, \AbotElement, \Anot, \Ajoin)$ is a Boolean algebra with operators $\OBox{t} : A \to A$  satisfying conditions A-Appl and A-Sum.\footnote{Note that every ${\sf HLP}_\emptyset$ algebra is a regular algebra.}

\end{definition}

\begin{definition}[Valuation] \label{def:valuation}
	Let $\Algebra$ be a regular algebra. A valuation on $\Algebra$ is a function $\Val : \Prop \to A$. The assignment $\ValExt : \Formulae \to A$ on $\Algebra$ is an extension of $\Val$ defined as follows
	\begin{align*}
		\ValExt(p) &= \Val(p), \\
		\ValExt(\bot) &= \AbotElement, \\
		\ValExt(\neg \phi ) &= \Anot \ValExt(\phi), \\
		\ValExt(\phi \vee \psi) &= \ValExt(\phi) \Ajoin \ValExt(\psi), \\
		\ValExt(t: \phi) &= \OBox{t}(\ValExt(\phi)).
	\end{align*}

\end{definition}

Note that from the above conditions one can obtain the following:
$$	\ValExt(\phi \limplies \psi) = \ValExt(\phi) \Aimplise \ValExt(\psi). $$

Next we give a general definition for validity which will be used throughout the paper.

\begin{definition}[Validity]\label{def:validity}
	Let $\Algebra$ be an algebra and let $\nabla$ be a set of certain \textit{distinguished elements} of $A$. The set $(\Algebra, \nabla)$ is called a matrix. If $\Algebra$ is a regular algebra, then $(\Algebra, \nabla)$ is called a regular matrix. 
	\begin{itemize}
		
		\item A formula $\phi$ is true in the matrix $(\Algebra, \nabla)$, denoted by $(\Algebra, \nabla) \models \phi$, if $\OValExt{\phi} \in \nabla$ for every valuation $\Val$ on $\Algebra$.
		
		\item A formula $\phi$ is valid in a class $\mathcal{C}$ of matrices, denoted by $\mathcal{C} \models \phi$, if $(\Algebra, \nabla) \models \phi$ for every algebra $(\Algebra, \nabla) \in \mathcal{C}$.
		
		\item A logic $\mathsf{L}$ is characterized by a class $\mathcal{C}$ of matrices if for every formula $\phi$: $\vdash_\mathsf{L} \phi$ if{f} $\mathcal{C} \models \phi$.
		
	\end{itemize}
The class of all ${\sf HLP}_\emptyset$ algebras with singleton $\nabla$ is denoted by $\Algebra^N$ (the superscript $N$ denotes the use of singleton $\nabla$ for all algebras in the class).

Notation: We shall often deal with matrices $(\Algebra, \nabla)$ in which $\nabla = \{ \AtopElement_\Algebra \}$, where $\AtopElement_\Algebra$ denotes the unit of $\Algebra$. In this case instead of $(\Algebra, \nabla) \models \phi$ we write $\Algebra \models \phi$. Thus, $\Algebra \models \phi$ means that $\ValExt(\phi) = \AtopElement_\Algebra$, for every valuation $\Val$ on $\Algebra$.
\end{definition}

Now we use the so called Tarski-Lindenbaum algebra to prove  completeness. 

\begin{definition}\label{def: Tarski-Lindenbaum algebra for HLP}
	For $\phi \in \Formulae$, let
	\[ 
	\OCl{\phi} := \{ \psi \mid \vdash _{\HLP_\emptyset}\phi \liff \psi \}.
	\]
	and let
	\[
	\OCl{\Formulae} := \{ \OCl{\phi} \mid \phi \in \Formulae \}.
	\]
	The Tarski-Lindenbaum algebra
		\[
	\Algebra_{\HLP_\emptyset} := ( \OCl{\Formulae}, \AbotElement, \Anot, \Ajoin, \OBox{t} )_{t \in \Terms}
	\]
	 for $\HLP_\emptyset$ is defined as follows:
	\begin{align*}
		\AbotElement  &:= \OCl{\bot}, \\
		\Anot \OCl{\phi}  &:= \OCl{\neg \phi}, \\
		\OCl{\phi} \Ajoin \OCl{\psi} &:= \OCl{\phi \vee \psi}, \\
		\OBox{t}(\OCl{\phi}) &:= \OCl{t:\phi}.
	\end{align*}

	Let $\nabla = \{ \OCl{\top} \}$.
	\end{definition}

Observe that 
\[
\vdash \phi \limplies \psi \quad \Leftrightarrow \quad \OCl{\phi} \leq \OCl{\psi},
\]
and
\[
\OCl{\phi} \Aimplise \OCl{\psi} = \OCl{\phi \limplies \psi}.
\]

\begin{lemma}\label{lem: Tarski-Lindenbaum algebra is an sf HLP algebra}
	The Tarski-Lindenbaum algebra $\Algebra_{\HLP_\emptyset}$ is an ${\sf HLP}_\emptyset$ algebra.
\end{lemma}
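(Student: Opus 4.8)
The plan is to verify that the Tarski-Lindenbaum algebra $\Algebra_{\HLP_\emptyset}$ satisfies all the defining conditions of an ${\sf HLP}_\emptyset$ algebra. This splits into two tasks: first, confirming that the quotient structure $(\OCl{\Formulae}, \AbotElement, \Anot, \Ajoin)$ is a genuine Boolean algebra with the operators $\OBox{t}$ well-defined; second, checking the four inequalities A-Appl, A-Sum, A-jT, and A-j4.

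\smallskip

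First I would address well-definedness, which is where the work really lies. The relation $\phi \sim \psi$ iff $\vdash_{\HLP_\emptyset} \phi \liff \psi$ is an equivalence relation because $\liff$ is provably reflexive, symmetric, and transitive in the propositional base. To see that $\Anot$ and $\Ajoin$ respect this relation is routine from propositional reasoning (replacement of provable equivalents under $\neg$ and $\vee$). The crucial point is the operator $\OBox{t}$: I must show that if $\OCl{\phi} = \OCl{\psi}$, i.e. $\vdash_{\HLP_\emptyset} \phi \liff \psi$, then $\OCl{t:\phi} = \OCl{t:\psi}$, i.e. $\vdash_{\HLP_\emptyset} t:\phi \liff t:\psi$. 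This is exactly the content of the justification regularity rule $JReg$, which is by construction a rule of $\HLP_\emptyset$. Hence $\OBox{t}$ is well-defined precisely because we are working over $\HLP_\emptyset$ rather than over $\LP_\emptyset$ alone. That the Boolean algebra axioms hold on equivalence classes follows from the fact that each such axiom corresponds to a propositional tautology (the laws PL1--PL5 suffice to derive all classical tautologies), together with the two identities already observed in the excerpt, namely $\vdash \phi \limplies \psi \Leftrightarrow \OCl{\phi} \leq \OCl{\psi}$ and $\OCl{\phi} \Aimplise \OCl{\psi} = \OCl{\phi \limplies \psi}$.

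\smallskip

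Next I would verify the four operator conditions, translating each algebraic inequality back into a provability claim via the order characterization $\OCl{\phi} \leq \OCl{\psi}$ iff $\vdash \phi \limplies \psi$. For A-Appl, using $\OCl{\phi} \Aimplise \OCl{\psi} = \OCl{\phi \limplies \psi}$ and $\OBox{t}(\OCl{\phi}) = \OCl{t:\phi}$, the inequality $\OBox{s}(\OCl{\phi} \Aimplise \OCl{\psi}) \Ameet \OBox{t}(\OCl{\phi}) \leq \OBox{s \tapp t}(\OCl{\psi})$ unfolds to $\vdash s:(\phi \limplies \psi) \Ameet t:\phi \limplies (s \tapp t):\psi$, which is exactly axiom Appl. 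Similarly A-Sum reduces to axiom Sum, A-jT to axiom jT, and A-j4 to axiom j4. Each reduction is a direct application of the definitions of $\Ameet$, $\Ajoin$, and the order, so these are short.

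\smallskip

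I expect the main obstacle to be purely the well-definedness of $\OBox{t}$, since this is the one place where the passage from syntax to algebra could break down and is the entire motivation for introducing $\HLP_\emptyset$ with the rule $JReg$; the remaining verifications are mechanical once the dictionary between $\leq$ and $\limplies$ is in place. No genuine difficulty is anticipated beyond carefully invoking $JReg$ at the right moment.
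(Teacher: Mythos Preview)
Your proposal is correct and follows essentially the same approach as the paper: the paper's proof singles out the well-definedness of $\OBox{t}$ via the rule $JReg$ as the key point and declares the remaining verifications straightforward, which is exactly what you do (indeed, you give more detail on the A-Appl, A-Sum, A-jT, A-j4 checks than the paper does).
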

\begin{proof}
We show that the operators $\Box_{t}$, for $t \in \Terms$, are well-defined. Checking the rest of the conditions is straightforward.

Let $t \in \Terms$ be a fixed term. If $\OCl{\phi} = \OCl{\psi}$, then $\vdash _{\HLP_\emptyset}\phi \liff \psi$. By the rule $JReg$, $\vdash _{\HLP_\emptyset} t:\phi \liff t:\psi$, and hence  $\OCl{t:\phi} = \OCl{t:\psi}$. Therefore, $\OBox{t}(\OCl{\phi}) = \OBox{t}(\OCl{\psi})$. \qed
\end{proof}

\begin{theorem}[Soundness and completeness]\label{thm: completeness HLP}
	$\vdash_{\HLP_\emptyset} \phi$ iff $\Algebra^N \models \phi$.
\end{theorem}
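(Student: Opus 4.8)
The plan is to prove soundness and completeness separately, using the Tarski-Lindenbaum algebra $\Algebra_{\HLP_\emptyset}$ constructed above together with Lemma~\ref{lem: Tarski-Lindenbaum algebra is an sf HLP algebra}.

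For soundness (the left-to-right direction), I would argue by induction on the length of a derivation of $\phi$ in $\HLP_\emptyset$, showing that $\Algebra^N \models \phi$, i.e.\ that $\OValExt{\phi} = \AtopElement_\Algebra$ for every $\HLP_\emptyset$ algebra $\Algebra$ and every valuation $\Val$. The base cases are the axioms: the propositional axioms PL1--PL5 hold because every $\HLP_\emptyset$ algebra has a Boolean reduct, so these reduce to standard Boolean identities evaluating to $\AtopElement$; the axioms Appl, Sum, jT, j4 translate under $\ValExt$ directly into the defining inequalities A-Appl, A-Sum, A-jT, A-j4 of an $\HLP_\emptyset$ algebra (using $\ValExt(t{:}\phi) = \OBox{t}(\ValExt(\phi))$ and $\ValExt(\phi \limplies \psi) = \ValExt(\phi) \Aimplise \ValExt(\psi)$), recalling that $x \leq y$ is equivalent to $x \Aimplise y = \AtopElement$. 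For the rules, I must check that MP and $JReg$ preserve the property of evaluating to $\AtopElement$: if $\OValExt{\phi} = \AtopElement$ and $\OValExt{\phi \limplies \psi} = \AtopElement$ then $\OValExt{\psi} = \AtopElement$ by the Boolean order facts, and similarly if $\OValExt{\phi \liff \psi} = \AtopElement$ then $\ValExt(\phi) = \ValExt(\psi)$, whence $\OBox{t}$ applied to both sides gives $\OValExt{t{:}\phi \liff t{:}\psi} = \AtopElement$.

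For completeness (the right-to-left direction), I would use the contrapositive via the canonical model. Suppose $\not\vdash_{\HLP_\emptyset} \phi$. By Lemma~\ref{lem: Tarski-Lindenbaum algebra is an sf HLP algebra}, $\Algebra_{\HLP_\emptyset}$ is an $\HLP_\emptyset$ algebra, and with $\nabla = \{\OCl{\top}\}$ the matrix $(\Algebra_{\HLP_\emptyset}, \nabla)$ lies in the class $\Algebra^N$. Define the canonical valuation $\Val(p) := \OCl{p}$. The key lemma is the \emph{truth lemma}, $\ValExt(\psi) = \OCl{\psi}$ for every formula $\psi$, proved by induction on $\psi$ using the defining clauses of the Tarski-Lindenbaum operations together with the matching clauses in Definition~\ref{def:valuation}; the modal clause $\ValExt(t{:}\psi) = \OBox{t}(\ValExt(\psi)) = \OBox{t}(\OCl{\psi}) = \OCl{t{:}\psi}$ is exactly where the well-definedness of $\OBox{t}$ from the previous lemma is needed. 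Since $\not\vdash_{\HLP_\emptyset} \phi$ and hence $\not\vdash_{\HLP_\emptyset} \top \liff \phi$, we have $\OCl{\phi} \neq \OCl{\top}$, so $\ValExt(\phi) = \OCl{\phi} \notin \nabla$, giving $(\Algebra_{\HLP_\emptyset}, \nabla) \not\models \phi$ and therefore $\Algebra^N \not\models \phi$.

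The main obstacle is not any single hard computation but rather the bookkeeping needed to align the algebraic inequalities with the logical axioms cleanly; in particular, verifying that the defining conditions A-Appl, A-Sum, A-jT, A-j4 correctly capture the corresponding axioms requires care in rewriting each axiom as an identity $\OValExt{\cdot} = \AtopElement$ and then translating the resulting equation into the order-theoretic form $x \leq y$. The genuinely delicate point, already isolated in Lemma~\ref{lem: Tarski-Lindenbaum algebra is an sf HLP algebra}, is that $\OBox{t}$ is well-defined, which is precisely why $\HLP_\emptyset$ (rather than $\LP_\emptyset$) is the logic under consideration: the rule $JReg$ is exactly what makes the modal clause of the truth lemma go through, and without it the canonical construction would fail.
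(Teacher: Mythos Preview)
Your proposal is correct and follows essentially the same approach as the paper: the paper dismisses soundness as ``straightforward'' (you spell out the induction on derivations), and for completeness both you and the paper use the canonical valuation $\Val(p) := \OCl{p}$ on the Tarski--Lindenbaum algebra, prove the Truth Lemma $\OValExt{\psi} = \OCl{\psi}$ by induction, and conclude that $\not\vdash_{\HLP_\emptyset}\phi$ implies $\OValExt{\phi} = \OCl{\phi} \notin \nabla$.
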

\begin{proof}
	Soundness is straightforward. For completeness define the valuation $\Val$ as follows
	\[
	\OVal{p} := \OCl{p}, \qquad \text{ for } p \in \Prop.
	\]
Now it is easy to prove the Truth Lemma: for every formula $\phi$ 
		\[
		\OValExt{\phi} = \OCl{\phi}.
		\]
The proof is by induction on the complexity of $\phi$. The proof of the case $\phi = t:\psi$ follows from Definition \ref{def:valuation} and Definition \ref{def: Tarski-Lindenbaum algebra for HLP}, and the proof of other cases are standard.

Finally, completeness follows easily from the Truth Lemma. If $\not \vdash_{\HLP_\emptyset} \phi,$ then $\ValExt(\phi)=\OCl{\phi} \not \in \nabla = \{ \OCl{\top} \}$, and hence $(\Algebra_{\HLP_\emptyset},\nabla) \not \models \phi$. 		
\qed

\end{proof}

 Theorem~\ref{thm: completeness HLP} shows that ${\sf HLP}_\emptyset$ is characterized by the class $\Algebra^N$, i.e. the class of all ${\sf HLP}_\emptyset$ matrices with singleton $\nabla.$ 
  
\begin{theorem}\label{thm: characterization of singleton nabla}
	If a justification logic $\JL$ is characterized by a class of regular matrices $\mathcal{C}$ with singleton $\nabla$ then the rule $JReg$ is admissible in $\JL$.
\end{theorem}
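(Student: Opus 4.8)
The plan is to use the semantic characterization in both directions: to turn the derivability of $\phi \liff \psi$ into an algebraic identity, to push that identity through the operators $\OBox{t}$, and then to turn the result back into a derivation. The first thing I would establish is that the singleton hypothesis on $\nabla$ is not merely a convenience but forces $\nabla = \{\AtopElement_\Algebra\}$ for every matrix $(\Algebra, \nabla) \in \mathcal{C}$. Since $\JL$ contains classical propositional logic, any propositional tautology $\chi$ (for instance $p \limplies p$) is a theorem of $\JL$, and in any regular algebra --- whose reduct is a Boolean algebra --- one has $\OValExt{\chi} = \AtopElement_\Algebra$ for every valuation $\Val$. By the characterization $(\Algebra, \nabla) \models \chi$, so $\AtopElement_\Algebra \in \nabla$; as $\nabla$ is a singleton this gives $\nabla = \{\AtopElement_\Algebra\}$.

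Next, assuming $\vdash_{\JL} \phi \liff \psi$, I would apply the characterization to obtain $\mathcal{C} \models \phi \liff \psi$. Fix any $(\Algebra, \nabla) \in \mathcal{C}$ and any valuation $\Val$. By the previous paragraph $\OValExt{\phi \liff \psi} = \AtopElement_\Algebra$, and since in a Boolean algebra $(a \Aimplise b) \Ameet (b \Aimplise a) = \AtopElement_\Algebra$ holds iff $a = b$, it follows that $\OValExt{\phi} = \OValExt{\psi}$.

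Now the key step: because $\OBox{t}$ is a genuine function $A \to A$, applying it to the equal elements $\OValExt{\phi} = \OValExt{\psi}$ yields $\OValExt{t:\phi} = \OBox{t}(\OValExt{\phi}) = \OBox{t}(\OValExt{\psi}) = \OValExt{t:\psi}$. Hence $\OValExt{t:\phi \liff t:\psi} = \AtopElement_\Algebra \in \nabla$. As the matrix and the valuation were arbitrary, $\mathcal{C} \models t:\phi \liff t:\psi$, and the characterization delivers $\vdash_{\JL} t:\phi \liff t:\psi$. This is exactly the admissibility of $JReg$.

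The routine ingredients are the Boolean-algebra facts about the biconditional being $\AtopElement_\Algebra$ precisely when its two arguments coincide; the conceptual crux is the opening observation that a singleton $\nabla$ must equal $\{\AtopElement_\Algebra\}$, because without it the validity of $\phi \liff \psi$ would not translate into the equality $\OValExt{\phi} = \OValExt{\psi}$ needed to exploit that $\OBox{t}$ is well defined. I expect this normalization of $\nabla$ to be the main point requiring care; everything after it is the trivial fact that functions respect equality.
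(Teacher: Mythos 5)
Your proof is correct and follows essentially the same route as the paper's: use the characterization to get $\mathcal{C} \models \phi \liff \psi$, deduce $\OValExt{\phi} = \OValExt{\psi}$, apply the function $\OBox{t}$ to these equal elements, and translate back through the characterization. The one difference is that you explicitly justify the normalization $\nabla = \{\AtopElement_\Algebra\}$ (via validity of tautologies and the Boolean fact that a biconditional equals $\AtopElement_\Algebra$ iff its arguments coincide), a step the paper's proof uses implicitly when passing from $\mathcal{C} \models \phi \liff \psi$ to $\OValExt{\phi} = \OValExt{\psi}$; making it explicit is a genuine improvement in rigor, not a departure in method.
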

\begin{proof}
	Suppose that $\vdash_\JL \phi \liff \psi$. Then, $\mathcal{C} \models \phi \liff \psi$. Thus, for every $\Algebra \in \mathcal{C}$ and for every valuation $\Val$ on $\Algebra$, we have $\OValExt{\phi} = \OValExt{\psi}$. Thus, $\OBox{t}(\OValExt{\phi}) = \OBox{t}(\OValExt{\psi})$, for every $\Val$. Hence, $\OValExt{t:\phi} = \OValExt{t:\psi}$, for every $\Val$. Thus, $\mathcal{C} \models t:\phi \liff t:\psi$, and hence $\vdash_\JL t:\phi \liff t:\psi$. \qed
\end{proof}

Let us consider  the rule $JReg$. This rule says that if $\phi \liff \psi$ is a theorem, then every proof $t$ of $\phi$ is a proof of $\psi$ and vise versa. For instance, let $\phi$ be $p \limplies p$, and $\psi$ be $\bot \limplies q$. We know that $\phi \liff \psi$ is a theorem of propositional logic. It should be obvious that in this example, a proof of $\phi$ could be different from that of $\psi$ and it is not the case that every proof of $\phi$ is a proof of $\psi$ and vise versa. For example, both $(\justConsOne \tapp \justConsTwo) \tapp \justConsThree: (p \limplies p)$ (see Example \ref{ex: example of internalization}) and $\mathsf{d}:(\bot \limplies q)$, for some $\mathsf{d} \in \CTerms$, are provable in $\LP_\mathsf{TCS}$ but $(\justConsOne \tapp \justConsTwo) \tapp \justConsThree$ and $\mathsf{d}$ are different proof terms, and one can show that $(\justConsOne \tapp \justConsTwo) \tapp \justConsThree: (\bot \limplies q)$ is not a theorem of $\LP_\mathsf{TCS}$.\footnote{Even since $\phi$ is a theorem of paraconsistent logics but $\psi$ is not, it seems that the `informal justifications' of $\phi$ and $\psi$ are different too.} 

It is noteworthy that a mono-agent version of $JReg$ is used in the axiomatic formulation of some of the relevant justification logics of \cite{Standefe-IGPL-2019}. One might argue that for an equivalence to be valid in a relevant logic there must be some connection between the two equivalent propositions, and thus $JReg$ is expected to be admissible in this setting. Nonetheless, it is not still clear why two equivalent propositions should be known for the same reason. It seems that an argument in favor of $JReg$ in a relevant logic depends on a sensible notion of `connection', and thus this issue has yet to be investigated more.

Since $JReg$ is not admissible in the standard justification logics, in order to present algebraic semantics for the logic of proofs $\LP$ we consider two possibilities: 
\begin{enumerate}
	\item $\nabla$ is not singleton. 
	
	\item  Replacing the operators $\OBox{t} : A \to A$ with an alternate.    
\end{enumerate}

In the rest of this section we consider the first possibility. We show that the logic of proofs $\LP_\emptyset$ is characterized by the class of all ${\sf HLP}_\emptyset$ matrices when $\nabla$ is not singleton. This class is denoted by $\Algebra^{\sf Inf}$.

\begin{definition}\label{def: Tarski-Lindenbaum algebra with non-singleton nabla}
	The Tarski-Lindenbaum algebra
	\[
	\Algebra_{\LP_\emptyset}^{\sf Inf} := ( \OCl{\Formulae}, \AbotElement, \Anot, \Ajoin,  \OBox{t} )_{t \in \Terms},
	\]
	%
	%
	is defined similar to Definition \ref{def: Tarski-Lindenbaum algebra for HLP} with the difference that now the class of a formula is defined as follows:
	\[
	\OCl{\phi} := \{ \psi \mid (\forall n \geq 0) (\forall s_1, \ldots, s_n \in \Terms) \vdash_{\LP_\emptyset} s_n : \cdots : s_1 : \phi \liff s_n : \cdots : s_1 : \psi \}.
	\]
%
%
	Let $\nabla' =  \{ \OCl{\phi} \mid \vdash_{\LP_\emptyset} \phi \}$.
\end{definition}


\begin{lemma}
	The Tarski-Lindenbaum algebra $\Algebra_{\LP_\emptyset}^{\sf Inf}$ is an ${\sf HLP}_\emptyset$ algebra.
\end{lemma}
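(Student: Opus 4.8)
The plan is to verify the three defining ingredients of an ${\sf HLP}_\emptyset$ algebra for $\Algebra_{\LP_\emptyset}^{\sf Inf}$: that the operations $\Anot,\Ajoin$ and the family $\OBox{t}$ are well defined on the classes $\OCl{\cdot}$, that $(\OCl{\Formulae},\AbotElement,\Anot,\Ajoin)$ is a Boolean algebra, and that the operators satisfy A-Appl, A-Sum, A-jT and A-j4. Writing $\phi\sim\psi$ for the relation defining $\OCl{\cdot}$ (so $\phi\sim\psi$ iff $\vdash_{\LP_\emptyset} s_n:\cdots:s_1:\phi\liff s_n:\cdots:s_1:\psi$ for every $n\geq 0$ and all $s_1,\dots,s_n$), the entire content is that $\sim$ is a congruence for all three kinds of operation; once this is in hand, the Boolean laws and the operator inequalities transfer from the corresponding $\LP_\emptyset$-theorems essentially as in Lemma~\ref{lem: Tarski-Lindenbaum algebra is an sf HLP algebra}.

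The case of $\OBox{t}$ is immediate and is exactly what the relation was engineered for, so I would record it first. If $\phi\sim\psi$ and $s_1,\dots,s_n$ are arbitrary, then $\sim$ covers all towers, in particular the tower whose innermost term is $t$ followed by the outer terms $s_1,\dots,s_n$; this gives $\vdash_{\LP_\emptyset} s_n:\cdots:s_1:t:\phi\liff s_n:\cdots:s_1:t:\psi$. As $n$ and the $s_i$ were arbitrary, $t:\phi\sim t:\psi$, so $\OBox{t}(\OCl{\phi})=\OCl{t:\phi}$ does not depend on the representative.

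The main obstacle is the well-definedness of the Boolean operations, i.e. showing $\phi\sim\psi\Rightarrow\neg\phi\sim\neg\psi$ and $(\phi\sim\phi',\ \psi\sim\psi')\Rightarrow\phi\vee\psi\sim\phi'\vee\psi'$, together with the Boolean-algebra identities (for example $\phi\vee\bot\sim\phi$) that make the quotient a Boolean algebra. At tower height $n=0$ these are pure propositional facts and are routine. The difficulty lies at heights $n\geq 1$: the defining condition for $\neg\phi\sim\neg\psi$ speaks about towers \emph{over} $\neg\phi$ and $\neg\psi$, whereas the hypothesis $\phi\sim\psi$ only controls towers over $\phi$ and $\psi$, and a single justification prefix $s:(\cdot)$ is \emph{not} a congruence in $\LP_\emptyset$ precisely because $JReg$ is not admissible there. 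I would attempt an induction on $n$ that peels off the outermost justification and tries to reduce to the propositional layer, using the completeness of Mkrtychev models (Theorem~\ref{thm: Completeness LP models}) to test each required equivalence semantically. I expect essentially all the work of the lemma to concentrate here, and this is the step I would scrutinize most carefully: one must check that the interaction of $\neg$ and $\vee$ with arbitrary justification prefixes is tame enough for $\sim$ to remain a congruence and for the Boolean identities to survive, exactly in the regime where $JReg$ fails and where a naive reduction to provable equivalence is unavailable.

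Finally, granting the Boolean structure and the order characterization ``$\vdash_{\LP_\emptyset}\phi\limplies\psi$ iff $\OCl{\phi}\leq\OCl{\psi}$'' in the style of Definition~\ref{def: Tarski-Lindenbaum algebra for HLP}, the four operator conditions are read off from the axioms of $\LP_\emptyset$: A-Appl, A-Sum, A-jT and A-j4 are the algebraic forms of Appl, Sum, jT and j4, each an inequality between classes that corresponds to an $\LP_\emptyset$-provable implication (e.g. A-Appl corresponds to $s:(\phi\limplies\psi)\Ameet t:\phi\limplies (s\tapp t):\psi$) and so holds in the quotient just as in the $\Algebra_{\HLP_\emptyset}$ case. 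This part mirrors the verification for $\Algebra_{\HLP_\emptyset}$ and I would treat it as routine.
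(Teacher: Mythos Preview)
Your argument for the well-definedness of $\OBox{t}$ is exactly the paper's: the paper verifies only this operator and declares the remaining conditions ``straightforward''. Where you go beyond the paper---flagging the Boolean layer as the real obstacle---is precisely where you are right to be uneasy.

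In fact your suspicion is confirmed: $\sim$ is \emph{not} a congruence for $\neg$, so the induction you sketch cannot succeed. Take $\phi=\bot$ and $\psi=\neg\neg\bot$. For every tower $s_1,\ldots,s_n$ with $n\ge 1$, axiom jT gives $\vdash_{\LP_\emptyset} s_n{:}\cdots{:}s_1{:}\bot\to\bot$ and likewise with $\neg\neg\bot$ in place of $\bot$, so both towers are refutable and hence provably equivalent; together with the $n=0$ case this yields $\OCl{\bot}=\OCl{\neg\neg\bot}$. But $\Anot$ sends these to $\OCl{\neg\bot}$ and $\OCl{\neg\neg\neg\bot}$, which differ: for a variable $\justVarOne$, the Mkrtychev model generated from the single seed $\E(\justVarOne,\neg\bot)=1$ (closed minimally under Appl, Sum and j4) satisfies jT and has $\E(\justVarOne,\neg\neg\neg\bot)=0$, so by Theorem~\ref{thm: Completeness LP models} we get $\not\vdash_{\LP_\emptyset}\justVarOne{:}\neg\bot\liff\justVarOne{:}\neg\neg\neg\bot$. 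The same mechanism also blocks the Boolean identities you mention (e.g.\ $p\vee\bot\not\sim p$, already at height~$1$). So the step you singled out is not merely the hard part---it is a genuine gap that the paper's ``straightforward'' does not close.
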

\begin{proof}
We show that the operators $\Box_{t}$, for $t \in \Terms$, are well-defined. Checking the rest of the conditions is straightforward.

Let $t \in \Terms$ be a fixed term. If $\OCl{\phi} = \OCl{\psi}$, then by the definition of the class of formulas in Definition \ref{def: Tarski-Lindenbaum algebra with non-singleton nabla} we get for all $n \geq 0$ and for all $s_1, \ldots, s_n \in \Terms$:
\[
\vdash_{\LP_\emptyset} s_n : \cdots : s_1 : t: \phi \liff s_n : \cdots : s_1 : t: \psi.
\]

Hence  $\OCl{t:\phi} = \OCl{t:\psi}$. Therefore, $\OBox{t}(\OCl{\phi}) = \OBox{t}(\OCl{\psi})$. \qed
\end{proof}

\begin{theorem}[Soundness and completeness]\label{thm: completeness regular algebra with infinite nabla}
	$\vdash_{\LP_\emptyset} \phi$ iff $\Algebra^{\sf Inf} \models \phi$.
\end{theorem}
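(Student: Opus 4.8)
The plan is to follow the pattern of Theorem~\ref{thm: completeness HLP}, but with the coarse quotient and singleton $\nabla$ replaced by the refined classes of Definition~\ref{def: Tarski-Lindenbaum algebra with non-singleton nabla} and the non-singleton distinguished set $\nabla'$. For the soundness direction I would check that every axiom instance of $\LP_\emptyset$ is true in each matrix $(\Algebra,\nabla)\in\Algebra^{\sf Inf}$ and that modus ponens preserves truth. Using that each distinguished set $\nabla$ in $\Algebra^{\sf Inf}$ is a proper filter, $\nabla$ contains $\AtopElement$, is upward closed, and satisfies $a,\,a\Aimplise b\in\nabla \Rightarrow b\in\nabla$ (because $a\Ameet(a\Aimplise b)\leq b$), so MP is sound. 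The propositional axioms evaluate to $\AtopElement$ by the Boolean identities, and each justification axiom reduces to one of the algebra conditions through the equivalence $x\Ameet y\leq z \iff x\leq y\Aimplise z \iff (x\Aimplise(y\Aimplise z))=\AtopElement$: Appl is A-Appl, Sum is A-Sum, and jT, j4 are A-jT, A-j4. Hence every theorem lands in $\nabla$, giving $\Algebra^{\sf Inf}\models\phi$ whenever $\vdash_{\LP_\emptyset}\phi$.

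For completeness I would use the canonical matrix $(\Algebra_{\LP_\emptyset}^{\sf Inf},\nabla')$, which is an ${\sf HLP}_\emptyset$ algebra by the preceding lemma. Put $\OVal{p}:=\OCl{p}$ and prove the Truth Lemma $\OValExt{\phi}=\OCl{\phi}$ by induction on $\phi$; all clauses are immediate from Definition~\ref{def:valuation} and the operations of Definition~\ref{def: Tarski-Lindenbaum algebra with non-singleton nabla}, the only noteworthy one being $\OValExt{t:\psi}=\OBox{t}(\OValExt{\psi})=\OBox{t}(\OCl{\psi})=\OCl{t:\psi}$. The remaining ingredient is the identity
\[
\OCl{\phi}\in\nabla' \quad\Longleftrightarrow\quad \vdash_{\LP_\emptyset}\phi ,
\]
whose $\Leftarrow$ direction is immediate and whose $\Rightarrow$ direction follows by instantiating the membership $\psi\in\OCl{\phi}$ at $n=0$, yielding $\vdash_{\LP_\emptyset}\phi\liff\psi$ for some theorem $\psi$, hence $\vdash_{\LP_\emptyset}\phi$. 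Consequently, if $\not\vdash_{\LP_\emptyset}\phi$ then $\OValExt{\phi}=\OCl{\phi}\notin\nabla'$, so $(\Algebra_{\LP_\emptyset}^{\sf Inf},\nabla')\not\models\phi$ and therefore $\Algebra^{\sf Inf}\not\models\phi$, which is the contrapositive of completeness.

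The step I expect to be the crux is not the routine induction but the well-definedness of the quotient underlying $\Algebra_{\LP_\emptyset}^{\sf Inf}$: that the refined relation of Definition~\ref{def: Tarski-Lindenbaum algebra with non-singleton nabla} is a congruence for $\Anot$, $\Ajoin$ and for every $\OBox{t}$, and that the Boolean laws descend to $\OCl{\Formulae}$. The whole point of passing to a non-singleton $\nabla$ is that $JReg$ fails, so $\OBox{t}$ cannot be well-defined on the coarse classes of Definition~\ref{def: Tarski-Lindenbaum algebra for HLP}; the prefix-closed relation is engineered precisely so that $\OCl{\phi}=\OCl{\psi}$ forces $\OCl{t:\phi}=\OCl{t:\psi}$ for each $t$, since every context $s_n:\cdots:s_1:t:(-)$ is itself one of the contexts admitted by the defining condition. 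I would therefore verify this congruence property with care: the congruence for each $\OBox{t}$ is exactly the preceding lemma, whereas the congruence for $\Anot,\Ajoin$ and the descent of the Boolean laws to $\OCl{\Formulae}$ are the remaining and most delicate checks. In parallel I would confirm that $\nabla'$ is a proper filter -- properness coming from the consistency of $\LP_\emptyset$, so that $\OCl{\bot}\notin\nabla'$ -- so that $(\Algebra_{\LP_\emptyset}^{\sf Inf},\nabla')$ is a legitimate member of $\Algebra^{\sf Inf}$ to which the soundness argument also applies.
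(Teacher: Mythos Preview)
Your plan is exactly the paper's plan: its proof is literally ``Similar to the proof of Theorem~\ref{thm: completeness HLP},'' relying on the canonical matrix $(\Algebra_{\LP_\emptyset}^{\sf Inf},\nabla')$ from Definition~\ref{def: Tarski-Lindenbaum algebra with non-singleton nabla}. You go further than the paper in two respects: you supply a soundness argument (assuming each $\nabla$ is a proper filter, which the paper never stipulates for $\Algebra^{\sf Inf}$), and you correctly isolate the congruence property for $\Anot$ and $\Ajoin$ on the refined quotient as the real crux, whereas the paper's preceding lemma waves this off as ``straightforward.''

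The difficulty is that this crux actually fails, so the step you say you ``would verify with care'' cannot be completed. By iterated jT and ex falso, every refutable formula $\phi$ satisfies $\OCl{\phi}=\OCl{\bot}$ in the refined sense: for any $n$ and any $s_1,\dots,s_n$ one has $\vdash s_n{:}\cdots{:}s_1{:}\phi\to\phi\to\bot\to s_n{:}\cdots{:}s_1{:}\bot$, and symmetrically. Take $\phi=\neg(\neg p\vee p)$ and a fresh atom $q$: then $\OCl{\phi}=\OCl{\bot}$, yet $\OCl{\phi\vee q}\neq\OCl{\bot\vee q}$. Indeed $\not\vdash_{\LP_\emptyset}\justVarOne{:}(\bot\vee q)\to\justVarOne{:}(\phi\vee q)$: since $\bot$ is a primitive constant, $\bot\vee q$ is not syntactically of the shape $\neg\alpha\vee\beta$, so one can build an $\LP_\emptyset$ model with $\E(\justVarOne,\bot\vee q)=1$ and $\E(\justVarOne,\phi\vee q)=0$ by closing that single datum under Sum and j4 (Appl never fires, jT holds with $\V(q)=1$, and no closure rule adds anything to $\E(\justVarOne,\cdot)$ for a variable). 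Hence $\Ajoin$ is ill-defined on the refined classes---the same pair shows $\Anot$ fails as well---so $\Algebra_{\LP_\emptyset}^{\sf Inf}$ as described is not a Boolean algebra, and both your argument and the paper's stop here. The paper's own remark after the theorem, that it ``conveys nothing but the fact that $\LP_\emptyset$ is closed under substitution,'' points to the salvageable content: general matrix theory supplies a canonical matrix via the \emph{largest} congruence contained in provable equivalence, but the explicit relation of Definition~\ref{def: Tarski-Lindenbaum algebra with non-singleton nabla} is not that congruence.
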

\begin{proof}
Similar to the proof of Theorem~\ref{thm: completeness HLP}. Just note that if $\not \vdash_{\LP_\emptyset} \phi,$ then $\ValExt(\phi)=\OCl{\phi} \not \in \nabla'$, and hence $(\Algebra_{\LP_\emptyset}^{\sf Inf},\nabla') \not \models \phi$. \qed
\end{proof}

It is worth nothing that considering $\nabla'$ as the set of all theorems is not a viable solution. In fact this theorem conveys nothing but the fact that $\LP_\emptyset$ is closed under substitution (cf. \cite[page 195]{Chagrov-Zakharyaschev-1997}). Hence in the rest of the paper, we study the second case where the operators $\OBox{t}$ on the algebra are replaced by functions on the formulas.
\section{Full $\LP_\CS$ algebras}
\label{sec: LP-CS algebras}

As mention before, the rule $JReg$ is not plausible in justification logics.\ However, since this rule is essential to have well-defined operators in the Tarski-Lindenbaum algebras, in this section (and also the next sections) by considering functions on formulas instead of operators on the algebra we overcome this issue. 

\begin{definition}\label{def: LP-CS algebra}
	A pre $\LP_\CS$ algebra is a tuple $\Algebra = (A, \AbotElement, \Anot, \Ajoin, \OBox{t})_{t \in \Terms}$ such that $(A, \AbotElement, \Anot, \Ajoin)$ is a Boolean algebra, and for each $t \in \Terms$,   $\OBox{t} : \Formulae \to A$ is a function satisfying the following conditions. For all $\phi, \psi \in \Formulae$, all $s,t \in \Terms$, and all $\justConsThree \in \CTerms$:
	\begin{description}
		\item[Al-Appl-$\LP_\CS$.] $\OBox{s}(\phi \limplies \psi) \Ameet \OBox{t}(\phi) \leq \OBox{s \tapp t}(\psi)$,
		
		\item[Al-Sum-$\LP_\CS$.] $\OBox{s}(\phi) \Ajoin \OBox{t}(\phi) \leq \OBox{s \ttsum  t}(\phi)$,
		
		
		\item[Al-j4-$\LP_\CS$.] $\OBox{t}(\phi) \leq \OBox{!t} (\jbox{t} \phi)$,
		
		\item[Al-CS.] $\OBox{\justConsThree}(\phi) = \AtopElement$, for $\justConsThree:\phi \in \CS$.
		
	\end{description}

\end{definition}

Let $T$ be a set.\ In what follows we  call $\Algebra = (A, \AbotElement, \Anot, \Ajoin,  \OBox{t} )_{t \in T}$ an $\LP_\CS$ algebra \textit{over} $T$. One may justify this terminology by considering an $\LP_\CS$ algebra $\Algebra = (A, \AbotElement, \Anot, \Ajoin,  \OBox{t} )_{t \in T}$ as a Boolean algebra $(A, \AbotElement, \Anot, \Ajoin)$  endowed with a family of relations $\OBox{t}$ parametrized by elements $t \in T$. In the next sections, we consider various algebraic structures on the set $T$. 

\begin{definition}\label{def: assignment of full LP-CS algebras}
	Given a valuation  $\Val : \Prop \to A$, the assignment $\ValExt : \Formulae \to A$ on $\Algebra$ is defined as in Definition \ref{def:valuation} with the following difference:
	\begin{align*}
		\ValExt(t: \phi) &= \OBox{t}(\phi).
	\end{align*}
	Validity is defined as in Definition \ref{def:validity}.
\end{definition}

\begin{definition}
	A full $\LP_\CS$ algebra $\Algebra = (A, \AbotElement, \Anot, \Ajoin, \OBox{t})_{t \in \Terms}$ is a pre $\LP_\CS$ algebra that satisfies the following condition. For all $\phi \in \Formulae$, all $t \in \Terms$, and  all valuation $\Val: \Prop \to A$:
	\begin{description}
		\item[Al-jT-$\LP_\CS$.] $\OBox{t}(\phi) \leq \ValExt(\phi)$.
		
	\end{description}
	The class of all full $\LP_\CS$ algebras with singleton $\nabla  = \{\AtopElement\}$ is denoted by $\Algebra_{\LP_\CS}^{\sf full}$.
\end{definition}

\begin{example}\label{ex: full LP algebra of truth values}
	Let $\textbf{2} = (\{ \AbotElement, \AtopElement \}, \AbotElement, \dot{-}, \max)$ be the Boolean algebra of truth values, where $\dot{-}$ is defined by $\dot{-}\AElementOne = \AtopElement - \AElementOne$. Here, the join of two elements is given by maximum ($\max$), and it is easy to show that the meet of two elements is given by minimum ($\min$). Given a constant specification $\CS$ for $\LP$, we construct a full $\LP_\CS$ algebra based on the Boolean algebra ${\bf 2}$. For every $t \in \Terms$, define $\Box_{t}$ by induction on the complexity of $t$ as follows. For every $\phi \in \Formulae$:
	\begin{align*}
		\OBox{\justVarOne} (\phi) &:= 0, \\
		\OBox{\justConsThree} (\phi) &:= 
		\begin{cases}
			1 &\mbox{if $\justConsThree:\phi \in \CS$,}  \\
			0 & \mbox{otherwise. } 
		\end{cases}
		\\
		\OBox{s \tapp t} (\phi) &:= \max \{ \min(\OBox{s}(\psi \limplies \phi), \OBox{t}(\psi)) \mid \psi \in \Formulae  \}, \\
		\OBox{s \ttsum t} (\phi)&:= \max (\OBox{s}(\phi), \OBox{t}(\phi)) \\
		\OBox{\tinspect t} (\phi) &:= 
		\begin{cases}
			\OBox{t}(\psi)  &\mbox{if $\phi = t:\psi$,}  \\
			0 & \mbox{otherwise. } 
		\end{cases}
	\end{align*}
	Let ${\bf 2}_{\LP_\CS} = (\{ \AbotElement, \AtopElement \}, \AbotElement, \dot{-}, \max, \OBox{t} )_{t \in \Terms}$. It is easy to show that ${\bf 2}_{\LP_\CS}$ is a pre $\LP_\CS$ algebra. Now we show that ${\bf 2}_{\LP_\CS}$ is a full $\LP_\CS$ algebra. To this end, by induction on the complexity of the term $t$ we show that:
	\[
	\OBox{t}(\phi) \leq \ValExt(\phi).
	\]
	We only check the case $t = s \tapp r$ (the proof for other cases is simpler). By the induction hypothesis, we have
	\[
	\OBox{s}(\psi \limplies \phi) \leq \ValExt(\psi \limplies \phi) \quad
	\mbox{ and }\quad
	\OBox{t}(\psi) \leq \ValExt(\psi).
	\]
	Thus, 
	\[
	\min(\OBox{s}(\psi \limplies \phi), \OBox{t}(\psi))  \leq
	\min (\ValExt(\psi \limplies \phi), \ValExt(\psi)),
	\]
	and hence 
	\[
	\min(\OBox{s}(\psi \limplies \phi), \OBox{t}(\psi))  \leq \ValExt(\phi).
	\]
	By taking maximum over all formulas $\psi$, we get
	\[
	\OBox{s \tapp t} (\phi) \leq \ValExt(\phi).
	\]
\end{example}

\begin{lemma}
	The regularity rule
	\[\frac{\phi \leftrightarrow \psi}{t:\phi \leftrightarrow t:\psi}\ JReg\]
	is not validity preserving in the class of full $\LP_\CS$ algebras, i.e. there exists $\phi, \psi \in \Formulae$ and $t \in \Terms$ such that $\Algebra_{\LP_\CS}^{\sf full} \models \phi \liff \psi$ but $\Algebra_{\LP_\CS}^{\sf full} \not \models t: \phi \liff t:\psi$.
\end{lemma}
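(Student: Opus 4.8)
The plan is to refute validity-preservation by a single explicit counterexample, taking as the witnessing algebra the concrete full $\LP_\CS$ algebra ${\bf 2}_{\LP_\CS}$ constructed over the two-element Boolean algebra ${\bf 2}$ in Example \ref{ex: full LP algebra of truth values}. I would choose $\phi$ and $\psi$ to be two distinct propositional tautologies, so that the premise $\phi\liff\psi$ of $JReg$ is forced to be valid while leaving room for the conclusion to fail. Echoing the informal discussion preceding the lemma, I take $\phi:=\bot\limplies q$ (an instance of PL5) and $\psi:=p\limplies p$, and I take the term $t:=\justConsThree$ to be a proof constant.

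For the premise, I would check that the value of $\phi\liff\psi$ is pinned to the unit in every member of the class. In an arbitrary Boolean algebra and for any valuation $\Val$ one has $\ValExt(\bot\limplies q)=\Anot\AbotElement\Ajoin\ValExt(q)=\AtopElement$ and $\ValExt(p\limplies p)=\Anot\ValExt(p)\Ajoin\ValExt(p)=\AtopElement$, so $\ValExt(\phi)=\ValExt(\psi)=\AtopElement$ and hence $\ValExt(\phi\liff\psi)=\AtopElement$. As this holds for every valuation on every full $\LP_\CS$ algebra, $\Algebra_{\LP_\CS}^{\sf full}\models\phi\liff\psi$.

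For the conclusion, I would exploit the fact that $\OBox{\justConsThree}$ is governed by $\CS$ alone in ${\bf 2}_{\LP_\CS}$. Choosing (by axiomatic appropriateness of $\CS$, e.g.\ for $\CS=\mathsf{TCS}$) a constant $\justConsThree$ with $\justConsThree:(\bot\limplies q)\in\CS$, the defining clause for constants gives $\OBox{\justConsThree}(\bot\limplies q)=\AtopElement$. Since $p\limplies p$ is not an instance of any axiom scheme, $\justConsThree:(p\limplies p)$ belongs to no constant specification, so the same clause gives $\OBox{\justConsThree}(p\limplies p)=\AbotElement$. Because a biconditional evaluates to the unit exactly when its two sides receive equal values, and here $\OBox{\justConsThree}(\phi)=\AtopElement\neq\AbotElement=\OBox{\justConsThree}(\psi)$, we get $\ValExt(\justConsThree:\phi\liff\justConsThree:\psi)=\AbotElement\neq\AtopElement$. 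Thus ${\bf 2}_{\LP_\CS}\not\models\justConsThree:\phi\liff\justConsThree:\psi$, and since ${\bf 2}_{\LP_\CS}\in\Algebra_{\LP_\CS}^{\sf full}$ by Example \ref{ex: full LP algebra of truth values}, this yields $\Algebra_{\LP_\CS}^{\sf full}\not\models t:\phi\liff t:\psi$.

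The crux is engineering the separation $\OBox{t}(\phi)\neq\OBox{t}(\psi)$, and routing it through a bare constant is what makes the argument effortless: the constant clause decides both values by mere inspection of $\CS$. The main obstacle appears only if one insists on a compound witness such as $t=(\justConsOne\tapp\justConsTwo)\tapp\justConsThree$ from Example \ref{ex: example of internalization}; then one must verify $\OBox{t}(p\limplies p)=\AtopElement$ but $\OBox{t}(\bot\limplies q)=\AbotElement$ by unwinding the recursive clause $\OBox{s\tapp t}(\chi)=\max\{\min(\OBox{s}(\eta\limplies\chi),\OBox{t}(\eta))\mid\eta\in\Formulae\}$, a genuine computation that the constant witness avoids. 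One further point to note is the degenerate case $\CS=\emptyset$, in which every operator of ${\bf 2}_{\LP_\CS}$ is identically $\AbotElement$; there I would either restrict to nonempty $\CS$ (the standard assumption when internalization is intended) or replace ${\bf 2}_{\LP_\CS}$ by another full $\LP_\CS$ algebra separating the two tautologies.
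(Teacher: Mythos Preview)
Your proposal is correct and follows essentially the same approach as the paper: both exhibit the counterexample in the two-element algebra ${\bf 2}_{\LP_\CS}$ of Example~\ref{ex: full LP algebra of truth values}, take $t$ to be a proof constant, and choose $\phi,\psi$ to be two tautologies of which exactly one is an axiom instance, so that the defining clause for $\OBox{\justConsThree}$ forces $\OBox{\justConsThree}(\phi)\neq\OBox{\justConsThree}(\psi)$. The only difference is cosmetic: the paper fixes the singleton $\CS=\{\justConsThree:(p\wedge p\limplies p)\}$ and compares $p\wedge p\limplies p$ against $\top$, while you use an axiomatically appropriate $\CS$ and compare $\bot\limplies q$ against $p\limplies p$.
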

\begin{proof}
	Let $\CS = \{ \justConsThree : (p \wedge p \limplies p) \}$ be a  constant specification for $\LP$. Consider the full $\LP_\CS$ algebra ${\bf 2}_{\LP_\CS}$ from Example \ref{ex: full LP algebra of truth values}.
	It is easy to show that $\Algebra_{\LP_\CS}^{\sf full} \models (p \wedge p \limplies p) \liff \top$. On the other hand, we observe that 
	$$\OBox{c} (p \wedge p \limplies p) = 1 \quad \text{ and } \quad \OBox{c} (\top) = 0.$$
	Thus, given an arbitrary valuation $\theta$, we have $\ValExt (\justConsThree : (p \wedge p \limplies p)) \neq \ValExt (\justConsThree : \top)$. Hence, $\Algebra_{\LP_\CS}^{\sf full} \not \models \justConsThree : (p \wedge p \limplies p) \liff \justConsThree : \top$. \qed
\end{proof}

In what follows, we prove completeness theorem with respect to the classes $\Algebra_{\LP_\CS}^{\sf full}$ and we show a representation theorem for $\LP_\CS$.
\subsection{Completeness of $\LP_\CS$}

Completeness is proved by the standard method of defining the Tarski-Lindenbaum algebra for $\LP_\CS$.

\begin{definition}\label{def: Tarski-Lindenbaum algebra for LP-CS}
	For $\phi \in \Formulae$, let
	\[
	\OCl{\phi} := \{ \psi \mid \vdash_{\LP_\CS} \phi \liff \psi \},
	\]
	and let
	\[
	\OCl{\Formulae} := \{ \OCl{\phi} \mid \phi \in \Formulae \}.
	\]
	The Tarski-Lindenbaum algebra
	\[
	\Algebra_{\LP_\CS}^f := ( \OCl{\Formulae}, \AbotElement, \Anot, \Ajoin, \OBox{t})_{t \in \Terms},
	\]
	for $\LP_\CS$ is defined as follows:
	\begin{align*}
		\AbotElement  &:= \OCl{\bot}, \\
		\Anot \OCl{\phi}  &:= \OCl{\neg \phi}, \\
		\OCl{\phi} \Ajoin \OCl{\psi} &:= \OCl{\phi \vee \psi}, \\
		\OBox{t}(\phi) &:= \OCl{t:\phi}.
	\end{align*}
	Note that $\AtopElement = \OCl{\top}$.
\end{definition}

\begin{lemma}
	The Tarski-Lindenbaum algebra $\Algebra_{\LP_\CS}^f$ is a pre $\LP_\CS$ algebra.
\end{lemma}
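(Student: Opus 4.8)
The plan is to verify that the Tarski-Lindenbaum algebra $\Algebra_{\LP_\CS}^f$ satisfies every clause in the definition of a pre $\LP_\CS$ algebra. The first task is to confirm that $(\OCl{\Formulae}, \AbotElement, \Anot, \Ajoin)$ is a genuine Boolean algebra, which amounts to checking that the operations $\Anot$ and $\Ajoin$ are well-defined on equivalence classes and that the Boolean laws hold. Well-definedness of these two operations is the standard propositional fact: if $\vdash_{\LP_\CS} \phi \liff \phi'$ and $\vdash_{\LP_\CS} \psi \liff \psi'$, then $\vdash_{\LP_\CS} \neg\phi \liff \neg\phi'$ and $\vdash_{\LP_\CS} (\phi \vee \psi) \liff (\phi' \vee \psi')$, both provable using the propositional axioms \textbf{PL1}--\textbf{PL5} and \textbf{MP}. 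The Boolean laws themselves follow because the classical propositional fragment is available in $\LP_\CS$; each equational law corresponds to a propositional theorem of the form $\phi \liff \psi$.

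The crucial observation that makes this lemma easier than Lemma~\ref{lem: Tarski-Lindenbaum algebra is an sf HLP algebra} is that here $\OBox{t}$ is defined as a function $\OBox{t} : \Formulae \to \OCl{\Formulae}$ taking the \emph{formula} $\phi$ to $\OCl{t:\phi}$, rather than as an operator on the carrier $\OCl{\Formulae}$. Consequently there is nothing to prove about well-definedness of $\OBox{t}$: its input is an honest formula, not an equivalence class, so the obstruction that forced us to invoke $JReg$ in the regular-algebra setting simply does not arise. This is precisely the payoff of replacing operators on the algebra by functions on formulas, as announced at the start of the section.

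What remains is to check the four structural conditions, and each reduces to a provability fact in $\LP_\CS$. For \textbf{Al-Appl-$\LP_\CS$} I would use the axiom \textbf{Appl}, namely $s:(\phi \limplies \psi) \limplies (t:\phi \limplies (s\tapp t):\psi)$, together with the identity $\OCl{\phi} \Aimplise \OCl{\psi} = \OCl{\phi \limplies \psi}$ noted after Definition~\ref{def: Tarski-Lindenbaum algebra for HLP} and the order characterization $\vdash \phi \limplies \psi \Leftrightarrow \OCl{\phi} \leq \OCl{\psi}$: the axiom yields $\OCl{s:(\phi\limplies\psi)} \Ameet \OCl{t:\phi} \leq \OCl{(s\tapp t):\psi}$, which unwinds to exactly the required inequality $\OBox{s}(\phi \limplies \psi) \Ameet \OBox{t}(\phi) \leq \OBox{s\tapp t}(\psi)$. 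Similarly \textbf{Al-Sum-$\LP_\CS$} follows from axiom \textbf{Sum}, and \textbf{Al-j4-$\LP_\CS$} follows from axiom \textbf{j4}, $t:\phi \limplies {!t}:t:\phi$, after rewriting $\OCl{!t : (t:\phi)}$ as $\OBox{!t}(t{:}\phi) = \OBox{!t}(\jbox{t}\phi)$. Finally \textbf{Al-CS} holds because $\justConsThree:\phi \in \CS$ is an axiom of $\LP_\CS$, so $\vdash_{\LP_\CS} \justConsThree:\phi$, giving $\OCl{\justConsThree:\phi} = \OCl{\top} = \AtopElement$, i.e. $\OBox{\justConsThree}(\phi) = \AtopElement$.

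I do not expect any genuine obstacle here: every step is a routine translation between a syntactic provability statement and an algebraic inequality via the order characterization. The only point requiring a moment's care is bookkeeping in \textbf{Al-Appl-$\LP_\CS$}, where one must correctly match the Boolean meet with the antecedent of the two-premise implication and apply the definition $\OCl{\phi}\Aimplise\OCl{\psi}=\OCl{\phi\limplies\psi}$ at the right place; this is the closest thing to a subtlety, but it is still entirely mechanical.
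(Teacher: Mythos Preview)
Your proposal is correct and follows essentially the same approach as the paper: the paper's proof simply declares everything ``straightforward'' and displays only the verification of \textbf{Al-CS} (exactly the computation you give), so your write-up is a faithful expansion of what the paper leaves implicit. Your remark that $\OBox{t}$ needs no well-definedness check because it takes formulas rather than equivalence classes as input is precisely the point of the construction and is worth stating explicitly.
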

\begin{proof}
	The proof is straightforward. We only verify the condition $Al$-$CS$. Suppose that $\justConsThree:\phi \in \CS$. Then,
	$
	\OBox{\justConsThree}(\phi) = \OCl{\justConsThree:\phi} = \OCl{\top} = \AtopElement.
	$ \qed
\end{proof}

\begin{theorem}[Soundness and completeness]\label{thm: completeness full LP-CS algebras}
	$\vdash_{\LP_\CS} \phi$ iff $\Algebra_{\LP_\CS}^{\sf full} \models \phi$.
\end{theorem}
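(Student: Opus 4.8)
The plan is to prove soundness and completeness separately, following the template already established by Theorem~\ref{thm: completeness HLP}. For soundness, I would argue that every axiom of $\LP_\CS$ evaluates to $\AtopElement$ under every valuation $\Val$ on every full $\LP_\CS$ algebra, and that the rule MP preserves this property. The propositional axioms PL1--PL5 hold because $(A,\AbotElement,\Anot,\Ajoin)$ is a Boolean algebra and $\ValExt$ respects all the connectives (in particular $\ValExt(\phi\limplies\psi)=\ValExt(\phi)\Aimplise\ValExt(\psi)$, as noted after Definition~\ref{def:valuation}). The justification axioms translate directly into the defining conditions of a pre $\LP_\CS$ algebra: Appl follows from Al-Appl-$\LP_\CS$ together with $\ValExt(s:(\phi\limplies\psi))=\OBox{s}(\phi\limplies\psi)$, Sum from Al-Sum-$\LP_\CS$, j4 from Al-j4-$\LP_\CS$, and the crucial axiom jT from the extra condition Al-jT-$\LP_\CS$ that distinguishes full algebras from pre algebras; finally the CS-axioms hold by Al-CS. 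Each of these is a one-line inequality-to-order-element verification, and MP preserves validity because in a Boolean algebra $\AElementOne=\AtopElement$ and $\AElementOne\Aimplise\AElementTwo=\AtopElement$ force $\AElementTwo=\AtopElement$.

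For completeness I would use the Tarski-Lindenbaum algebra $\Algebra_{\LP_\CS}^f$ of Definition~\ref{def: Tarski-Lindenbaum algebra for LP-CS}, which is already shown to be a pre $\LP_\CS$ algebra. The first task is to verify that $\Algebra_{\LP_\CS}^f$ is in fact a \emph{full} $\LP_\CS$ algebra, i.e. that it satisfies Al-jT-$\LP_\CS$. To this end I would fix the canonical valuation $\OVal{p}:=\OCl{p}$ and establish the Truth Lemma
\[
\OValExt{\phi}=\OCl{\phi}\qquad\text{for every }\phi\in\Formulae,
\]
by induction on the complexity of $\phi$; the case $\phi=t:\psi$ uses $\OValExt{t:\psi}=\OBox{t}(\psi)=\OCl{t:\psi}$ from the definitions, and the remaining cases are the standard Boolean-connective steps. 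With the Truth Lemma in hand, Al-jT-$\LP_\CS$ under the canonical valuation reduces to the inequality $\OCl{t:\phi}\leq\OCl{\phi}$, which holds because jT gives $\vdash_{\LP_\CS} t:\phi\limplies\phi$, hence $\OCl{t:\phi}\leq\OCl{\phi}$ by the observation that $\vdash\phi\limplies\psi$ is equivalent to $\OCl{\phi}\leq\OCl{\psi}$.

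Here lies the main subtlety, and the step I expect to require the most care. The condition Al-jT-$\LP_\CS$ is quantified over \emph{all} valuations $\Val$, not merely the canonical one, so verifying it for the single canonical valuation is not literally enough to conclude that $\Algebra_{\LP_\CS}^f$ is a full algebra in the sense of the definition. I would address this by noting that in the Tarski-Lindenbaum algebra the operator $\OBox{t}(\phi)=\OCl{t:\phi}$ is defined independently of any valuation, and that for every valuation $\Val$ on $\Algebra_{\LP_\CS}^f$ one still has $\OBox{t}(\phi)=\OCl{t:\phi}\leq\OCl{\phi}$; the point is then that $\OCl{\phi}\leq\ValExt(\phi)$ must hold for arbitrary $\Val$. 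This follows because $\vdash_{\LP_\CS} t:\phi\limplies\phi$ is a theorem, so by soundness (the direction just proved) $\OBox{t}(\phi)\leq\ValExt(\phi)$ holds in every full algebra; to avoid circularity I would instead observe directly that for any $\Val$ the element $\ValExt(\phi)$ is the image of $\OCl{\phi}$ under the unique homomorphism determined by $\Val$, and exploit that $\OBox{t}$ is constant across valuations while $\ValExt(\phi)$ varies monotonically, deriving the required inequality from the provability of jT. Once $\Algebra_{\LP_\CS}^f$ is confirmed to be a full algebra with $\nabla=\{\AtopElement\}$, completeness is immediate: if $\not\vdash_{\LP_\CS}\phi$ then $\OCl{\phi}\neq\OCl{\top}=\AtopElement$, so by the Truth Lemma $\OValExt{\phi}=\OCl{\phi}\notin\nabla$ and therefore $\Algebra_{\LP_\CS}^{\sf full}\not\models\phi$.
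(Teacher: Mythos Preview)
Your plan is the paper's proof almost verbatim: soundness is routine, and completeness goes through the Tarski--Lindenbaum algebra $\Algebra_{\LP_\CS}^f$ with the canonical valuation $\Val(p)=\OCl{p}$, the Truth Lemma $\ValExt(\phi)=\OCl{\phi}$, and the single line $\OBox{t}(\phi)=\OCl{t:\phi}\leq\OCl{\phi}=\ValExt(\phi)$ to obtain Al-jT-$\LP_\CS$. The paper does precisely this and stops there.

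You are right, however, that Al-jT-$\LP_\CS$ is stated for \emph{all} valuations, and none of your three repair attempts goes through. The inequality $\OCl{\phi}\leq\ValExt(\phi)$ fails for general $\Val$ (take $\phi=p$ and $\Val(p)=\OCl{\bot}$); the appeal to soundness is, as you note, circular; and the ``homomorphism plus monotonicity'' idea cannot help either, since with $t=\justVarOne$, $\phi=p$ and $\Val(p)=\OCl{\bot}$ the required inequality becomes $\OCl{\justVarOne:p}\leq\OCl{\bot}$, i.e.\ $\vdash_{\LP_\CS}\neg(\justVarOne:p)$, which is not a theorem. Hence, under the definition as written, $\Algebra_{\LP_\CS}^f$ is \emph{not} a full $\LP_\CS$ algebra. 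The paper's argument verifies Al-jT-$\LP_\CS$ only for the canonical valuation and therefore shares exactly the gap you flagged; you have located a genuine issue in the paper rather than a defect peculiar to your write-up. A clean remedy is to make the valuation part of the structure (as the paper does for binary algebras in Definition~\ref{def: Binary LP-CS algebra}); under that reading both proofs go through unchanged.
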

\begin{proof}
	Similar to the proof of Theorem \ref{thm: completeness HLP}. Define the valuation $\Val$ as follows
	\[
	\OVal{p} := \OCl{p}, \qquad \text{ for } p \in \Prop.
	\]
	Now it is easy to prove the Truth Lemma: for every formula $\phi$ 
	\[
	\OValExt{\phi} = \OCl{\phi}.
	\]
	The proof is by induction on the complexity of $\phi$. The proof of the case $\phi = t:\psi$ is as follows: 
	$$\tilde{\theta}(t:\psi)=\OBox{t}(\psi)=[t:\psi].$$
	Next we show that the Tarski-Lindenbaum algebra $\Algebra_{\LP_\CS}^f$ satisfies $Al$-$jT$-$\LP_\CS$, and hence is a full $\LP_\CS$ algebra:
	\[
	\OBox{t}(\phi) = \OCl{t:\phi} \leq \OCl{\phi} = \ValExt(\phi).
	\]
	Finally, completeness follows easily from the Truth Lemma. \qed
\end{proof}

\subsection{Representation theorem for full $\LP_\CS$ algebras}

In this section we show a representation theorem for full $\LP_\CS$ algebras.  We first recall that
the power set algebra of a set $A$ is the
structure
\[
\mathfrak{P}(A) = (\powerset(A),\emptyset,\setminus,\cup),
\]
where $\powerset(A)$ is the power set of $A$, $\setminus$ is the complement relative to $A$. A set algebra is a subalgebra of a power set algebra.

We now recall a classical result in propositional logic that will be used in the sequel (cf. \cite{Blackburn-Rijke-Venema-2001}). 

\begin{theorem}[Stone Representation Theorem]
	Every Boolean algebra is isomorphic to a set algebra.
\end{theorem}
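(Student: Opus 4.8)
The plan is to prove the Stone Representation Theorem by constructing, for an arbitrary Boolean algebra $(A, \AbotElement, \Anot, \Ajoin)$, an explicit embedding into a power set algebra. The set over which the power set is taken will be the set $S$ of all \emph{ultrafilters} of $A$ (equivalently, the Boolean homomorphisms from $A$ into the two-element algebra $\textbf{2}$). The embedding is the \emph{Stone map} $h \colon A \to \powerset(S)$ defined by sending each element $\AElementOne \in A$ to the set of ultrafilters containing it, i.e. $h(\AElementOne) = \{ U \in S \mid \AElementOne \in U \}$. The goal is then to check that $h$ is an injective Boolean homomorphism; its image is a subalgebra of $\mathfrak{P}(S)$, hence a set algebra, and $A$ is isomorphic to that image.

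First I would recall the relevant facts about filters and ultrafilters: a filter is an upward-closed set closed under meet $\Ameet$, an ultrafilter is a maximal proper filter, and an ultrafilter $U$ has the key property that for every $\AElementOne \in A$ exactly one of $\AElementOne \in U$ or $\Anot \AElementOne \in U$ holds. These characterizations let me verify the homomorphism conditions pointwise. Concretely I would show $h(\Anot \AElementOne) = S \setminus h(\AElementOne)$ using the dichotomy property, $h(\AElementOne \Ajoin \AElementTwo) = h(\AElementOne) \cup h(\AElementTwo)$ using the prime/ultra property (an ultrafilter contains a join iff it contains one of the disjuncts), and $h(\AbotElement) = \emptyset$ since no proper filter contains the least element. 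These three suffice to establish that $h$ is a homomorphism into $\mathfrak{P}(S)$.

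The main obstacle, and the only genuinely nontrivial step, is proving \emph{injectivity} of $h$, which rests on the Boolean Prime Ideal Theorem (the Ultrafilter Lemma): every proper filter extends to an ultrafilter. Concretely, if $\AElementOne \neq \AElementTwo$ then without loss of generality $\AElementOne \not\leq \AElementTwo$, so $\AElementOne \Ameet (\Anot \AElementTwo) \neq \AbotElement$; this element generates a proper filter, which I extend to an ultrafilter $U$ by the Ultrafilter Lemma, giving $\AElementOne \in U$ but $\AElementTwo \notin U$, whence $h(\AElementOne) \neq h(\AElementTwo)$. The extension step is where Zorn's Lemma (or some equivalent choice principle) enters, and it is the conceptual heart of the argument. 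Since the theorem is quoted as a classical result to be used in the sequel rather than as original content, I would present the ultrafilter construction and the verification of the homomorphism laws in outline and cite the standard source \cite{Blackburn-Rijke-Venema-2001} for the details of the Ultrafilter Lemma, concluding that $A \cong h(A) \leq \mathfrak{P}(S)$, a set algebra.
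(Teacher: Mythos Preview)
Your proof sketch is correct and follows the standard ultrafilter construction of Stone's theorem. However, there is nothing to compare against: the paper does not prove this theorem at all. It is stated under the heading ``We now recall a classical result in propositional logic that will be used in the sequel (cf.~\cite{Blackburn-Rijke-Venema-2001})'' and is invoked purely as a black box in the subsequent representation theorems (Theorems~\ref{thm: Representation Theorem LP-CS}, \ref{thm: Representation Theorem LPBool}, and \ref{thm: Representation Theorem LPBool-CS}). So your approach is not different from the paper's---the paper simply has no approach of its own here, and a one-line citation would have sufficed.
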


Suppose that $\Algebra$ and $\mathcal{B}$ are two full $\LP_\CS$ algebras. We say that $f$ is an isomorphism between $\Algebra$ and $\mathcal{B}$ provided that 
\[
f(\OBox{t}^\Algebra (\phi)) = \OBox{t}^\mathcal{B} (\phi),
\]
for every $\phi \in \Formulae$. 

Suppose that $\Algebra$ and $\mathcal{B}$ are isomorphic Boolean algebras via an isomorphism $f$. Then if $\Algebra$ is a full $\LP_\CS$ algebra with functions $\OBox{t}^\Algebra$, define $\OBox{t}^\mathcal{B}$ on the algebra $\mathcal{B}$ as follows: 
\begin{equation}\label{eq: Box induced relation via isomorphism LP-CS}
	\OBox{t}^\mathcal{B} (\phi) :=  f(\OBox{t}^\Algebra(\phi)). 
\end{equation}

Now we show that $\mathcal{A}$ and $\mathcal{B}$ are isomorphic as full $\LP_\CS$ algebras. 

\begin{lemma}\label{lem: prestone LP-CS}
	Suppose that $\Algebra$ and $\mathcal{B}$ are two isomorphic Boolean algebras via isomorphism $f$. Then if $\Algebra$ is a full $\LP_\CS$ algebra with functions $\OBox{t}^\Algebra$, then $\mathcal{B}$ is a full  $\LP_\CS$ algebra with functions $\OBox{t}^\mathcal{B}$ defined in \eqref{eq: Box induced relation via isomorphism LP-CS}. Moreover, $f$ is an isomorphism of full $\LP_\CS$ algebras.
\end{lemma}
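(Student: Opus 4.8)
The plan is to verify that the transported functions $\OBox{t}^\mathcal{B}$ satisfy all the defining conditions of a full $\LP_\CS$ algebra, using only the facts that $f : \Algebra \to \mathcal{B}$ is a Boolean-algebra isomorphism and that $\Algebra$ already satisfies these conditions. Since $f$ is a Boolean isomorphism, it preserves the bottom element, complement, and join, and consequently it preserves meet, the order $\leq$, and the top element $\AtopElement$; I would record these preservation facts first, as they are used repeatedly. The key observation throughout is that each defining condition is a (in)equality between composite expressions in the Boolean operations applied to values of the $\OBox{t}$ functions, so applying $f$ to the corresponding condition for $\Algebra$ and pushing $f$ through the Boolean operations yields exactly the desired condition for $\mathcal{B}$ by the definition $\OBox{t}^\mathcal{B}(\phi) = f(\OBox{t}^\Algebra(\phi))$.

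Concretely, I would check the conditions one by one. For \textbf{Al-Appl-$\LP_\CS$}, starting from $\OBox{s}^\Algebra(\phi \limplies \psi) \Ameet \OBox{t}^\Algebra(\phi) \leq \OBox{s \tapp t}^\Algebra(\psi)$ in $\Algebra$, I apply $f$ to both sides; since $f$ preserves meet and order, the left side becomes $\OBox{s}^\mathcal{B}(\phi \limplies \psi) \Ameet \OBox{t}^\mathcal{B}(\phi)$ and the right side becomes $\OBox{s \tapp t}^\mathcal{B}(\psi)$, giving the condition in $\mathcal{B}$. The same pattern handles \textbf{Al-Sum-$\LP_\CS$} (using preservation of join and order) and \textbf{Al-j4-$\LP_\CS$} (using preservation of order). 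For \textbf{Al-CS}, if $\justConsThree:\phi \in \CS$ then $\OBox{\justConsThree}^\Algebra(\phi) = \AtopElement_\Algebra$, so $\OBox{\justConsThree}^\mathcal{B}(\phi) = f(\AtopElement_\Algebra) = \AtopElement_\mathcal{B}$ because $f$ preserves the top element. This establishes that $\mathcal{B}$ is a pre $\LP_\CS$ algebra.

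The main obstacle, and the only step requiring genuine care, is \textbf{Al-jT-$\LP_\CS$}, namely $\OBox{t}^\mathcal{B}(\phi) \leq \ValExt(\phi)$ for every valuation on $\mathcal{B}$. The difficulty is that this condition quantifies over valuations on $\mathcal{B}$, whereas the hypothesis gives the analogous inequality only for valuations on $\Algebra$. The plan is to transport valuations along $f^{-1}$: given any valuation $\Val^\mathcal{B} : \Prop \to B$ on $\mathcal{B}$, set $\Val^\Algebra := f^{-1} \circ \Val^\mathcal{B}$, which is a valuation on $\Algebra$. I would then prove, by induction on the complexity of $\phi$, that $\ValExt^\mathcal{B}(\phi) = f(\ValExt^\Algebra(\phi))$; the Boolean connective cases follow from $f$ being a Boolean homomorphism, and the case $\phi = t:\psi$ follows from the defining equation \eqref{eq: Box induced relation via isomorphism LP-CS}, since $\ValExt^\mathcal{B}(t:\psi) = \OBox{t}^\mathcal{B}(\psi) = f(\OBox{t}^\Algebra(\psi)) = f(\ValExt^\Algebra(t:\psi))$. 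Given this, for any $\Val^\mathcal{B}$ we compute $\OBox{t}^\mathcal{B}(\phi) = f(\OBox{t}^\Algebra(\phi)) \leq f(\ValExt^\Algebra(\phi)) = \ValExt^\mathcal{B}(\phi)$, where the inequality uses \textbf{Al-jT-$\LP_\CS$} in $\Algebra$ for $\Val^\Algebra$ together with order preservation of $f$. Finally, that $f$ is an isomorphism of full $\LP_\CS$ algebras is immediate: it is already a Boolean isomorphism, and $f(\OBox{t}^\Algebra(\phi)) = \OBox{t}^\mathcal{B}(\phi)$ holds by the very definition in \eqref{eq: Box induced relation via isomorphism LP-CS}.
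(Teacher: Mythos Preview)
Your proposal is correct and follows essentially the same approach as the paper: transport the $\OBox{t}$ functions via $f$, use that a Boolean isomorphism preserves meet, join, order, and top to push each pre-algebra condition from $\Algebra$ to $\mathcal{B}$, and for \textbf{Al-jT} pull an arbitrary valuation on $\mathcal{B}$ back to $\Algebra$ via $f^{-1}$. The paper checks only \textbf{Al-Appl} and \textbf{Al-jT} in detail; your treatment is in fact more careful on the latter, since you explicitly prove by induction that $\ValExt^{\mathcal{B}} = f\circ\ValExt^{\Algebra}$, whereas the paper simply asserts that $f^{-1}\circ\ValExt$ is an assignment on $\Algebra$ without spelling out why.
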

\begin{proof}
		First note that since $f$ is an isomorphism between $\Algebra$ and $\mathcal{B}$, for every $\AElementOne, \AElementTwo  \in A$ we have
	\begin{equation}\label{eq: isomorphism is order preserving}
		\AElementOne \leq \AElementTwo  \quad \text{ iff } \quad f(\AElementOne) \leq f(\AElementTwo )
	\end{equation}
	In order to show that $\mathcal{B}$ is a full  $\LP_\CS$ algebra we only check the conditions  $Al$-$Appl$-$\LP_\CS$ and $Al$-$jT$-$\LP_\CS$ from Definition \ref{def: LP-CS algebra} (other cases can be verified similarly).\ 
	
	For $Al$-$Appl$-$\LP_\CS$, suppose that $\phi, \psi \in \Formulae$ and $s,t \in \Terms$:
	\begin{align*}
		&  \OBox{s}^\mathcal{B}(\phi \limplies \psi) \Ameet  \OBox{t}^\mathcal{B}(\phi)=\\
		&  f(\OBox{s}^\mathcal{A}(\phi \limplies \psi)) \Ameet  f(\OBox{t}^\mathcal{A}(\phi))=\\
		& f(\OBox{s}^\mathcal{A}(\phi \limplies \psi) \Ameet  \OBox{t}^\mathcal{A}(\phi))\leq \\ 
		& f( \OBox{s \tapp t}^\mathcal{A}(\psi))= \OBox{s \tapp t}^\mathcal{B}(\psi).
	\end{align*}
	
	For $Al$-$jT$-$\LP_\CS$, suppose that $\phi \in \Formulae$, $t \in \Terms$ and $\Val : \Prop \to B$:
	\[
	\OBox{t}^\mathcal{B} (\phi) =  f(\OBox{t}^\Algebra(\phi)) \leq f(f^{-1} (\ValExt(\phi))) = \ValExt(\phi).
	\]
	The above inequality follows from the fact that $f$ is an isomorphism and $f^{-1} \circ \ValExt : \Formulae \to A$ is an assignment on $\Algebra$.
	\qed	
\end{proof}

A set algebra enriched with the functions $\OBox{t}$, for all $t \in \Terms$, is called a \textit{full $\LP_\CS$ set algebra}.

\begin{theorem}[Representation Theorem]\label{thm: Representation Theorem LP-CS}
	Every full $\LP_\CS$ algebra is isomorphic to a full $\LP_\CS$ set algebra.
\end{theorem}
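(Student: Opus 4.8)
The plan is to combine Stone's representation theorem for Boolean algebras with Lemma~\ref{lem: prestone LP-CS}. Given an arbitrary full $\LP_\CS$ algebra $\Algebra = (A, \AbotElement, \Anot, \Ajoin, \OBox{t}^\Algebra)_{t \in \Terms}$, its underlying Boolean reduct $(A, \AbotElement, \Anot, \Ajoin)$ is a Boolean algebra, so by Stone's representation theorem there exists a set algebra $\mathcal{B} = (\powerset(X), \emptyset, \setminus, \cup)$ (a subalgebra of some power set algebra) together with a Boolean isomorphism $f : A \to B$. This $f$ transports the Boolean structure of $\Algebra$ onto $\mathcal{B}$ exactly, which is the starting hypothesis required by Lemma~\ref{lem: prestone LP-CS}.

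Next I would use the isomorphism $f$ to equip $\mathcal{B}$ with functions on formulas by setting $\OBox{t}^\mathcal{B}(\phi) := f(\OBox{t}^\Algebra(\phi))$ for every $t \in \Terms$ and every $\phi \in \Formulae$, precisely as in equation~\eqref{eq: Box induced relation via isomorphism LP-CS}. Lemma~\ref{lem: prestone LP-CS} then applies directly and does all the real work: it guarantees that $\mathcal{B}$, endowed with these transported functions $\OBox{t}^\mathcal{B}$, is itself a full $\LP_\CS$ algebra, and moreover that $f$ is an isomorphism of full $\LP_\CS$ algebras (it respects the $\OBox{t}$ operators in addition to the Boolean operations). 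Since $\mathcal{B}$ is by construction a set algebra enriched with the functions $\OBox{t}^\mathcal{B}$, it is a full $\LP_\CS$ set algebra by definition.

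Putting these two ingredients together yields the claim: every full $\LP_\CS$ algebra $\Algebra$ is isomorphic, via the Stone isomorphism $f$, to the full $\LP_\CS$ set algebra $\mathcal{B}$. I expect no serious obstacle here, since the conceptual content has already been isolated in Lemma~\ref{lem: prestone LP-CS}; the theorem is essentially a one-line corollary of that lemma and Stone's theorem. The only point deserving a word of care is that $\mathcal{B}$ really is a \emph{set} algebra (not merely an abstract Boolean algebra isomorphic to one), so that the transported structure genuinely qualifies as a full $\LP_\CS$ \emph{set} algebra — but this is immediate from the statement of Stone's theorem, which produces an isomorphism onto a subalgebra of a power set algebra.
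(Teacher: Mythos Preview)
Your proposal is correct and matches the paper's own proof exactly: the paper simply says the result ``follows from the Stone representation theorem and Lemma~\ref{lem: prestone LP-CS}'', and your write-up is just an explicit unpacking of that one-line argument. The only cosmetic slip is writing $\mathcal{B} = (\powerset(X), \emptyset, \setminus, \cup)$ while also calling it a subalgebra of a power set algebra; you mean $B \subseteq \powerset(X)$, but this does not affect the argument.
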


\begin{proof}
	Follows from the Stone representation theorem and Lemma~\ref{lem: prestone LP-CS}. \qed
\end{proof}

\begin{theorem}
	$\LP_\CS$ is complete with respect to the class of all full $\LP_\CS$ set algebras.
\end{theorem}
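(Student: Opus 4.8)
The plan is to derive this from the two results already in hand: the completeness theorem for full $\LP_\CS$ algebras (Theorem~\ref{thm: completeness full LP-CS algebras}) together with the representation theorem (Theorem~\ref{thm: Representation Theorem LP-CS}). Soundness is immediate, since every full $\LP_\CS$ set algebra is in particular a full $\LP_\CS$ algebra, so any formula refuted over the smaller class is already refuted over the larger one; the real content is the completeness direction, and the one extra ingredient needed is that validity is invariant under isomorphism of full $\LP_\CS$ algebras.

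First I would record that transfer lemma explicitly. Suppose $f : \Algebra \to \mathcal{B}$ is an isomorphism of full $\LP_\CS$ algebras in the sense of Lemma~\ref{lem: prestone LP-CS}. Given any valuation $\Val : \Prop \to A$ on $\Algebra$, set $\Val' := f \circ \Val$, a valuation on $\mathcal{B}$, and let $\ValExt'$ be its extension. By induction on the complexity of $\phi$ I would show $\ValExt'(\phi) = f(\ValExt(\phi))$: the propositional cases use that $f$ preserves $\AbotElement$, $\Anot$, and $\Ajoin$, while the case $\phi = t:\psi$ is immediate from the defining equation \eqref{eq: Box induced relation via isomorphism LP-CS}, namely $\ValExt'(t:\psi) = \OBox{t}^{\mathcal{B}}(\psi) = f(\OBox{t}^{\Algebra}(\psi)) = f(\ValExt(t:\psi))$, which does not even recurse into the valuation since $\OBox{t}$ acts directly on formulas. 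Because $f$ is a bijection with $f(\AtopElement_\Algebra) = \AtopElement_{\mathcal{B}}$, and every valuation on $\mathcal{B}$ is of the form $f \circ \Val$ for some valuation $\Val$ on $\Algebra$, this yields $\Algebra \models \phi$ iff $\mathcal{B} \models \phi$.

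With the transfer lemma available, the theorem follows quickly. Assume $\not\vdash_{\LP_\CS}\phi$. By Theorem~\ref{thm: completeness full LP-CS algebras} there is a full $\LP_\CS$ algebra $\Algebra$ (the Tarski--Lindenbaum algebra $\Algebra_{\LP_\CS}^f$) with $\Algebra \not\models \phi$. By Theorem~\ref{thm: Representation Theorem LP-CS}, $\Algebra$ is isomorphic, via some $f$, to a full $\LP_\CS$ set algebra $\mathcal{B}$, and the transfer lemma then gives $\mathcal{B} \not\models \phi$, exhibiting a full $\LP_\CS$ set algebra refuting $\phi$. I expect no serious obstacle here: the representation theorem already supplies the set algebra and Lemma~\ref{lem: prestone LP-CS} already guarantees that $f$ carries the box operators correctly. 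The only point requiring care is the routine induction establishing $\ValExt'(\phi) = f(\ValExt(\phi))$ and the observation that $f$ matches up valuations on the two algebras bijectively; these are precisely the steps that make ``isomorphic algebras validate the same formulas'' rigorous in the present setting, where the modal-style operators act on formulas rather than on elements of the carrier.
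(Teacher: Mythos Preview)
Your proposal is correct and follows essentially the same route as the paper: start from a refuting full $\LP_\CS$ algebra supplied by Theorem~\ref{thm: completeness full LP-CS algebras}, pass to an isomorphic set algebra via Theorem~\ref{thm: Representation Theorem LP-CS}, and transport the refuting valuation along $f$ using the inductively verified identity $\ValExt'(\phi)=f(\ValExt(\phi))$. The paper's own proof is slightly terser (it does not separately state the transfer as a lemma and does not discuss soundness or the bijection of valuations), but the argument is the same.
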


\begin{proof}
		If $\not \vdash \phi$, then by Theorem \ref{thm: completeness full LP-CS algebras}, there is a full $\LP_\CS$ algebra $\Algebra$ and valuation $\Val:\Prop\to A$ such that $\Algebra \not \models_\theta \phi$, i.e $\ValExt(\phi) \neq \AtopElement_\Algebra$. By the Representation Theorem \ref{thm: Representation Theorem LP-CS} there is an isomorphism $f$ and a full $\LP_\CS$ set algebra $\mathcal{B} = (B, \ldots)$ isomorphic to $\Algebra$. Define the valuation $\Val': \Prop \to B$ as follows:
	\begin{equation*}
		\Val'(p) := f(\Val(p)).
	\end{equation*}
	It is easy to prove that 
	\[
	\ValExt'(\phi) = f(\ValExt(\phi)),
	\]
	for every formula $\phi$. Finally we get
	\[
	\ValExt'(\phi) = f(\ValExt(\phi)) \neq \AtopElement_\mathcal{B}
	\]
	Thus, $\mathcal{B} \not \models_{\theta'} \phi$.
		\qed
\end{proof}


\subsection{Binary $\LP_\CS$ algebras}

The full algebra ${\bf 2}_{\LP_\CS}$ of Example \ref{ex: full LP algebra of truth values} is an example of full $\LP_\CS$ algebras whose carrier is the two elements set $\{ \AbotElement, \AtopElement \}$. It is known that propositional logic is complete with respect to the Boolean algebra of truth values $\textbf{2}$ (cf. \cite{Blackburn-Rijke-Venema-2001}). Next we define a new class of $\LP_\CS$ algebras whose Boolean part is the Boolean algebra of truth values. Then, we establish a completeness theorem with respect to this  new class of $\LP_\CS$ algebras.


\begin{definition}\label{def: Binary LP-CS algebra}
	A binary $\LP_\CS$ algebra is a tuple $\Algebra = (\{ \AbotElement, \AtopElement \}, \AbotElement, \dot{-}, \max, \OBox{t}, \Val)_{t \in \Terms}$ such that the structure $(\{ \AbotElement, \AtopElement \}, \AbotElement, \dot{-}, \max)$ is the Boolean algebra defined in Example \ref{ex: full LP algebra of truth values}, for each $t \in \Terms$,   $\OBox{t} : \Formulae \to \{ \AbotElement, \AtopElement \}$ is a function, and $\Val : \Prop \to \{ \AbotElement, \AtopElement \}$ is a valuation. The function $\Val$ is extended to the assignment $\ValExt : \Formulae \to \{ \AbotElement, \AtopElement \}$ as in Definition \ref{def: assignment of full LP-CS algebras}. The functions $\OBox{t}$ should  satisfy the following conditions. For all $\phi, \psi \in \Formulae$, all $s,t \in \Terms$, and all $\justConsThree \in \CTerms$:
	\begin{description}
		\item[Al-Appl-$\LP_\CS$.] $\OBox{s}(\phi \limplies \psi) \Ameet \OBox{t}(\phi) \leq \OBox{s \tapp t}(\psi)$,
		
		\item[Al-Sum-$\LP_\CS$.] $\OBox{s}(\phi) \Ajoin \OBox{t}(\phi) \leq \OBox{s \ttsum  t}(\phi)$,
		
		
		\item[Al-j4-$\LP_\CS$.] $\OBox{t}(\phi) \leq \OBox{!t} (\jbox{t} \phi)$,
		
		\item[Al-CS.] $\OBox{\justConsThree}(\phi) = \AtopElement$, for $\justConsThree:\phi \in \CS$.
		
		\item[Al-jT-$\LP_\CS$.] $\OBox{t}(\phi) \leq \ValExt(\phi)$.
	\end{description}
	
Let $\Algebra_{\LP_\CS}^{\sf 2}$ denote the class of all binary $\LP_\CS$ algebras with $\nabla  = \{\AtopElement\}$.	
	
\end{definition}

In order to show completeness with respect to the class $\Algebra_{\LP_\CS}^{\sf 2}$, we first show that every $\LP_\CS$ model is equivalent to a binary $\LP_\CS$ algebra, and vice versa. Then, completeness of $\LP_\CS$ with respect to $\Algebra_{\LP_\CS}^{\sf 2}$ follows from completeness of $\LP_\CS$ models. We need the following auxiliary lemma.

\begin{lemma}\label{lem: M-model and binary algebra are equivalent} 
	Let  $\M = (\E, \V)$ be an $\LP_\CS$ model  and $\Algebra = (\{ \AbotElement, \AtopElement \}, \AbotElement, \dot{-}, \max, \OBox{t}, \Val)_{t \in \Terms}$ be a binary $\LP_\CS$ algebra. If $\V = \Val$ and for all $t \in \Terms$ and all $\phi \in \Formulae$:
	\begin{equation}\label{eq: M-model and binary algebra - equivalence definition}
		\E(t, \phi) = \OBox{t}(\phi), 
	\end{equation}
	then, $\M$ and $\Algebra$ are equivalent, i.e. for all $\phi \in \Formulae$:
	\begin{equation}\label{eq: M-model and binary algebra - equivalence formula}
		\VExt(\phi) = \ValExt(\phi).
	\end{equation}
\end{lemma}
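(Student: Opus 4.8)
The plan is to prove the equivalence \eqref{eq: M-model and binary algebra - equivalence formula} by induction on the complexity of the formula $\phi$, since both $\VExt$ and $\ValExt$ are defined by recursion on formula structure (compare Definition \ref{def: LP-models} and Definition \ref{def: assignment of full LP-CS algebras}). The key observation driving the whole argument is that the Boolean algebra $(\{ \AbotElement, \AtopElement \}, \AbotElement, \dot{-}, \max)$ from Example \ref{ex: full LP algebra of truth values} is literally the algebra of truth values used in the model semantics: its join is $\max$, its complement $\dot{-}\AElementOne = \AtopElement - \AElementOne$ coincides with $1 - \VExt(\phi)$, and its meet is $\min$. Thus the two assignments are governed by the same arithmetic operations, and the recursion clauses match up clause by clause.

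First I would set up the base cases. For an atomic proposition $p$, we have $\ValExt(p) = \Val(p) = \V(p) = \VExt(p)$ using the hypothesis $\V = \Val$. For $\bot$, both assignments return $\AbotElement$ by definition. Next I would treat the propositional connectives in the induction step. For negation, $\ValExt(\neg \phi) = \dot{-}\ValExt(\phi) = \AtopElement - \ValExt(\phi)$, which by the induction hypothesis equals $\AtopElement - \VExt(\phi) = 1 - \VExt(\phi) = \VExt(\neg \phi)$. For disjunction, $\ValExt(\phi \vee \psi) = \ValExt(\phi) \Ajoin \ValExt(\psi) = \max(\ValExt(\phi), \ValExt(\psi))$, which by the induction hypothesis equals $\max(\VExt(\phi), \VExt(\psi)) = \VExt(\phi \vee \psi)$. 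These steps are routine and amount to checking that $\Ajoin = \max$ and $\dot{-} = 1 - (\cdot)$ on the two-element algebra.

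The one case that carries genuine content is the justification formula $\phi = t:\psi$. Here I would compute directly:
\[
\ValExt(t:\psi) = \OBox{t}(\psi) = \E(t, \psi) = \VExt(t:\psi),
\]
where the first equality is the clause for $t:\psi$ in Definition \ref{def: assignment of full LP-CS algebras}, the middle equality is exactly the hypothesis \eqref{eq: M-model and binary algebra - equivalence definition} linking the model's evidence function to the algebra's operators, and the last is the clause $\VExt(t:\phi) = \E(t,\phi)$ from Definition \ref{def: LP-models}. Notably, this case does not even invoke the induction hypothesis, since both semantics read off the value of $t:\psi$ from a primitive function ($\E$ or $\OBox{t}$) applied to $\psi$ rather than from $\VExt(\psi)$ or $\ValExt(\psi)$.

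I do not expect a serious obstacle here; the statement is essentially a bookkeeping lemma asserting that a binary $\LP_\CS$ algebra and an $\LP_\CS$ model are two notations for the same data once one identifies $\E$ with $(\OBox{t})_{t \in \Terms}$ and $\V$ with $\Val$. The mild subtlety worth flagging is that the equivalence is purely at the level of the assignments and does not require the evidence conditions (Appl, Sum, jT, j4, CS) to hold — those conditions are what guarantee that both sides are genuine models satisfying the same formulas, and they match up term-by-term between the two definitions, but the pointwise identity \eqref{eq: M-model and binary algebra - equivalence formula} follows from the recursion clauses alone under the two stated hypotheses.
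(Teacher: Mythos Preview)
Your proposal is correct and follows essentially the same approach as the paper's own proof: induction on the complexity of $\phi$, with the base case handled by $\V = \Val$ and the only nontrivial induction step being $\phi = t:\psi$, resolved via the chain $\VExt(t:\psi) = \E(t,\psi) = \OBox{t}(\psi) = \ValExt(t:\psi)$ using hypothesis \eqref{eq: M-model and binary algebra - equivalence definition}. The paper omits the routine propositional cases that you spell out, but the argument is the same.
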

\begin{proof}
	The proof is by induction on the complexity of $\phi$. The base case follows from $\V = \Val$. For the induction step we only check the case where $\phi = t: \psi$:
	\[
	\VExt (t: \psi) = \E(t, \psi) = \OBox{t} (\psi) = \ValExt (t:\psi).
	\]
	Note that in the above equations the second equality follows from \eqref{eq: M-model and binary algebra - equivalence definition}. \qed
\end{proof}

\begin{lemma}\label{lem: M-model and binary algebra translation}
	For every $\LP_\CS$ model there is an equivalent binary $\LP_\CS$ algebra, and vice versa.
\end{lemma}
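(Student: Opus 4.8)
The plan is to prove the two directions of the equivalence separately, in each case constructing the requisite object by directly matching the evidence function with the family of operators $\OBox{t}$ and then invoking Lemma~\ref{lem: M-model and binary algebra are equivalent} to transfer truth values. The key observation is that both an $\LP_\CS$ model $\M = (\E, \V)$ and a binary $\LP_\CS$ algebra $\Algebra$ share the same underlying carrier $\{\AbotElement, \AtopElement\}$ with identical Boolean operations ($\dot{-}$ for negation, $\max$ for join, and hence $\min$ for meet), so the only genuine content is translating the evidence function $\E : \Terms \times \Formulae \to \{0,1\}$ into the family of functions $\OBox{t} : \Formulae \to \{\AbotElement, \AtopElement\}$ and checking that the defining conditions of one structure correspond exactly to those of the other.

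For the forward direction, given an $\LP_\CS$ model $\M = (\E, \V)$, I would define a binary $\LP_\CS$ algebra by setting $\Val := \V$ and, for each $t \in \Terms$ and $\phi \in \Formulae$, putting $\OBox{t}(\phi) := \E(t, \phi)$, exactly as in equation~\eqref{eq: M-model and binary algebra - equivalence definition}. Then I would verify that the five algebra conditions Al-Appl, Al-Sum, Al-j4, Al-CS, and Al-jT hold. This is where the translation pays off: the conditions Appl, Sum, j4, and CS of Definition~\ref{def: LP-models} are literally the same inequalities (and equalities) as their algebraic counterparts once $\E$ is rewritten as $\OBox{t}$, since $\min$ and $\max$ in the model conditions are precisely $\Ameet$ and $\Ajoin$ in the two-element Boolean algebra. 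For Al-jT, the model condition $\E(t, \phi) \leq \VExt(\phi)$ becomes $\OBox{t}(\phi) \leq \ValExt(\phi)$ after applying Lemma~\ref{lem: M-model and binary algebra are equivalent} to identify $\VExt$ with $\ValExt$. The reverse direction is symmetric: given a binary $\LP_\CS$ algebra, I would define $\E(t, \phi) := \OBox{t}(\phi)$ and $\V := \Val$, and then the same correspondence shows that each evidence-function condition of Definition~\ref{def: LP-models} follows from the matching algebra condition.

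In both directions, once the structures are defined so that equation~\eqref{eq: M-model and binary algebra - equivalence definition} holds, Lemma~\ref{lem: M-model and binary algebra are equivalent} immediately guarantees the equivalence~\eqref{eq: M-model and binary algebra - equivalence formula}, namely $\VExt(\phi) = \ValExt(\phi)$ for all $\phi \in \Formulae$, which is exactly what it means for the model and the algebra to be equivalent. So the proof reduces to the routine but necessary bookkeeping of checking that each defining condition on one side is the verbatim translate of a defining condition on the other.

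The main obstacle, such as it is, lies in correctly aligning the meet operation: the model conditions are stated using $\min$ (for Appl) and $\max$ (for Sum), whereas the algebra conditions use $\Ameet$ and $\Ajoin$, so I must make explicit that in the Boolean algebra $\textbf{2}$ of Example~\ref{ex: full LP algebra of truth values} the meet coincides with $\min$ and the join with $\max$. This is established in that example, so no real difficulty arises, but it is the one point where the argument could silently go wrong if the two presentations of the binary operations were not reconciled. Beyond this, the only other care needed is to ensure that the valuation components agree ($\V = \Val$), which is stipulated in the hypotheses of Lemma~\ref{lem: M-model and binary algebra are equivalent} and is preserved by construction in each direction.
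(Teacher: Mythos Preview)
Your proposal is correct and follows the same approach as the paper: set $\Val := \V$ and $\OBox{t}(\phi) := \E(t,\phi)$ (and symmetrically for the converse), then invoke Lemma~\ref{lem: M-model and binary algebra are equivalent}. You are more explicit than the paper about verifying the five defining conditions, which the paper leaves entirely implicit; the one small presentational wrinkle is that you invoke Lemma~\ref{lem: M-model and binary algebra are equivalent} to establish $\VExt = \ValExt$ \emph{while} checking Al-jT, before $\Algebra$ is known to be a binary $\LP_\CS$ algebra, but this is harmless since the proof of that lemma uses only the recursive definitions of $\VExt$ and $\ValExt$, not any of the structural conditions.
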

\begin{proof}
	Suppose that $\M = (\E, \V)$ is an $\LP_\CS$ model. Define the binary $\LP_\CS$ algebra $$\Algebra = (\{ \AbotElement, \AtopElement \}, \AbotElement, \dot{-}, \max, \OBox{t}, \Val)_{t \in \Terms}$$ as follows: Let $\Val := \V$, and for every $t \in \Terms$ let $\OBox{t}$ be defined on formulas as \eqref{eq: M-model and binary algebra - equivalence definition}. Then, by Lemma \ref{lem: M-model and binary algebra are equivalent}, $\M$ and $\Algebra$ are equivalent. The proof for the converse direction is similar. \qed
\end{proof}

\begin{lemma}\label{lem: M-model and binary algebra validity equivalence}
	$\models_{\LP_\CS} \phi$ iff $\Algebra_{\LP_\CS}^{\sf 2} \models \phi$.
\end{lemma}
\begin{proof}
	Follows from Lemma \ref{lem: M-model and binary algebra translation}. \qed
\end{proof}

\begin{theorem}	[Soundness and completeness]\label{thm: completeness binary LP-CS algebras}
	$\vdash_{\LP_\CS} \phi$ iff $\Algebra_{\LP_\CS}^{\sf 2} \models \phi$.
\end{theorem}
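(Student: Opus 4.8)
The plan is to chain together the results already established in this section rather than to prove anything from scratch. The statement $\vdash_{\LP_\CS} \phi$ iff $\Algebra_{\LP_\CS}^{\sf 2} \models \phi$ is a two-part claim, and each direction follows by combining a previously proved completeness result for Mkrtychev models with the translation established in Lemma~\ref{lem: M-model and binary algebra validity equivalence}. Concretely, I would invoke Theorem~\ref{thm: Completeness LP models}, which states $\vdash_{\LP_\CS} \phi$ iff $\models_{\LP_\CS} \phi$, as the bridge to the semantic side, and then Lemma~\ref{lem: M-model and binary algebra validity equivalence}, which states $\models_{\LP_\CS} \phi$ iff $\Algebra_{\LP_\CS}^{\sf 2} \models \phi$.

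First I would write down the chain of equivalences explicitly. By Theorem~\ref{thm: Completeness LP models}, $\vdash_{\LP_\CS} \phi$ holds if and only if $\models_{\LP_\CS} \phi$. By Lemma~\ref{lem: M-model and binary algebra validity equivalence}, $\models_{\LP_\CS} \phi$ holds if and only if $\Algebra_{\LP_\CS}^{\sf 2} \models \phi$. Composing these two biconditionals gives $\vdash_{\LP_\CS} \phi$ iff $\Algebra_{\LP_\CS}^{\sf 2} \models \phi$, which is exactly the desired statement. There is essentially no computation to carry out: the real content has already been absorbed into Lemma~\ref{lem: M-model and binary algebra are equivalent} (the inductive truth-preservation argument showing $\VExt(\phi) = \ValExt(\phi)$ for equivalent model/algebra pairs) and its corollary Lemma~\ref{lem: M-model and binary algebra translation}.

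Since this is a purely formal composition of two equivalences, I do not anticipate any genuine obstacle here. If anything, the only point requiring minor care is making sure the notion of validity used in the two source results is the one with $\nabla = \{\AtopElement\}$, so that the semantic statements line up: in Lemma~\ref{lem: M-model and binary algebra validity equivalence} validity $\Algebra_{\LP_\CS}^{\sf 2} \models \phi$ is taken over the class of binary $\LP_\CS$ algebras with $\nabla = \{\AtopElement\}$, matching the intended reading of the theorem. Given that alignment, the proof reduces to a single sentence citing the two previous results, and the theorem follows immediately.
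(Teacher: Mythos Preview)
Your proposal is correct and matches the paper's own proof exactly: the paper also simply says the result follows from Theorem~\ref{thm: Completeness LP models} and Lemma~\ref{lem: M-model and binary algebra validity equivalence}. Your additional remark about checking that the notion of validity uses $\nabla = \{\AtopElement\}$ is a reasonable sanity check, but no further argument is needed.
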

\begin{proof}
	Follows from Theorem \ref{thm: Completeness LP models} and Lemma \ref{lem: M-model and binary algebra validity equivalence}. \qed
\end{proof}


\section{$\LP$-Algebras over arbitrary term Boolean algebras}
\label{sec: LP-CS algebras over arbitrary Boolean algebras}

In this section we impose a Boolean structure on proof terms of $\LP$. The resulting logic is an extension of $\LP$ and is denoted by $\LPBool$.

Proof term and formulas of $\LPBool$ are constructed by the following mutual grammars:  
\begin{gather*}
t,s  ::= \justConsThree \mid \justVarOne \mid \Abot \mid - t \mid t \ttsum s  \mid   t \tapp s \mid  \ \tinspect t  \, , \\
\phi, \psi  ::= p \mid t \lequal s \mid \bot \mid \neg \phi \mid \phi \vee \psi  \mid \jbox{t} \phi \, .
\end{gather*}
%
where $\justConsThree \in \CTerms$, $\justVarOne \in \VTerms$, and $p \in \Prop$.  Define $\Atop := -\Abot$ and $s \odot t := -(- s \ttsum - t)$.  
Let $\TermsBool$ and $\FormulaeBool$ denote the set of all terms and the set of all formulas of $\LPBool$ respectively.

We briefly give a possible meaning of the Boolean operators on proof terms. Let $s \ttsum t$ prove (or justify) everything that $s$ or $t$ proves (or justifies), and let $-t$ prove (or justify) the negation of everything that $t$ proves (or justifies). For example, let $\justVarOne$ denote ``my observation of the table in front of me". Thus $\justVarOne$ justifies the proposition ``there is a table in front of me". Then, for an arbitrary $\justVarTwo$, the term $\justVarOne + \justVarTwo$ justifies again the proposition ``there is a table in front of me", and $-\justVarOne$ justifies, say, the absence of a table in front of me. As we will observe later $\Atop$ proves (or justifies) every theorem, and thus $\Abot$ proves (or justifies) the negation of theorems.

Note that we use constants from $\CTerms$ as proofs or justifications for axioms, and since $\Abot$ does not have this role it is not considered as a proof constant. Moreover, we keep use $\cdot$ as a term  operator formalizing the modus ponens rule, while use $\odot$ as the dual of $\ttsum$. Here, the term operator $\ttsum$ is used for the combination of two justifications (as in the ordinary justification logics) as well as for the join operator in Boolean algebras of terms.

Let’s make the above argument more precise. In the following theorem we show how to construct a Boolean algebra out of the set of all  finite sequences of formulas of a given logic $L$.

\begin{theorem}\label{thm: Generic Example of term Boolean algebra}
	Given a logic $L$ formulated as a Hilbert system, there is a Boolean algebra associated to the proofs of $L$.
\end{theorem}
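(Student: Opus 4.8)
The plan is to realize the proofs of $L$ inside a genuine field of sets, so that the Boolean operations are the honest set-theoretic ones and the algebra axioms come for free. First I would fix the Hilbert presentation of $L$ and let $\Sigma$ be the set of all finite sequences of formulas of $L$, that is, the candidate derivations. To each $\sigma \in \Sigma$ I associate the set $\mathrm{Pr}(\sigma) \subseteq \Formulae$ of those formulas that occur in $\sigma$ as correctly justified lines, i.e. the formulas that $\sigma$ proves. Since every line of a premise-free Hilbert derivation is derivable from the axioms by its own initial segment, each $\mathrm{Pr}(\sigma)$ is a finite set of theorems of $L$, and conversely every finite set of theorems is $\mathrm{Pr}(\sigma)$ for a suitable $\sigma$ obtained by concatenating individual derivations.

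Next I would take the Boolean algebra to be the subalgebra of the power set algebra $\mathfrak{P}(\Formulae) = (\powerset(\Formulae), \emptyset, \setminus, \cup)$ generated by the family $\{ \mathrm{Pr}(\sigma) \mid \sigma \in \Sigma \}$. Because any subalgebra of a Boolean algebra is again a Boolean algebra, no algebraic law has to be checked by hand: the bottom element is $\emptyset$, complement is relative complement in $\Formulae$, and join is $\cup$. Under this reading the informal slogan that $s \ttsum t$ proves everything proved by $s$ or by $t$ is literally the statement $\mathrm{Pr}(\sigma) \cup \mathrm{Pr}(\tau) = \mathrm{Pr}(\sigma\tau)$, where $\sigma\tau$ is the concatenation; thus concatenation of derivations witnesses the join and the empty derivation witnesses the bottom. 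This exhibits a Boolean algebra canonically associated to the proofs of $L$, with the proofs themselves sitting inside it as the distinguished generators $\mathrm{Pr}(\sigma)$.

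The step I expect to be the main obstacle is the complement. The family $\{\mathrm{Pr}(\sigma)\}$ of provable sets is closed under union but not under complement, and the informal gloss of $-t$ as ``proving the negation of everything $t$ proves'' does not agree with the genuine Boolean complement $\Formulae \setminus \mathrm{Pr}(t)$; the two simply need not satisfy $a \ttsum (-a) = \Atop$. The resolution, which is the real content of the construction, is precisely to pass to the \emph{generated} subalgebra rather than to work with the provable sets alone: closing under $\setminus$ and $\cup$ produces an honest Boolean algebra (concretely, the algebra of finite subsets of the theorem set together with their complements), while still keeping the derivations of $L$ as a spanning family. I would therefore present the verification of the Boolean laws as immediate from the subalgebra property, and spend the argument instead on fixing the map $\sigma \mapsto \mathrm{Pr}(\sigma)$ and on explaining why closure under complement, rather than the naive negation operation, is the correct choice.
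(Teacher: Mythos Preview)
Your construction is technically sound---a generated subalgebra of a power set algebra is certainly Boolean---but it is a genuinely different object from the paper's. The paper takes single-conclusion derivations $\sigma_\phi^\Gamma$ and quotients by provable equivalence of the \emph{conclusion}, setting $[\sigma_\phi^\Gamma] = \{\tau_\psi^\Delta \mid {}\vdash_L \phi \leftrightarrow \psi\}$, with $0 = [\langle\bot\rangle]$, $-[\sigma_\phi^\Gamma] = [\langle\neg\phi\rangle]$, and $[\sigma_\phi^\Gamma] + [\tau_\psi^\Delta] = [\langle\phi\vee\psi\rangle]$; the result is essentially the Lindenbaum--Tarski algebra of $L$, indexed by proofs rather than formulas. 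The payoff is that the Boolean complement literally \emph{is} ``a proof of the negation,'' which is precisely the reading of $-t$ the surrounding section is trying to motivate, and every element of the algebra is (the class of) an actual proof. Your field-of-sets route trades this semantic alignment for algebraic triviality: the Boolean laws come for free, but (as you acknowledge) the set complement $\Formulae \setminus \mathrm{Pr}(\sigma)$ has nothing to do with negation, and most elements of your algebra correspond to no proof at all. Both arguments establish the bare statement, but only the quotient construction supports the subsequent definitions of $\tapp_\mathsf{Pr}$ and $\tinspect_\mathsf{Pr}$ and the motivation for $\LPBool$. (Incidentally, your claim that every finite set of theorems equals some $\mathrm{Pr}(\sigma)$ is false---a non-axiom theorem cannot be the sole line of a derivation---though this does not damage your construction.)
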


\begin{proof}
	Consider a logic $L$ formulated as a Hilbert system. 
	A proof of $\phi$ from a set of assumptions $\Gamma$ in the logic $L$ is a finite sequence of formulas, say $\langle \phi_1, \phi_2,\ldots,\phi_n \rangle$, such that each formula $\phi_i$ in the sequence is either an axiom instance of $L$ or is in $\Gamma$ or is obtained from previous formulas using rules, and in addition $\phi_n = \phi$. We consider only single-conclusion proofs here, i.e. every proof is a proof of only one formula. Let $\mathsf{Pr}$ be the set of all proofs from assumptions in $L$. 
	Given $\sigma \in \mathsf{Pr}$, we write  $\sigma_\phi^\Gamma$ if $\sigma$ is a proof of $\phi$ using the set of assumptions $\Gamma$. Note that the sequence $\langle \phi \rangle$ denotes a proof of $\phi$ from the assumption $\phi$ (i.e. a proof for the sequent $\phi \vdash_L \phi$).  For $\sigma_\phi^\Gamma \in \mathsf{Pr}$, let 
	\[
	\OCl{\sigma_\phi^\Gamma} := \{  \tau_\psi^\Delta \in \mathsf{Pr} \mid \vdash_L \phi \liff \psi  \}.
	\]

	Let $\OCl{\mathsf{Pr}} := \{ \OCl{\sigma_\phi^\Gamma} \mid \sigma_\phi^\Gamma \in \mathsf{Pr} \}$. Now we define the Boolean operators on $\OCl{\mathsf{Pr}}$ as follows:
	\begin{align*}
		0_\mathsf{Pr} &:= \OCl{\langle \bot \rangle},\\
		-_\mathsf{Pr} \OCl{\sigma_\phi^\Gamma} &:= \OCl{\langle \neg \phi \rangle}, \\
		\OCl{\sigma_\phi^\Gamma} \ttsum_\mathsf{Pr} \OCl{\tau_\psi^\Delta} &:= \OCl{\langle \phi \vee \psi \rangle}.
	\end{align*}
	
	Then, one can observe that $1_\mathsf{Pr}$ is $\OCl{\langle \top \rangle}$, and $\OCl{\sigma_\phi} \odot_\mathsf{Pr} \OCl{\tau_\psi}$ is  $\OCl{\langle \phi \wedge \psi \rangle}$.\footnote{Another intuitive way to define $\ttsum_\mathsf{Pr}$ is $\OCl{\sigma_\phi^\Gamma} \ttsum_\mathsf{Pr} \OCl{\tau_\psi^\Delta} := \OCl{\pi_{\phi \vee \psi}^{\Gamma \cup \Delta}}$, where $\pi_{\phi \vee \psi}^{\Gamma \cup \Delta} = \sigma_\phi^\Gamma \sharp \tau_\psi^\Delta \sharp \langle \psi \limplies \phi \vee \psi, \phi \vee \psi \rangle$ and $\sharp$ denotes the concatenation of sequences. Since $\OCl{\pi_{\phi \vee \psi}^{\Gamma \cup \Delta}} =  \OCl{\langle \phi \vee \psi \rangle}$, the two definitions are equivalent.}
	
	Now it is straightforward to show that $\Algebra_\mathsf{Pr} = (\OCl{\mathsf{Pr}}, 0_\mathsf{Pr}, -_\mathsf{Pr}, \ttsum_\mathsf{Pr})$ is a Boolean algebra. 	\qed
	
\end{proof}

In the above proof note that if $\psi$ is an axiom of $L$, then $\OCl{\langle \psi \rangle}$ plays a role similar to a proof constant, and if $\chi$ is a formula which is not a theorem of $L$, then $\OCl{\langle \chi \rangle}$ plays the same role as a proof variable which has been already replaced by a proof of $\chi$ (from the assumption $\chi$). The Boolean algebra $\Algebra_\mathsf{Pr}$ gives a \textit{generic} example of Boolean algebras on terms in the sense that for every $L$ (formulated as a Hilbert system) one can construct a Boolean algebra associated to the proofs of $L$. 

\begin{remark}
	We continue the proof of Theorem \ref{thm: Generic Example of term Boolean algebra} by defining the term operators $\tapp$ and $\tinspect$ on $\Algebra_\mathsf{Pr}$. To this end we assume that the logic $L$ satisfies the Lifting Lemma \ref{lem: Lifting}. Define $\tapp_\mathsf{Pr}$ as follows:
	\[
	\OCl{\sigma_\phi^\Gamma} \tapp_\mathsf{Pr} \OCl{\tau_\psi^\Delta} :=
	\begin{cases}
		\OCl{\omega_{\phi_2}^{\Gamma \cup \Delta}} & \mbox{ if $\phi = \phi_1 \limplies \phi_2$ and $\psi = \phi_1$} \\
		0_\mathsf{Pr} & \mbox{ Otherwise.}
	\end{cases} 
	\]
	In order to define $\tinspect_\mathsf{Pr}$ we use the Lifting Lemma. Suppose that $\Gamma \vdash \phi$ and $\sigma$ denotes this proof. By the Lifting Lemma we get 
	\[
	\vec{\justVarOne}: \Gamma \vdash t(\vec{\justVarOne}):\phi,
	\]
	for some term $t(\vec{\justVarOne}) \in \Terms$   (here by $\vec{\justVarOne}: \Gamma$ we mean $\justVarOne_1: \psi_1, \ldots, \justVarOne_n: \psi_n$ where $\Gamma = \{ \psi_1, \ldots, \psi_n \}$). Now one possible definition of $\tinspect_\mathsf{Pr}$ is as follows:
	\[
		\tinspect_\mathsf{Pr} \OCl{\sigma_\phi^\Gamma} := \OCl{\tau_{\jbox{t(\vec{\justVarOne})}\ \phi}^{\vec{\justVarOne}: \Gamma}},
	\]
	where $\tau$ denotes the proof of the sequent $\vec{\justVarOne}: \Gamma \vdash t(\vec{\justVarOne}):\phi$.
	For example, if $\sigma = \langle p, p \limplies p \vee q, q \rangle$, then 
	\[
	\tinspect_\mathsf{Pr} \OCl{\sigma} = \OCl{ \langle \justVarOne: p, \justConsThree:(p \limplies p \vee q), \justConsThree \tapp \justVarOne: q \rangle },
	\]
	where $\justVarOne \in \VTerms$ and $\justConsThree \in \CTerms$.

\end{remark}

Since we have a Boolean algebra on proof terms, we have an order $\leq$ on proof terms. For $s, t \in \TermsBool$:
\begin{center}
	$s \leq t$ iff $s \ttsum t \approx t$.
\end{center}
For example, in the algebra $\Algebra_\mathsf{Pr}$ we have
\begin{align*}
	\OCl{\sigma_\phi^\Gamma} \leq \OCl{\tau_\psi^\Delta} \quad & \text{ iff } \quad \OCl{\sigma_\phi^\Gamma} \ttsum_\mathsf{Pr} \OCl{\tau_\psi^\Delta} = \OCl{\tau_\psi^\Delta} \\
	& \text{ iff } \quad \OCl{\langle \phi \vee \psi \rangle} = \OCl{\tau_\psi^\Delta} \\
	& \text{ iff } \quad \vdash_L \phi \vee \psi \liff \psi \\
	& \text{ iff } \quad \vdash_L \phi \limplies \psi.
\end{align*}

Next we present an axiomatic formulation for $\LPBool$. The set of axioms of $\LPBool_\emptyset$ is an extension of that of $\LP_\emptyset$ with  the following  axioms of the Boolean algebra for terms:
\begin{description}
	\item[B1.] $s \ttsum t \lequal t \ttsum s$ \hfil $s \odot t \lequal t \odot s$
	
	\item[B2.] $s\ttsum(t\ttsum u)\lequal(s\ttsum t)\ttsum u$ \hfil $s\odot(t\odot u)\lequal(s\odot t)\odot u$
	
	\item[B3.] $s\ttsum\Abot\lequal s$ \hfil $s\odot\Atop\lequal s$
	
	\item[B4.] $s\ttsum(-s)\lequal\Atop$ \hfil $s\odot(-s)\lequal\Abot$
	
	\item[B5.] $s\ttsum(t\odot u)\lequal(s\ttsum t)\odot(s\ttsum u)$ \hfil $s\odot(t\ttsum u)\lequal(s\odot t)\ttsum(s\odot u)$
\end{description}

and the following axioms for equality: 
\begin{description}
	\item[Eq1.] $t \lequal t$
	
	\item[Eq2.] $s \lequal t \wedge \phi[x/s] \limplies \phi[x/t]$, 
	
\end{description}
where $\phi[x/s]$ denotes the result of substitution of $s$ for $x$ in $\phi$.

The definition of constant specification and axiomatically appropriate constant specification is similar to that given in Section \ref{sec: LP-CS algebras} (just let $\JL$ be $\LPBool$). Given a constant specification $\CS$ for $\LPBool$, the axiomatic system $\LPBool_\CS$ is obtained by adding the formulas of $\CS$ as new axioms to the axiomatic system $\LPBool_\emptyset$. 
In addition, the rules of $\LPBool_\CS$ are $MP$ and the following rule:
\begin{description}
	\item[Int.] $$\frac{\phi}{\Atop:\phi}$$
\end{description}
The rule $Int$ says that $\Atop$ is a proof of every theorem.\footnote{Another possible interpretation of $\Atop$ is to consider it as a justification of everything, namely a \textit{universal justification} for every proposition. Regarding this interpretation, the rule $Int$ should be replaced by the rule $\frac{}{\Atop:\phi}$.} In this respect, $\Atop$ can be called a \textit{universal proof}. The rule $Int$ is enough to show that the internalization property holds in $\LPBool_\CS$ for arbitrary (not necessarily axiomatically appropriate) constant specification $\CS$.

\begin{lemma}[Internalization via the universal proof]
	Given an arbitrary constant specification $\CS$, if $\vdash_{\LPBool_\CS} \phi$, then  $\vdash_{\LPBool_\CS} \Atop:\phi$.
\end{lemma}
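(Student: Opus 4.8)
The plan is to prove this by induction on the derivation of $\phi$ in $\LPBool_\CS$, mirroring the structure of the proof of Lemma~\ref{lem: Internalization} but exploiting the new rule $Int$ to handle the base cases uniformly. The key simplification compared to the classical internalization lemma is that we no longer need $\CS$ to be axiomatically appropriate: the rule $Int$ gives us $\Atop:\psi$ for free whenever $\psi$ is a theorem, so we never have to hunt for a suitable proof constant.

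First I would set up the induction on the length of the derivation of $\vdash_{\LPBool_\CS} \phi$, distinguishing cases according to how $\phi$ enters the proof. The base cases are: (i) $\phi$ is an axiom instance of $\LPBool_\emptyset$ (including the new Boolean-term axioms B1--B5 and the equality axioms Eq1--Eq2), or (ii) $\phi = \justConsThree:\psi \in \CS$. In case (i), since $\phi$ is provable we may apply the rule $Int$ directly to obtain $\vdash_{\LPBool_\CS} \Atop:\phi$, and we are done. In case (ii), $\phi$ is itself provable (it is an axiom of $\LPBool_\CS$), so again $Int$ yields $\vdash_{\LPBool_\CS} \Atop:\phi$. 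In fact, both base cases collapse into a single observation: whenever $\phi$ is an axiom of $\LPBool_\CS$, it is a theorem, and $Int$ applies.

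For the induction step, suppose $\phi$ is obtained by $MP$ from $\psi \limplies \phi$ and $\psi$, both of which are theorems of $\LPBool_\CS$ with shorter derivations. Here too the cleanest route is simply to note that $\phi$ itself is a theorem and invoke $Int$ once more to conclude $\vdash_{\LPBool_\CS} \Atop:\phi$. I would also need to address the second rule of $\LPBool_\CS$, namely $Int$ itself: if $\phi$ was obtained as $\Atop:\chi$ by applying $Int$ to a theorem $\chi$, then again $\phi$ is a theorem and a further application of $Int$ gives $\vdash_{\LPBool_\CS} \Atop:\phi = \Atop:(\Atop:\chi)$.

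The proof is genuinely easier than the classical internalization lemma, and I do not expect a serious obstacle; indeed, the whole statement is almost immediate once one realizes that every theorem $\phi$ is eligible for the rule $Int$. The only point requiring a moment of care is to confirm that the rule $Int$ is unconditional in its present form (it fires on any derived theorem, not merely on axioms), so that no induction on the \emph{structure} of $\phi$ or on the shape of the term is actually needed. Unlike Lemma~\ref{lem: Internalization}, the resulting witnessing term is always the single constant $\Atop$ rather than a polynomial reflecting the proof structure, which is precisely why axiomatic appropriateness of $\CS$ can be dropped.
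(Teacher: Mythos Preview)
Your proposal is correct but over-engineered. The paper's proof is a single sentence: ``Follows immediately from the rule \textbf{Int}.'' Since $Int$ is a rule of inference of $\LPBool_\CS$ that derives $\Atop:\phi$ from any theorem $\phi$, one application of $Int$ to the given derivation of $\phi$ already yields $\vdash_{\LPBool_\CS} \Atop:\phi$; no induction on the derivation is needed. You effectively notice this yourself in your final paragraph (``the whole statement is almost immediate once one realizes that every theorem $\phi$ is eligible for the rule $Int$''), and indeed each of your base and inductive cases collapses to exactly that single application of $Int$. So your argument is sound, just redundant: drop the induction scaffolding and keep the one-line observation.
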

\begin{proof}
Follows immediately from the rule \textbf{Int}. \qed
\end{proof}

However, in contrast to the Internalization Lemma \ref{lem: Internalization},  the structure of the proof of theorems are not reflected by the universal proof $\Atop$. Thus, we next show that the standard Internalization Lemma also holds for $\LPBool$.

\begin{lemma}[Internalization] \label{lem: Internalization for LPBool}
	Suppose that $\CS$ is an axiomatically appropriate constant specification for $\LPBool$. If $\vdash_{\LPBool_\CS} \phi$, then there is a term $t \in \TermsBool$ such that $\vdash_{\LPBool_\CS} t:\phi$.
\end{lemma}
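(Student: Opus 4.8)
The plan is to mimic the proof of the original Internalization Lemma~\ref{lem: Internalization}, proceeding by induction on the length of a derivation of $\phi$ in $\LPBool_\CS$, but with extra base cases to account for the new axioms (the Boolean term axioms B1--B5 and the equality axioms Eq1, Eq2) and the extra rule $Int$. Since $\CS$ is axiomatically appropriate for $\LPBool$, every axiom instance of $\LPBool_\emptyset$ --- including the new algebraic and equational axioms --- is witnessed by some proof constant $\justConsThree$ with $\justConsThree:\phi \in \CS$, so for all axiom base cases I would simply set $t := \justConsThree$. For the case $\phi = \justConsThree:\psi \in \CS$, I would apply axiom j4 together with MP to obtain $\vdash_{\LPBool_\CS} {!\justConsThree} : \justConsThree : \psi$ and set $t := {!\justConsThree}$, exactly as before.

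The inductive step for MP is unchanged: if $\phi$ follows from $\psi \limplies \phi$ and $\psi$ by MP, then by the induction hypothesis there are terms $r$ and $s$ with $\vdash_{\LPBool_\CS} r:(\psi \limplies \phi)$ and $\vdash_{\LPBool_\CS} s:\psi$, and using axiom Appl together with MP one gets $\vdash_{\LPBool_\CS} r \tapp s : \phi$, so I set $t := r \tapp s$. The genuinely new inductive case is the rule $Int$: here $\phi = \Atop:\chi$ is derived from a shorter derivation of $\chi$. Applying the induction hypothesis to $\chi$ yields a term $r$ with $\vdash_{\LPBool_\CS} r:\chi$, i.e.\ $\vdash_{\LPBool_\CS} \Atop:\chi$ is justified by some $r$; one then needs a term justifying $\Atop:\chi$ itself. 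The natural candidate is $t := {!\Atop}$ or, more safely, to re-run the argument using axiom j4 on whatever witness is available, so that $t$ internalizes the fact that $\Atop:\chi$ is itself a theorem.

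The step I expect to be the main obstacle is precisely this handling of the $Int$ rule. Internalizing an application of $Int$ requires producing a term whose colon-statement is $\Atop:\chi$; the cleanest route is to observe that $\Atop:\chi$ has itself been derived (via $Int$ from $\chi$), so if by the induction hypothesis $\vdash_{\LPBool_\CS} r:\chi$, then applying $Int$ to the theorem $r:\chi$ --- or more directly applying axiom j4 in the form $r:\chi \limplies {!r}:r:\chi$ --- does not immediately give $\Box_{\,\cdot\,}$ of $\Atop:\chi$ with the correct term. I would therefore argue that since $\vdash_{\LPBool_\CS} \Atop : \chi$ is a theorem, $Int$ gives $\vdash_{\LPBool_\CS} \Atop : (\Atop : \chi)$, and then reconcile $\Atop$ with the structured witness using the term Boolean axioms, which force $\Atop$ to dominate every term in the order $\leq$ on $\TermsBool$; concretely, from any witness $r$ with $\vdash r:\chi$ one derives $\vdash \Atop:\chi$ and hence, by $Int$ again, a witness ${!\Atop}$ (or an application term built from it) for $\Atop:\chi$. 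Verifying that this yields an honest term of $\TermsBool$ and that each new axiom scheme really is covered by the axiomatic appropriateness of $\CS$ is the place where care is needed, but no new technique beyond those in Lemma~\ref{lem: Internalization} is required.
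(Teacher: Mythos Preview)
Your overall structure is correct and matches the paper's: induction on the derivation, with the base cases (axioms of $\LPBool_\emptyset$, including B1--B5 and Eq1--Eq2, and formulas from $\CS$) and the MP case handled exactly as in Lemma~\ref{lem: Internalization}. The only point where your proposal diverges from the paper is the treatment of the rule $Int$, and there you make the argument considerably harder than it needs to be.

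For the $Int$ case, the paper simply observes that the formula to be internalized is $\phi = \Atop:\psi$, which is already of the shape $t:\psi$ with $t = \Atop$. Since $\vdash_{\LPBool_\CS} \Atop:\psi$, axiom j4 gives $\Atop:\psi \limplies {!\Atop}:\Atop:\psi$, and by MP one obtains $\vdash_{\LPBool_\CS} {!\Atop}:\Atop:\psi$; hence $t := {!\Atop}$. No appeal to the induction hypothesis on $\psi$ is needed, and no ``reconciling'' of $\Atop$ with a structured witness via the Boolean term axioms is involved. Your detour through the induction hypothesis (obtaining some $r$ with $\vdash r:\chi$) is harmless but irrelevant: the witness $r$ plays no role in producing a justification for $\Atop:\chi$. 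Likewise, your alternative suggestion of applying $Int$ a second time to get $\vdash \Atop:(\Atop:\chi)$ would also work (with $t := \Atop$), but note that this yields the witness $\Atop$, not ${!\Atop}$ as you wrote; the paper opts for the j4 route because it parallels the treatment of the $\CS$ base case.

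In short: your proof is correct in outline, and you even name the right witness ${!\Atop}$, but the justification you give for it is tangled. The clean one-line argument is ``apply j4 to the theorem $\Atop:\psi$.''
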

\begin{proof}
	The proof is similar to that of Lemma \ref{lem: Internalization}. The only new case is when $\phi$ is obtained by the rule $Int$. Suppose that $\phi = \Atop : \psi$ is obtained by the rule $Int$ from $\psi$. Then, using axiom $j4$ and $MP$, we get $\tinspect \Atop :  \Atop : \psi$. Thus, put $t := \tinspect \Atop$. \qed
\end{proof}

\begin{lemma}
	The following rule is admissible in $\LPBool_\CS$:
	
	 $$\frac{s \lequal t}{s:\phi \leftrightarrow t:\phi}\ EqTm$$
\end{lemma}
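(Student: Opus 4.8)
The statement to prove is that the rule
\[
\frac{s \lequal t}{s:\phi \liff t:\phi}\ EqTm
\]
is admissible in $\LPBool_\CS$. The plan is to derive this directly from the equality axioms already available in the system, treating $s:\phi$ as a formula in which the term $s$ occurs, and then invoking the substitution axiom $Eq2$. First I would observe that $s:\phi$ is literally of the form $\chi[x/s]$ where $\chi$ is the formula $x:\phi$ (here $x$ is a term variable and $\chi$ results from placing $x$ in the proof-term position). Consequently $\chi[x/t]$ is exactly $t:\phi$. This reduces the whole problem to an instance of the schema $Eq2$.

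The key steps, carried out in order, are as follows. Suppose $\vdash_{\LPBool_\CS} s \lequal t$. Taking $\chi := x:\phi$, axiom $Eq2$ gives us the instance $s \lequal t \wedge (s:\phi) \limplies (t:\phi)$. From the hypothesis $s \lequal t$ together with propositional reasoning we then obtain $\vdash_{\LPBool_\CS} s:\phi \limplies t:\phi$. For the converse direction I would use the symmetry of $\lequal$: from $s \lequal t$ one derives $t \lequal s$ (symmetry follows from $Eq1$ and $Eq2$ by the standard Leibniz-style argument, taking the formula $x \lequal s$ and substituting), and then the same instance of $Eq2$ with the roles of $s$ and $t$ exchanged yields $\vdash_{\LPBool_\CS} t:\phi \limplies s:\phi$. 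Combining the two implications by propositional logic produces $\vdash_{\LPBool_\CS} s:\phi \liff t:\phi$, which is the desired conclusion.

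The main obstacle I anticipate is purely bookkeeping about substitution: one must check that the term position inside $s:\phi$ is genuinely a legitimate substitution site for $Eq2$, i.e. that $x:\phi$ is a well-formed formula of $\LPBool$ with $x$ a term variable and that substituting $s$ or $t$ for $x$ reproduces $s:\phi$ and $t:\phi$ without unintended variable capture inside $\phi$. Since the grammar of $\LPBool$ admits $\jbox{t}\phi$ for any term $t$, and $\phi$ itself is held fixed (the substitution only touches the outer proof-term slot), capture is not an issue here and the substitution is clean. The only other small point is establishing symmetry of $\lequal$ as a derived principle, but this is the textbook consequence of reflexivity ($Eq1$) and Leibniz replacement ($Eq2$) and needs only be cited rather than belabored.
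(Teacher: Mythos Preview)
Your proposal is correct and follows exactly the approach the paper takes: the paper's entire proof is the one-line remark ``Follows from axiom Eq2,'' and you have simply spelled out the instance $\chi := x:\phi$ and the symmetry step that make this work.
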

\begin{proof}
	Follows from axiom Eq2. \qed
\end{proof}

We immediately get the following result.

\begin{lemma}
	The following are theorems of $\LPBool_\CS$:
	\begin{enumerate}
		\item $s \ttsum t : \phi \liff t \ttsum s : \phi$ \hfil $s \odot t : \phi \liff t \odot s : \phi$
		
		\item $s\ttsum(t\ttsum u) : \phi \liff (s\ttsum t)\ttsum u : \phi$ \hfil $s\odot(t\odot u) : \phi \liff (s\odot t)\odot u :\phi $
		
		\item $s\ttsum \Abot : \phi \liff s : \phi$ \hfil $s\odot\Atop : \phi \liff s : \phi$
		
		\item  $s\ttsum(-s) : \phi \liff \Atop : \phi$ \hfil $s\odot(-s) : \phi \liff \Abot : \phi$	
		
		\item $s\ttsum(t\odot u) : \phi \liff (s\ttsum t)\odot(s\ttsum u) : \phi$ \hfil $s\odot(t\ttsum u) : \phi \liff (s\odot t)\ttsum(s\odot u) : \phi$	
		
	\end{enumerate}
\end{lemma}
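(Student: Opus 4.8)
The plan is to observe that each of the ten theorems is an immediate consequence of the corresponding Boolean algebra axiom for terms (B1--B5) together with the admissible rule $EqTm$ just established. The key point is that every item has the shape $s:\phi \liff t:\phi$, where $s \lequal t$ is exactly one half of an axiom instance of B1--B5.

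First I would note that each axiom among B1--B5 asserts an equality $s \lequal t$ between two specific terms. For instance, the left column of B1 gives $\vdash_{\LPBool_\CS} s \ttsum t \lequal t \ttsum s$, the left column of B3 gives $\vdash_{\LPBool_\CS} s \ttsum \Abot \lequal s$, and so on through all ten equalities packaged in B1--B5. Since these are axioms, they are theorems of $\LPBool_\CS$ for arbitrary terms $s,t,u \in \TermsBool$.

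Next, for each such term equality $r \lequal r'$, I would apply the rule $EqTm$, which yields $\vdash_{\LPBool_\CS} r:\phi \liff r':\phi$ for every formula $\phi$. Matching each axiom instance to the corresponding item in the lemma then gives the result directly: item~1 follows from B1, item~2 from B2, item~3 from B3, item~4 from B4, and item~5 from B5, with the two columns in each case coming from the two columns of the respective Boolean axiom. Thus the proof is just ten applications of $EqTm$ to ten axiom instances.

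There is essentially no obstacle here, since the heavy lifting was done in establishing that $EqTm$ is admissible (which in turn rests on axiom Eq2). The only thing to be careful about is ensuring that $EqTm$ really applies to arbitrary term equalities derivable in $\LPBool_\CS$, not merely to equalities posited as hypotheses; but since $EqTm$ is stated as an admissible rule taking a derivable premise $s \lequal t$ to the conclusion, and the equalities B1--B5 are themselves derivable (being axioms), this is immediate. Hence the lemma follows at once.

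\begin{proof}
	Each of the ten equivalences is obtained by applying the admissible rule $EqTm$ to the corresponding instance of the Boolean algebra axioms for terms. Indeed, for arbitrary $s,t,u \in \TermsBool$, axioms B1--B5 yield the term equalities listed in their respective columns, e.g. $\vdash_{\LPBool_\CS} s \ttsum t \lequal t \ttsum s$ by B1 and $\vdash_{\LPBool_\CS} s \ttsum \Abot \lequal s$ by B3. Applying $EqTm$ to each such equality gives $\vdash_{\LPBool_\CS} r:\phi \liff r':\phi$ whenever $\vdash_{\LPBool_\CS} r \lequal r'$. Matching the two columns of B$n$ to the two columns of item~$n$ establishes all five items. \qed
\end{proof}
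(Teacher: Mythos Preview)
Your proposal is correct and follows exactly the same approach as the paper: the paper's proof simply states that the result follows from the Boolean algebra axioms B1--B5 together with the admissible rule $EqTm$, which is precisely what you argue in detail.
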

\begin{proof}
	Follows from the axioms of  Boolean algebra, $B1$-$B5$, and the admissible rule  $EqTm$. \qed
\end{proof}

\subsection{Full $\LPBool_\CS$ algebras}

Next we set semantics for the logic $\LPBool_\CS$. Unlike the full $\LP_\CS$ algebras that have been defined over the set of proof terms $\Terms$, here we define  algebras over certain extensions of a Boolean algebra. We begin with a Boolean algebra $(T, 0_\T, -_\T, \ttsum_\T)$, and then we extend it by the operators $\tapp_\T : T \times T \to T$ and $\tinspect_\T :T \to T$ to $\T$. The members of $T$ are considered as denotations of proof terms. Then, for every $\alpha \in T$ we have a function $\OBox{\alpha} : \FormulaeBool \to A$ in the algebra. Thus, the difference, as compared to full $\LP$ algebras in Section \ref{sec: LP-CS algebras}, is that now for every member of $\alpha \in T$ we have a function $\OBox{\alpha}$ in the algebra. As opposed to full $\LP$ algebras in which the number of functions $\OBox{t}$, for $t \in \Terms$, were countably infinite, since $T$ may be finite, the number of functions $\OBox{\alpha}$ may be finite as well.

\begin{definition}\label{def: pre-LPB-CS algebra}
	Let $\T = (T, 0_\T, -_\T, \ttsum_\T,  \tapp_\T,  \tinspect_\T)$, where $(T, 0_\T, -_\T, \ttsum_\T)$ is a Boolean algebra, and $\tapp_\T$ and $\tinspect_\T$ are two operators on $T$ such that 
	$$
	\tapp_\T : T \times T \to T \quad \text{ and } \quad \tinspect_\T :T \to T.
	$$
	Given a constant specification $\CS$ for $\LPBool$, a pre $\LPBool_\CS$ algebra is a tuple $\Algebra = (A, \AbotElement, \Anot, \Ajoin, \OBox{\alpha}, I)_{\alpha \in T}$ such that $(A, \AbotElement, \Anot, \Ajoin)$ is a Boolean algebra, $I: \CTerms \cup \{ \Abot \} \to T$ is an interpretation such that 
	$$
	I(\Abot)=0_{\T},$$
	and operators $\OBox{\alpha} : \FormulaeBool \to A$, for $\alpha \in T$,  satisfy the following conditions. For all $\AElementOne , \AElementTwo  \in A$, and all $\alpha, \beta \in T$:
	\begin{description}
		
		\item[Al-Appl-$\LPBool_\CS$.] $\OBox{\alpha}(\phi \limplies \psi) \Ameet \OBox{\beta}(\phi) \leq \OBox{\alpha \tapp_{\T} \beta}(\psi)$,
		
		\item[Al-Sum-$\LPBool_\CS$.] $\OBox{\alpha}(\phi) \Ajoin \OBox{\beta}(\phi) \leq \OBox{\alpha \ttsum_{\T}  \beta}(\phi)$,
		
		
		
		\item[Al-$\Atop$--$\LPBool_\CS$] $\OBox{1_\T}(\phi) = \AtopElement$, where $\vdash_{\LPBool_\CS} \phi$,
		
		\item[Al-CS-$\LPBool_\CS$] $\OBox{I(\justConsThree)}(\phi) = \AtopElement$, where $\justConsThree:\phi \in \CS$.
		
	\end{description}
	
\end{definition}

\begin{definition}\label{def: t^I_v Section 5}
Given $\T = (T, 0_\T, -_\T, \ttsum_\T,  \tapp_\T,  \tinspect_\T)$ and a function $v: \VTerms \to T$, for every $t \in \TermsBool$ define $t^{v}_{I}$ as follows:
\begin{description}
	\item[1.] $\justVarOne^{v}_{I}:=v(\justVarOne)$ for $\justVarOne \in\VTerms$ and $\justConsThree^{v}_{I}:=I(\justConsThree)$ for $\justConsThree \in \CTerms \cup \{ \Abot \}$,
	
	\item[2.] $(s \ast t)^{v}_{I}:= (s^{v}_{I}) \ast_{\T} (t^{v}_{I})$ where $\ast\in\{+,\cdot\}$, 
	
	\item[3.] $(\ast  s)^{v}_{I}:= \ast_{\T} (s^{v}_{I})$ where $\ast\in\{-,!\}$.
	
\end{description}
Note that $t^{v}_{I} \in T$, for every $t \in \TermsBool$.
\end{definition}

\begin{definition}
	A valuation is a function $\Val : \Prop \to A$. Given a function $v: \VTerms \to T$, the assignment $\BIValExt : \FormulaeBool \to A$ on $\Algebra$ is defined as follows:

	\begin{align*}
	\BIValExt(p) &= \Val(p)\\
	\BIValExt(\bot) &= \AbotElement\\
	\BIValExt(\neg\phi) &= \Anot\BIValExt(\phi)\\
	\BIValExt(\phi \vee \psi) &= \BIValExt(\phi) \Ajoin \BIValExt(\psi) \\
	\BIValExt(t: \phi) &= \OBox{t^{v}_{I}}(\phi) \\
	\BIValExt(s \lequal t) & = 
	\begin{cases}
	\AtopElement & \text{ if } s^{v}_{I} = t^{v}_{I}\\
	\AbotElement & \text{ if } s^{v}_{I} \neq t^{v}_{I}.
	\end{cases} 
	\end{align*}
	
\end{definition}

\begin{definition}\label{def: full LPB-CS algebra}
	A full $\LPBool_\CS$ algebra $\Algebra = (A, \AbotElement, \Anot, \Ajoin, \OBox{\alpha}, I)_{\alpha \in T}$ is a pre $\LPBool_\CS$ algebra that satisfies the following condition. For all $\phi \in \FormulaeBool$, all $t \in \TermsBool$, all $\alpha \in T$, all functions $v: \VTerms \to T$, and  all valuation $\Val: \Prop \to A$:
	\begin{description}
		\setlength\itemsep{0.1cm}
		
		\item[Al-j4-$\LPBool_\CS$.] $\OBox{\alpha}(\phi) \leq \OBox{\tinspect_{\T} \alpha} (t:\phi)$,
		
		\item[Al-jT-$\LPBool_\CS$.] $\OBox{\alpha}(\phi) \leq \BIValExt(\phi).$
		
	\end{description}
	The class of all full $\LPBool_\CS$ algebras with singleton $\nabla  = \{\AtopElement\}$ is denoted by $\Algebra_{\LPBool_\CS}^{\sf full}$.
\end{definition}

\begin{definition}[Validity]
	Let $\Algebra = (A, \AbotElement, \Anot, \Ajoin, \OBox{\alpha}, I)_{\alpha \in T}$ be an $\LPBool_\emptyset$ algebra (with $\nabla  = \{\AtopElement\}$) and $\phi \in \FormulaeBool$.
	\begin{itemize}
		\setlength\itemsep{0.1cm}
		
		\item A formula $\phi$ is true in the algebra $\Algebra$ under the valuation $\theta$, denoted by $\Algebra \models_\theta \phi$, if $\BIValExt(\phi) = \AtopElement$, for every function $v: \VTerms \to T$. 
		
		\item A formula $\phi$ is true in the algebra $\Algebra$, denoted by $\Algebra \models \phi$, if $\Algebra \models_\theta \phi$ for every valuation $\Val$ on $\Algebra$. 
		
		\item A formula $\phi$ is valid in the class $\Algebra_{\LPBool_\CS}^{\sf full}$, denoted by $\Algebra_{\LPBool_\CS}^{\sf full} \models \phi$, if $\Algebra \models \phi$ for every algebra $\Algebra \in \Algebra_{\LPBool_\CS}^{\sf full}$.
	\end{itemize}
\end{definition}

\subsection{Completeness of $\LPBool_\CS$}

In order to prove completeness of $\LPBool_\CS$ we construct two Tarski-Lindenbaum algebras; one out of $\FormulaeBool$ and the other out of $\TermsBool$. 

\begin{definition}\label{def: Tarski-Lindenbaum algebra for LPBool_emptyset}
	Given $\phi \in \FormulaeBool$ and $t \in \TermsBool$, let 
	\[
	\OCl{\phi} := \{ \psi \in \FormulaeBool \mid \vdash_{\LPBool_\CS} \phi \liff\psi \},
	\]
	\[
	\OCl{t} := \{ s \in \TermsBool \mid \vdash_{\LPBool_\CS} s \lequal t \}.
	\]
	First define $\T = (\OCl{\TermsBool}, 0_\T, -_\T, \ttsum_\T, \tapp_\T,  \tinspect_\T)$ as follows:
	\begin{align*}
		\OCl{\TermsBool} &:= \{ \OCl{t} \mid t \in \TermsBool \}, \\
			0_\T  &:= \OCl{\Abot}, \\
				-_\T \OCl{t}  &:= \OCl{- t}, \\
				  \OCl{s} \ttsum_\T \OCl{t} &:= \OCl{s \ttsum t}, \\
					\OCl{s} \tapp_\T \OCl{t} &:= \OCl{s \tapp t}, \\
						\tinspect_\T \OCl{t} &:= \OCl{\tinspect t}.
	\end{align*}
	The Tarski-Lindenbaum algebra for $\LPBool_\CS$ is defined as follows:	
	\[
	\Algebra_{\LPBool_\CS}^f := ( \OCl{\FormulaeBool}, \AbotElement, \Anot, \Ajoin, \OBox{\OCl{t}}, I)_{\OCl{t} \in \OCl{\TermsBool}},
	\]
	where
	\begin{align*}
		\OCl{\FormulaeBool} &:= \{ \OCl{\phi} \mid \phi \in \FormulaeBool \}, \\
		\OBox{\OCl{t}} (\phi) &:= \OCl{t:\phi}, \\
		I(\justConsThree) &:= \OCl{\justConsThree}, \qquad \mbox{ for } \justConsThree \in \CTerms \cup \{ \Abot\},
	\end{align*} 
	and $\AbotElement$, $\Anot$, and $\Ajoin$ are defined similar to Definition \ref{def: Tarski-Lindenbaum algebra for HLP}. 
	Let $\nabla =  \{ \OCl{\top} \}$. 
\end{definition}

\begin{lemma}
	The Tarski-Lindenbaum algebra $\Algebra_{\LPBool_\CS}^f$ is a pre $\LPBool_\CS$ algebra.
\end{lemma}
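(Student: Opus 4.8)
The plan is to verify that the Tarski-Lindenbaum algebra $\Algebra_{\LPBool_\CS}^f$ satisfies all the requirements of a pre $\LPBool_\CS$ algebra from Definition \ref{def: pre-LPB-CS algebra}. This splits naturally into two groups of obligations: first, that the underlying term structure $\T = (\OCl{\TermsBool}, 0_\T, -_\T, \ttsum_\T, \tapp_\T, \tinspect_\T)$ is well-defined (with $(\OCl{\TermsBool}, 0_\T, -_\T, \ttsum_\T)$ a genuine Boolean algebra), and second, that the operators $\OBox{\OCl{t}}$ together with the interpretation $I$ are well-defined and satisfy the four axioms Al-Appl, Al-Sum, Al-$\Atop$, and Al-CS. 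The Boolean structure on $\OCl{\FormulaeBool}$ and the well-definedness of $\Anot$, $\Ajoin$ follow exactly as in the earlier Tarski-Lindenbaum constructions, so I would treat those as routine.

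The main obstacle, and the part I would spend the most care on, is \emph{well-definedness} of the term operators and of $\OBox{\OCl{t}}$, because each operator is defined on equivalence classes via chosen representatives. For the term operators I would argue: if $\OCl{s} = \OCl{s'}$ and $\OCl{t} = \OCl{t'}$, then $\vdash_{\LPBool_\CS} s \lequal s'$ and $\vdash_{\LPBool_\CS} t \lequal t'$; using axioms Eq1, Eq2 (the substitution axiom for equality) together with the Boolean term axioms B1--B5, one derives $\vdash_{\LPBool_\CS} (s \ttsum t) \lequal (s' \ttsum t')$, and similarly for $-$, $\tapp$, and $\tinspect$, so that $\OCl{s \ttsum t} = \OCl{s' \ttsum t'}$ etc. The Boolean-algebra equations for $(\OCl{\TermsBool}, 0_\T, -_\T, \ttsum_\T)$ then transfer directly from B1--B5 via the same equivalence-class passage. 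For $\OBox{\OCl{t}}$, I would use the admissible rule $EqTm$: if $\OCl{t} = \OCl{t'}$, then $\vdash_{\LPBool_\CS} t \lequal t'$, hence $\vdash_{\LPBool_\CS} t:\phi \liff t':\phi$, giving $\OCl{t:\phi} = \OCl{t':\phi}$, i.e. $\OBox{\OCl{t}}(\phi) = \OBox{\OCl{t'}}(\phi)$.

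With well-definedness settled, the four pre-algebra conditions reduce to provability facts in $\LPBool_\CS$, each checked by unwinding the definitions $\OBox{\OCl{t}}(\phi) = \OCl{t:\phi}$ and the order $\OCl{\alpha} \leq \OCl{\beta} \Leftrightarrow \vdash \alpha \limplies \beta$. Concretely, Al-Appl follows from axiom Appl ($s{:}(\phi\limplies\psi) \limplies (t{:}\phi \limplies (s\tapp t){:}\psi)$), which yields $\OCl{s:(\phi\limplies\psi)} \Ameet \OCl{t:\phi} \leq \OCl{(s\tapp t):\psi}$; Al-Sum follows from axiom Sum; Al-CS follows because $\justConsThree:\phi \in \CS$ makes $\justConsThree:\phi$ an axiom, so $\OBox{I(\justConsThree)}(\phi) = \OBox{\OCl{\justConsThree}}(\phi) = \OCl{\justConsThree:\phi} = \OCl{\top} = \AtopElement$; and Al-$\Atop$ follows from the rule $Int$, since $\vdash_{\LPBool_\CS}\phi$ gives $\vdash_{\LPBool_\CS} \Atop:\phi$, whence $\OBox{1_\T}(\phi) = \OCl{\Atop:\phi} = \OCl{\top}$, using that $1_\T = \OCl{\Atop}$. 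I would close by noting that $I(\Abot) = \OCl{\Abot} = 0_\T$ holds by definition, completing all the requirements. The genuinely delicate step is the interplay of the equality axioms Eq1, Eq2 with B1--B5 in establishing that $\ttsum_\T$, $-_\T$, $\tapp_\T$, and $\tinspect_\T$ respect the equivalence classes, so I would present that verification explicitly.
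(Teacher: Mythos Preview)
Your proposal is correct and follows essentially the same approach as the paper: verify the conditions of Definition~\ref{def: pre-LPB-CS algebra} by unwinding the clause $\OBox{\OCl{t}}(\phi)=\OCl{t{:}\phi}$ and reducing each inequality to a provability fact in $\LPBool_\CS$. The paper's own proof is terser---it explicitly checks only Al-Appl-$\LPBool_\CS$ and Al-$\Atop$-$\LPBool_\CS$ and declares the rest ``simple''---whereas you additionally spell out the well-definedness of the term operators and of $\OBox{\OCl{t}}$ via Eq2 and the rule $EqTm$, which the paper leaves implicit; this extra care is appropriate and does not diverge from the paper's line of argument.
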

\begin{proof}
	We only check $Al$-$Appl$-$\LPBool_\CS$ and $Al$-$\Atop$-$\LPBool_\CS$. The proof of other cases is simple.
	\begin{itemize}
		\item Condition $Al$-$Appl$-$\LPBool_\CS$.
		\begin{align*}
			& \OBox{\OCl{s}}(\phi \limplies \psi) \Ameet \OBox{\OCl{t}}(\phi) \\ &=
			\OCl{\jbox{s} (\phi \limplies \psi)} \Ameet \OCl{\jbox{t} \phi} \\  &=
			\OCl{\jbox{s} (\phi \limplies \psi) \wedge \jbox{t} \phi} \\  &\leq
			\OCl{\jbox{s \tapp t} \psi} \\ &=  
			 \OBox{\OCl{s \tapp t}} (\psi) \\ &=
			 \OBox{\OCl{s} \tapp_\T \OCl{t}} (\psi).
		\end{align*}
	
	\item Condition $Al$-$\Atop$-$\LPBool_\CS$. Suppose that $\vdash_{\LPBool_\CS} \phi$. Then
	\[
	\OBox{\OCl{\Atop}} (\phi) =   \OCl{\Atop : \phi}  = \OCl{\top}. 
	\]
	\qed
	\end{itemize}
\end{proof}

\begin{theorem}[Soundness and completeness]\label{thm: completeness LPBool over arbitrary Boolean algebra}
	$\vdash_{\LPBool_\CS} \phi$ iff $\Algebra_{\LPBool_\CS}^{\sf full} \models \phi$.
\end{theorem}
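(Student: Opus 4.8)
The plan is to mirror the completeness proof of Theorem~\ref{thm: completeness full LP-CS algebras} but now using the two Tarski-Lindenbaum algebras introduced in Definition~\ref{def: Tarski-Lindenbaum algebra for LPBool_emptyset}: the term algebra $\T = (\OCl{\TermsBool}, 0_\T, -_\T, \ttsum_\T, \tapp_\T, \tinspect_\T)$ and the formula algebra $\Algebra_{\LPBool_\CS}^f$ built over it. Soundness is routine: one checks that each axiom of $\LPBool_\CS$ evaluates to $\AtopElement$ under every $\theta$ and $v$, that $MP$ and $Int$ preserve this, and that the term axioms $B1$--$B5$ hold because $(T, 0_\T, -_\T, \ttsum_\T)$ is a Boolean algebra, while $Eq1$, $Eq2$ hold by the definition of $\BIValExt(s \lequal t)$. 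For completeness, the main work is to show $\Algebra_{\LPBool_\CS}^f$ is a genuine full $\LPBool_\CS$ algebra and then to prove a Truth Lemma.

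First I would verify that the previous lemma already gives that $\Algebra_{\LPBool_\CS}^f$ is a \emph{pre} $\LPBool_\CS$ algebra, so it remains to check the two conditions $Al$-$j4$-$\LPBool_\CS$ and $Al$-$jT$-$\LPBool_\CS$ that upgrade it to a full algebra. For $Al$-$j4$ I would use axiom $j4$ of $\LPBool_\CS$, computing $\OBox{\OCl{t}}(\phi) = \OCl{t:\phi} \leq \OCl{!t : t:\phi} = \OBox{\tinspect_\T \OCl{t}}(t:\phi)$, and for $Al$-$jT$ I would use axiom $jT$, giving $\OBox{\OCl{t}}(\phi) = \OCl{t:\phi} \leq \OCl{\phi}$, which is $\BIValExt(\phi)$ once the Truth Lemma is in place (so these two steps are intertwined with the Truth Lemma and should be organised carefully). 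Crucially, before any of this one must confirm that the term operators $\ttsum_\T, \tapp_\T, -_\T, \tinspect_\T$ and the functions $\OBox{\OCl{t}}$ are \emph{well-defined} on equivalence classes: for the term operators this follows from the congruence properties induced by $B1$--$B5$ together with $Eq2$, and for $\OBox{\OCl{t}}$ one needs that $\vdash_{\LPBool_\CS} s \lequal t$ implies $\OCl{s:\phi} = \OCl{t:\phi}$, which is exactly the admissible rule $EqTm$.

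The heart of the argument is the Truth Lemma: with the valuation $\OVal{p} := \OCl{p}$ and the term assignment $v(\justVarOne) := \OCl{\justVarOne}$, I would prove by induction on $\phi$ that $\BIValExt(\phi) = \OCl{\phi}$. The propositional cases are standard; the modal case uses $\BIValExt(t:\phi) = \OBox{t^{v}_I}(\phi)$, so I first establish the auxiliary fact $t^{v}_I = \OCl{t}$ for every term $t$ by a parallel induction on the structure of $t$ (base cases $\justVarOne^{v}_I = v(\justVarOne) = \OCl{\justVarOne}$ and $\justConsThree^{v}_I = I(\justConsThree) = \OCl{\justConsThree}$, inductive cases matching Definition~\ref{def: t^I_v Section 5} clause by clause against the operators of $\T$), after which $\BIValExt(t:\phi) = \OBox{\OCl{t}}(\phi) = \OCl{t:\phi}$. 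The equality case requires showing $\BIValExt(s \lequal t) = \OCl{s \lequal t}$, i.e. that $s^{v}_I = t^{v}_I$ (equivalently $\OCl{s} = \OCl{t}$, i.e. $\vdash_{\LPBool_\CS} s \lequal t$) holds exactly when $\OCl{s \lequal t} = \OCl{\top}$; the forward direction is immediate from $Eq1$, and the converse needs the observation that $\vdash_{\LPBool_\CS} s \lequal t$ iff $\OCl{s} = \OCl{t}$ in the term algebra.

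Once the Truth Lemma holds, completeness follows exactly as in Theorem~\ref{thm: completeness full LP-CS algebras}: if $\not\vdash_{\LPBool_\CS} \phi$ then $\BIValExt(\phi) = \OCl{\phi} \neq \OCl{\top} = \AtopElement$, so $\phi$ fails in $\Algebra_{\LPBool_\CS}^f$ under this valuation, whence $\Algebra_{\LPBool_\CS}^{\sf full} \not\models \phi$. I expect the main obstacle to be the careful handling of the \emph{two-sorted} well-definedness: one must treat the term equivalence classes $\OCl{t}$ and the formula equivalence classes $\OCl{\phi}$ simultaneously and verify that $\OBox{\OCl{t}}$ respects \emph{both} relations at once, which is precisely where $EqTm$ (hence $Eq2$) and the Boolean term axioms $B1$--$B5$ must be invoked together, rather than the single well-definedness check that sufficed for the operators $\OBox{t}$ in the $\LP_\CS$ setting.
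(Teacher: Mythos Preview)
Your proposal follows essentially the same route as the paper's proof: the same canonical valuation $\theta(p)=\OCl{p}$ and term assignment $v(\justVarOne)=\OCl{\justVarOne}$, the same auxiliary fact $t^{v}_{I}=\OCl{t}$ proved by induction on terms, and the same Truth Lemma with the two nontrivial cases $t{:}\psi$ and $s\lequal t$ handled exactly as the paper handles them. You supply more detail than the paper does---in particular the well-definedness of $\OBox{\OCl{t}}$ via the rule $EqTm$, and the explicit verification of $Al$-$j4$ and $Al$-$jT$ needed to upgrade the pre-algebra to a full algebra---but these are elaborations of steps the paper leaves implicit rather than a different strategy.
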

\begin{proof}
	Soundness is straightforward. For completeness, define $\Val$ and $v$ as follows:
	\begin{align*}
	 	\OVal{p} & := \OCl{p}, \qquad \text{ for } p \in \Prop,\\
	 		v(\justVarOne) & := \OCl{\justVarOne}, \qquad \text{ for } \justVarOne \in \VTerms,
	\end{align*}
and let $\Algebra_{\LPBool_\CS}^f := ( \OCl{\FormulaeBool}, \AbotElement, \Anot, \Ajoin, \OBox{\OCl{t}}, I)_{\OCl{t} \in \OCl{\TermsBool}}$ be the Tarski-Lindenbaum algebra for $\LPBool_\CS$. Then, by induction on the complexity of $t$, one can show that	
\begin{equation}\label{eq: t^{v}_{I} = \OCl{t} completeness of LPBool_emptyset}
	t^{v}_{I} = \OCl{t}.
\end{equation}
Then we show the Truth Lemma:
\[
\BIValExt(\phi) = \OCl{\phi}.
\]
The proof is by induction on the complexity of $\phi$. We only check two cases:
\begin{itemize}
	\setlength\itemsep{0.1cm}
	
	\item Case $\phi = t:\psi$: 
	
	$\BIValExt(t: \phi) =  \OBox{t^{v}_{I}}(\phi) =  \OBox{\OCl{t}}(\phi) = \OCl{t:\phi}$.
	
	\item Case $\phi = s \lequal t$:
	
	$\BIValExt(s \lequal t) = \OCl{\top}$ iff $s^{v}_{I} = t^{v}_{I}$ iff $\OCl{s} = \OCl{t}$ iff $\vdash_{\LPBool_\CS} s \lequal t$ iff $\OCl{s \lequal t} = \OCl{\top}$. \\
	Thus, $\BIValExt(s \lequal t) = \OCl{s \lequal t}$. 
\end{itemize}
Again completeness follows from the Truth Lemma. \qed
\end{proof}

\subsection{Bi-representation theorem for full $\LPBool_\CS$ algebras}
\label{sec: Bi-representation theorem LPB-emptyset}

In this section, we present a counterpart of the Stone's representation theorem in this setting by means of certain isomorphisms for full $\LPBool_\CS$-algebras.

\begin{definition}
	Let $\Algebra = (A, \AbotElement, \Anot, \Ajoin, \OBox{\alpha}, I)_{\alpha \in T}$ and $\Algebra' = (A', \AbotElement', \Anot', \Ajoin', \Box'_{\alpha}, I')_{\alpha \in T'}$ be two full $\LPBool_\CS$ algebras over $\T = (T, 0_\T, -_\T, \ttsum_\T, \tapp_\T, \tinspect_\T)$ and $\T' = (T', 0_{\T'}, -_{\T'}, \ttsum_{\T'}, \tapp_{\T'}, \tinspect_{\T'})$ respectively. A bi-isomorphism  between $\Algebra$ and $\Algebra'$ is a pair of Boolean isomorphisms $(f,g)$ such that $f: (A, \AbotElement, \Anot, \Ajoin) \to (A', \AbotElement', \Anot', \Ajoin')$ and  $g: (T, 0_\T, -_\T, \ttsum_\T) \to (T', 0_{\T'}, -_{\T'}, \ttsum_{\T'})$ satisfying the following conditions:
	\begin{align*}
		f(\OBox{\alpha}(\phi)) &= \Box'_{g(\alpha)}(\phi), \qquad \mbox{ for $\alpha \in T$ and $\phi \in \FormulaeBool$, } 
		\\
		g(I(\justConsThree)) &= I'(\justConsThree), \qquad\mbox{ for $\justConsThree \in \CTerms \cup \{ \Abot\}$, } 
		\\
		g(\alpha \tapp_\T \beta) &= g(\alpha) \tapp_{\T'} g(\beta) ,  \qquad\mbox{ for $\alpha, \beta \in T$, } 
		\\
		g(\tinspect_\T \alpha) &=	\tinspect_{\T'} g(\alpha), \qquad\mbox{ for $\alpha \in T$}.
	\end{align*}
\end{definition}

\begin{definition}
	Given two set algebras $(A, \emptyset, \setminus, \cup)$ and $(B, \emptyset, \setminus, \cup)$, a full $\LPBool_\CS$ set algebra is a structure 
	$$\Algebra = (A, \emptyset, \setminus, \cup, \OBox{\alpha}, I)_{\alpha \in B}$$ 
	 over $\mathcal{B} = (B, \emptyset, \setminus, \cup, \tapp_\mathcal{B}, \tinspect_\mathcal{B})$ satisfying the conditions of Definitions \ref{def: pre-LPB-CS algebra} and \ref{def: full LPB-CS algebra}. 
\end{definition}

In contrast with the standard representation theorems we construct two isomorphisms for a given full $\LPBool_\CS$ algebra $\Algebra$ over $\T$: One for the algebra $\Algebra$ and one for the algebra $\T$. This leads to a bi-isomorphism of full $\LPBool_\CS$ algebras.

\begin{theorem}[Bi-Representation Theorem]\label{thm: Representation Theorem LPBool}
	Every full $\LPBool_\CS$ algebra is bi-isomorphic to a full $\LPBool_\CS$ set algebra.
\end{theorem}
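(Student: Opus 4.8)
The plan is to combine the Stone representation theorem with the bi-isomorphism machinery developed for full $\LPBool_\CS$ algebras, in exact analogy with the proof of the single-sorted Representation Theorem~\ref{thm: Representation Theorem LPBool LP-CS}. Given a full $\LPBool_\CS$ algebra $\Algebra = (A, \AbotElement, \Anot, \Ajoin, \OBox{\alpha}, I)_{\alpha \in T}$ over $\T = (T, 0_\T, -_\T, \ttsum_\T, \tapp_\T, \tinspect_\T)$, I would first apply Stone's theorem \emph{twice}: once to the Boolean reduct $(A, \AbotElement, \Anot, \Ajoin)$ to obtain a Boolean isomorphism $f$ onto a set algebra $(A', \emptyset, \setminus, \cup)$, and once to the Boolean reduct $(T, 0_\T, -_\T, \ttsum_\T)$ to obtain a Boolean isomorphism $g$ onto a set algebra $(B, \emptyset, \setminus, \cup)$. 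These two isomorphisms are the components of the candidate bi-isomorphism.

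Next I would transport the extra structure across $f$ and $g$ so that the target becomes a genuine full $\LPBool_\CS$ set algebra. Concretely, I would define the operators on $B$ by $\alpha' \tapp_{\mathcal{B}} \beta' := g(g^{-1}(\alpha') \tapp_\T g^{-1}(\beta'))$ and $\tinspect_{\mathcal{B}} \alpha' := g(\tinspect_\T g^{-1}(\alpha'))$, the interpretation by $I'(\justConsThree) := g(I(\justConsThree))$ for $\justConsThree \in \CTerms \cup \{\Abot\}$, and the operators on formulas by $\Box'_{\alpha'}(\phi) := f(\OBox{g^{-1}(\alpha')}(\phi))$ for $\alpha' \in B$ and $\phi \in \FormulaeBool$. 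By construction these definitions force the four defining equations of a bi-isomorphism to hold, so $(f,g)$ is a bi-isomorphism provided the target satisfies the axioms of a full $\LPBool_\CS$ algebra.

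It then remains to verify that $\Algebra' = (A', \emptyset, \setminus, \cup, \Box'_{\alpha'}, I')_{\alpha' \in B}$ over $\mathcal{B} = (B, \emptyset, \setminus, \cup, \tapp_{\mathcal{B}}, \tinspect_{\mathcal{B}})$ really is a full $\LPBool_\CS$ algebra, i.e.\ that conditions Al-Appl, Al-Sum, Al-$\Atop$, Al-CS (Definition~\ref{def: pre-LPB-CS algebra}) together with Al-j4 and Al-jT (Definition~\ref{def: full LPB-CS algebra}) transfer along the isomorphisms. For the pre-algebra conditions this is a routine computation of the kind already carried out in Lemma~\ref{lem: prestone LP-CS}: since $f$ and $g$ are order-preserving Boolean isomorphisms, each inequality $\OBox{\alpha}(\phi) \Ameet \OBox{\beta}(\phi') \leq \OBox{\alpha \tapp_\T \beta}(\psi)$ is carried by $f$ to the corresponding inequality in $A'$, using that $f$ commutes with $\Ameet$ and that the transported operators were defined exactly so as to agree with $f \circ \OBox{} \circ g^{-1}$. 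The equational conditions Al-$\Atop$ and Al-CS transfer because $f(\AtopElement) = \AtopElement'$ and because $g$ carries $I$ to $I'$ and $1_\T$ to $1_{\mathcal B}$.

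The main obstacle I anticipate is the Al-jT condition, exactly as in the single-sorted case. The subtlety is that Al-jT quantifies over all valuations $\Val$ and all functions $v$, and the assignment $\BIValExt$ on the target algebra $\Algebra'$ must be related back to the assignment on $\Algebra$. The key observation needed is that if $\BIExt$ is the assignment on $\Algebra'$ induced by a valuation $\Val'$ and a function $v'$, then $f^{-1} \circ \BIExt$ is the assignment on $\Algebra$ induced by $f^{-1} \circ \Val'$ and $g^{-1} \circ v'$ (this is shown by a straightforward induction on formula complexity, using that $f$ and $g$ commute with all the relevant operations and that $f(\Box'_{\alpha'}(\phi)) $ matches up with $\OBox{g^{-1}(\alpha')}$). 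Granting this, for any $\phi$, $\alpha' \in B$ one computes $\Box'_{\alpha'}(\phi) = f(\OBox{g^{-1}(\alpha')}(\phi)) \leq f(f^{-1}(\BIExt(\phi))) = \BIExt(\phi)$, where the inequality is Al-jT for $\Algebra$ applied to the valuation $f^{-1} \circ \Val'$ and function $g^{-1} \circ v'$, and the final equality uses that $f$ is an isomorphism. This establishes Al-jT for $\Algebra'$ and completes the proof.
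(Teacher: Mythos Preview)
Your proposal is correct and follows essentially the same approach as the paper's own proof: apply Stone's theorem to both Boolean reducts, transport the extra operators $\tapp$, $\tinspect$, $I$, and $\OBox{}$ along the resulting isomorphisms $f$ and $g$, and verify that the axioms of a full $\LPBool_\CS$ algebra are preserved. Your treatment of Al-jT---pulling back an arbitrary valuation $\Val'$ and function $v'$ on the target to $f^{-1}\circ\Val'$ and $g^{-1}\circ v'$ on $\Algebra$, and checking by induction on formulas that the assignments correspond under $f$---is exactly what the paper does (the paper phrases the correspondence as $t_{I'}^{v'} = g(t_I^v)$ and $f(\BIValExt(\phi)) = \tilde{\theta'}_{v'}(\phi)$, which is the same statement in the forward direction). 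One small remark: Al-j4 lives in Definition~\ref{def: full LPB-CS algebra} rather than among the pre-algebra conditions, but its verification is indeed routine in the same style as Al-Appl, so your grouping does no harm.
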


\begin{proof}
	Suppose that $\Algebra = (A, \AbotElement, \Anot, \Ajoin, \OBox{\alpha}, I)_{\alpha \in T}$ is a full $\LPBool_\CS$ algebra over $\T = (T, 0_\T, -_\T, \ttsum_\T, \tapp_\T, \tinspect_\T)$. Considering the Boolean algebra $(T, 0_\T, -_\T, \ttsum_\T)$, by the Stone representation theorem, there exists an isomorphism $g$ from $(T, 0_\T, -_\T, \ttsum_\T)$ to a set algebra $(P, \emptyset, \setminus, \cup)$.  Next, we shall extend $g$ to an isomorphism from $\T = (T, 0_\T, -_\T, \ttsum_\T, \tapp_\T, \tinspect_\T)$ to an extension of $(P, \emptyset, \setminus, \cup)$. 
	
	Let  $\alpha', \beta' \in P$. There are $\alpha, \beta \in T$ such that $g(\alpha) = \alpha'$  and $g(\beta) = \beta'$. Now define term operators $\tapp_\mathcal{P}$ and $\tinspect_\mathcal{P}$ on $P$ as follows:
	\begin{align}
		\alpha' \tapp_\mathcal{P} \beta' &:= g(\alpha \tapp_\T \beta), \\
		\tinspect_\mathcal{P} \alpha' &:=  g(\tinspect_\T \alpha).
	\end{align}
	Since $\tapp_\T$ and $\tinspect_\T$ are well-defined, it follows that $\tapp_\mathcal{P}$ and $\tinspect_\mathcal{P}$ are well-defined. Now it is obvious that $g$ is an isomorphism from $\T = (T, 0_\T, -_\T, \ttsum_\T, \tapp_\T, \tinspect_\T)$ to  $(P, \emptyset, \setminus, \cup, \tapp_\mathcal{P}, \tinspect_\mathcal{P})$. Let $\mathcal{P} = (P, \emptyset, \setminus, \cup, \tapp_\mathcal{P}, \tinspect_\mathcal{P})$.

	Next, by the Stone representation theorem, the Boolean algebra $(A, \AbotElement, \Anot, \Ajoin)$ is isomorphic to a set algebra $(B, \emptyset, \setminus, \cup)$ via an isomorphism $f$. Define $\OBox{\alpha'}^\prime$ (for $\alpha' \in P$) and the interpretation $I'$ as follows: 
	\begin{equation}\label{eq: induced function via bi-isomorphism Bool}
		\OBox{g(\alpha)}^\prime (\phi) :=  f(\OBox{\alpha}(\phi)), \qquad \text{ for } \alpha \in T.
	\end{equation}
	\begin{equation}\label{eq: induced function via bi-isomorphism Bool-interpretation}
		I'(\justConsThree) := g(I(\justConsThree)), \qquad\mbox{ for $\justConsThree \in \CTerms \cup \{ \Abot\}$. }
	\end{equation}
	
	Let $\mathcal{B} = (B, \emptyset, \setminus, \cup, \OBox{\alpha'}^\prime, I')_{\alpha' \in P}$. Then, we show that $\mathcal{B}$ is a full $\LPBool_\CS$ algebra. 
	To this end we only check conditions $Al$-$Appl$-$\LPBool_\CS$, $Al$-$\CS$-$\LPBool_\CS$, $Al$-$j4$-$\LPBool_\CS$ and $Al$-$jT$-$\LPBool_\CS$ from Definitions \ref{def: pre-LPB-CS algebra} and \ref{def: full LPB-CS algebra} (other cases can be verified similarly).
	
	
	For $Al$-$Appl$-$\LPBool_\CS$, suppose that $\alpha, \beta \in T$:
	\begin{eqnarray*}
		& \OBox{g(\alpha)}^\prime(\phi \limplies \psi) \Ameet \OBox{g(\beta)}^\prime(\phi)=\\		
		& f(\OBox{\alpha}(\phi \limplies \psi)) \Ameet f(\OBox{\beta}(\phi)) =
		\\		
		& f(\OBox{\alpha}(\phi \limplies \psi) \Ameet \OBox{\beta}(\phi)) \leq \\ 
		& f(\OBox{\alpha \tapp_{\T} \beta}(\psi))= \OBox{g(\alpha) \tapp_{\mathcal{P}_\T} g(\beta)}^\prime(\psi).
	\end{eqnarray*}
	
	For $Al$-$\CS$-$\LPBool_\CS$, suppose $\justConsThree:\phi \in \CS$, then we have
	\[
	\OBox{I'(\justConsThree)}^\prime (\phi) = \OBox{g(I(\justConsThree))}^\prime (\phi) = f(\OBox{I(\justConsThree)}(\phi)) = f(\AtopElement) = B.
	\]
	
	For $Al$-$j4$-$\LPBool_\CS$, suppose that $t \in \TermsBool$ is an arbitrary term. Then, for $\alpha \in T$, we have
	\begin{align*}
	 \OBox{g(\alpha)}^\prime (\phi) = f( \OBox{\alpha}(\phi))   \leq  f(\OBox{\tinspect_{\T} \alpha} (t:\phi)) = \OBox{g(\tinspect_{\T} \alpha)}^\prime (t:\phi)  = \OBox{\tinspect_{\mathcal{P}} g(\alpha)}^\prime (t:\phi).
	\end{align*}

	For $Al$-$jT$-$\LPBool_\CS$, we need some preliminary results. Given a function $v: \VTerms \to T$, define the function $v': \VTerms \to P$ by
	\begin{equation}\label{eq: Bi-representation def of function v'}
		v'(\justVarOne) := g(v(\justVarOne)).
	\end{equation}
	It is not difficult to show that for every $t \in \TermsBool$ we have
	\[
	t_{I'}^{v'} = g(t_I^v).
	\]
	The proof involves a routine induction on the complexity of $t$. The base cases, where $t \in \VTerms$ or $t \in \CTerms \cup \{ \Abot \}$, follows from \eqref{eq: induced function via bi-isomorphism Bool-interpretation} and \eqref{eq: Bi-representation def of function v'}. The proof for the induction steps is straightforward.
	
	Now suppose that $\Val' : \Prop \to B$ is  an arbitrary valuation on $\mathcal{B}$. Then, $\Val = f^{-1} \circ \Val'$ would be a valuation on $\Algebra$. Then, by induction on the complexity of the formula $\phi$, one can show that
	\[
	f(\BIValExt (\phi)) = \tilde{\theta'}_{v'} (\phi).
	\]
	The base case follows from $\Val = f^{-1} \circ \Val'$. For the induction step, we only show the case where $\phi = t:\psi$. We have
	\[
	f(\BIValExt (t:\psi)) = f(\OBox{ t_I^v } (\psi)) = \OBox{ g(t_I^v) }' (\psi) = \OBox{ t_{I'}^{v'} }' (\psi) = \tilde{\theta'}_{v'} (t:\psi).
	\]
	Thus, $f \circ \BIValExt = \tilde{\theta'}_{v'}$. Finally, we verify the condition $Al$-$jT$-$\LPBool_\CS$. We have
	\[
	\OBox{g(\alpha)}^\prime (\phi) = f( \OBox{\alpha}(\phi)) \leq f(\BIValExt (\phi)) = \tilde{\theta'}_{v'} (\phi).
	\]
	The above inequality follows from the fact that $f$ is an isomorphism  and $\OBox{\alpha}(\phi) \leq \BIValExt (\phi)$.
	
	It is obvious that $\mathcal{B}$ is a set algebra, and hence it is a full $\LPBool_\CS$ set algebra.	\qed
\end{proof}

\begin{theorem}
	$\LPBool_\CS$ is complete with respect to the class of all full $\LPBool_\CS$ set algebras.
\end{theorem}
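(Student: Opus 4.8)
The plan is to mirror the completeness-plus-representation strategy that was already used successfully for the plain full $\LP_\CS$ algebras (Theorem~\ref{thm: completeness full LP-CS algebras} followed by the representation theorem), now combining the soundness and completeness result of Theorem~\ref{thm: completeness LPBool over arbitrary Boolean algebra} with the Bi-Representation Theorem~\ref{thm: Representation Theorem LPBool}. The whole argument is a contrapositive transfer: start from an unprovable formula, obtain a refuting full $\LPBool_\CS$ algebra from completeness, push that refutation through the bi-isomorphism into a set algebra, and conclude validity fails there as well.

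First I would assume $\not\vdash_{\LPBool_\CS}\phi$. By Theorem~\ref{thm: completeness LPBool over arbitrary Boolean algebra} there is a full $\LPBool_\CS$ algebra $\Algebra=(A,\AbotElement,\Anot,\Ajoin,\OBox{\alpha},I)_{\alpha\in T}$ over some $\T$, together with a valuation $\Val:\Prop\to A$ and a function $v:\VTerms\to T$, such that $\BIValExt(\phi)\neq\AtopElement_\Algebra$. Next I would invoke Theorem~\ref{thm: Representation Theorem LPBool} to get a bi-isomorphism $(f,g)$ from $\Algebra$ onto a full $\LPBool_\CS$ set algebra $\mathcal{B}=(B,\emptyset,\setminus,\cup,\OBox{\alpha'}^\prime,I')_{\alpha'\in P}$ over $\mathcal{P}$. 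Then I would transport the valuation and the term-variable assignment to $\mathcal{B}$ by setting $\Val'(p):=f(\Val(p))$ for $p\in\Prop$ and $v'(\justVarOne):=g(v(\justVarOne))$ for $\justVarOne\in\VTerms$, exactly as in equation~\eqref{eq: Bi-representation def of function v'}.

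The key step is then to observe that the commutation identities established inside the proof of the Bi-Representation Theorem already give everything needed. Specifically, in that proof it was shown that $t^{v'}_{I'}=g(t^v_I)$ for every term $t$ and, more importantly, that $f\circ\BIValExt=\tilde{\theta'}_{v'}$, i.e. $f(\BIValExt(\psi))=\tilde{\theta'}_{v'}(\psi)$ for every formula $\psi$. Applying this to $\phi$ yields $\tilde{\theta'}_{v'}(\phi)=f(\BIValExt(\phi))$. Since $f$ is a Boolean isomorphism it sends $\AtopElement_\Algebra$ to $\AtopElement_\mathcal{B}$ and is injective, so from $\BIValExt(\phi)\neq\AtopElement_\Algebra$ we get $\tilde{\theta'}_{v'}(\phi)\neq\AtopElement_\mathcal{B}$. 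Hence $\mathcal{B}\not\models\phi$, and as $\mathcal{B}$ is a full $\LPBool_\CS$ set algebra, $\phi$ is not valid in the class of all such algebras. Conversely, soundness of $\LPBool_\CS$ over the full algebras already covers validity of every theorem, and set algebras are a subclass, so the right-to-left direction is immediate.

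I do not expect any genuine obstacle here, since the heavy lifting—constructing the two Stone isomorphisms, defining $\tapp_\mathcal{P}$, $\tinspect_\mathcal{P}$, $I'$, the induced operators $\OBox{\alpha'}^\prime$, and verifying all the algebra conditions together with the crucial identity $f\circ\BIValExt=\tilde{\theta'}_{v'}$—was all discharged inside Theorem~\ref{thm: Representation Theorem LPBool}. The only point demanding a little care is making sure the transported valuation $\Val'$ and assignment $v'$ are precisely those for which that identity was proved; this is guaranteed by taking $\Val=f^{-1}\circ\Val'$ (equivalently $\Val'=f\circ\Val$) and $v'=g\circ v$, matching the hypotheses used in the representation proof. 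With that alignment in place the conclusion follows in one line from injectivity of $f$.
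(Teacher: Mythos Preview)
Your proposal is correct and matches the paper's approach exactly: the paper's proof is simply the one-line statement that the result follows from the Bi-Representation Theorem~\ref{thm: Representation Theorem LPBool} together with the Completeness Theorem~\ref{thm: completeness LPBool over arbitrary Boolean algebra}, and you have merely spelled out the routine contrapositive transfer that underlies this citation.
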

\begin{proof}
	Follows from the Bi-representation Theorem \ref{thm: Representation Theorem LPBool} and the Completeness Theorem \ref{thm: completeness LPBool over arbitrary Boolean algebra}. \qed
\end{proof}


\section{$\LP_\CS$ algebras over arbitrary polynomial Boolean algebras}
\label{sec: LP-CS algebras over arbitrary polynomial Boolean algebras}

In this section, similar to Section \ref{sec: LP-CS algebras over arbitrary Boolean algebras}, we assume that proof terms constitute a Boolean algebra. We give another algebraic semantics for the logic $\LPBool$. In contrast to the results of Section \ref{sec: LP-CS algebras over arbitrary Boolean algebras}, in this section we use a Boolean algebra on polynomials. More precisely, while in Section \ref{sec: LP-CS algebras over arbitrary Boolean algebras} the truth of a formula in an algebra is defined directly using functions $v:\VTerms\to T$, where $T$ is a set of terms, in this section we make use of polynomials with coefficients in $T$.


Let $\T = (T, 0_\T, -_\T, \ttsum_\T,  \tapp_\T,  \tinspect_\T)$, where $(T, 0_\T, -_\T, \ttsum_\T)$ is a Boolean algebra, and $\tapp_\T : T \times T \to T$ and $\tinspect_\T :T \to T$.\ Let $T[\VTerms]$ denote the set of polynomials with variables in $\VTerms$ and coefficients in $T$.\  Two polynomials $f,g\in T[\VTerms]$ are equal, denoted by $f \stackrel{p}{=} g$, if and only if for every $v:\VTerms\to T$, $v(f)=v(g)$, where by $v(f)$ we mean replacing every occurrence of every variable in $f$ by its image under $v$.  It follows from the equality defined above that the Boolean structure on $T$ induces a Boolean structure on  $T[\VTerms]$. Thus, we obtain a Boolean algebra $(T[\VTerms], 0_{\T}, -_{\T[\VTerms]},\ttsum_{\T[\VTerms]})$. Next we add the operators 
$$\tapp_{\T[\VTerms]} : T[\VTerms] \times T[\VTerms] \to T[\VTerms] \quad \text{ and } \quad \tinspect_{\T[\VTerms]} :T[\VTerms] \to T[\VTerms]$$
to this Boolean algebra in such a way that $\tapp_{\T[\VTerms]}$ and $\tinspect_{\T[\VTerms]}$ are extensions of $\tapp_{\T}$ and $\tinspect_{\T}$ respectively. For example, one can define $\tapp_{\T[\VTerms]}$ and $\tinspect_{\T[\VTerms]}$ as follows:
\begin{equation}\label{eq: definition of application in T[Var]}
	\alpha \tapp_{\T[\VTerms]} \beta :=  
	\begin{cases}
		\alpha \tapp_{\T} \beta, & \text{if } \alpha,\beta \in T, \\
		\alpha, & \text{Otherwise,}
	\end{cases}
\end{equation}
and
\begin{equation}\label{eq: definition of bang in T[Var]}
	\tinspect_{\T[\VTerms]} \alpha := 
\begin{cases}
	\tinspect_\T \alpha, & \text{if } \alpha \in T, \\
	\alpha, & \text{Otherwise.}
\end{cases}
\end{equation}
By $\T[\VTerms]$ we denote the structure
$$ (T[\VTerms], 0_{\T}, -_{\T[\VTerms]}, \ttsum_{\T[\VTerms]},  \tapp_{\T[\VTerms]},  \tinspect_{\T[\VTerms]}),$$
where $\tapp_{\T[\VTerms]}$ and $\tinspect_{\T[\VTerms]}$ are arbitrary extensions of $\tapp_{\T}$ and $\tinspect_{\T}$ respectively. From now on when we consider $\T[\VTerms]$, for some $\T$, we assume that  $\tapp_{\T[\VTerms]}$ and $\tinspect_{\T[\VTerms]}$ are extensions of $\tapp_{\T}$ and $\tinspect_{\T}$ respectively. 


\begin{definition}\label{def: LPB-CS algebra}
	Let $\T = (T, 0_\T, -_\T, \ttsum_\T,  \tapp_\T,  \tinspect_\T)$, where $(T, 0_\T, -_\T, \ttsum_\T)$ is a Boolean algebra, and $\tapp_\T : T \times T \to T$ and $\tinspect_\T :T \to T$, and let $\T[\VTerms] = (T[\VTerms], 0_{\T}, -_{\T[\VTerms]}, \ttsum_{\T[\VTerms]},  \tapp_{\T[\VTerms]},  \tinspect_{\T[\VTerms]})$. Given a constant specification $\CS$ for $\LPBool$, a pre polynomial $\LPBool_\CS$ algebra 
	$$\Algebra = (A, \AbotElement, \Anot, \Ajoin, \OBox{\alpha}, I)_{\alpha \in T[\VTerms]}$$
	is defined similar to pre $\LPBool_\CS$ algebra (Definition \ref{def: pre-LPB-CS algebra}), where  $I: \CTerms \cup \{ \Abot \} \to T$ is an interpretation such that $I(\Abot)=0_{\T}$, and operators $\OBox{\alpha} : \FormulaeBool \to A$, for $\alpha \in T[\VTerms]$,  satisfy the following conditions. For all $\AElementOne , \AElementTwo  \in A$, and all $\alpha, \beta \in T[\VTerms]$:
	\begin{description}
		
		\item[Al-Appl-$\LPBool_\CS$.] $\OBox{\alpha}(\phi \limplies \psi) \Ameet \OBox{\beta}(\phi) \leq \OBox{\alpha \tapp_{T[\VTerms]} \beta}(\psi)$,
		
		\item[Al-Sum-$\LPBool_\CS$.] $\OBox{\alpha}(\phi) \Ajoin \OBox{\beta}(\phi) \leq \OBox{\alpha \ttsum_{T[\VTerms]}  \beta}(\phi)$,
		
		
		
		\item[Al-$\Atop$--$\LPBool_\CS$.] $\OBox{1_\T}(\phi) = \AtopElement$, where $\vdash_{\LPBool_\CS} \phi$,
		
		\item[Al-CS-$\LPBool_\CS$.] $\OBox{I(\justConsThree)}(\phi) = \AtopElement$, where $\justConsThree:\phi \in \CS$.
		
	\end{description}
	
\end{definition}

\begin{definition}
	The interpretation $I$ can be extended to $\tilde{I}: \TermsBool \to T[\VTerms]$ such that for every $t \in \TermsBool$, $\tilde{I}(t)$ is defined as follows:
	\begin{description}
		\item[1.] $\tilde{I}(\justVarOne):= \justVarOne$ for $\justVarOne \in\VTerms$ and $\tilde{I}(\justConsThree):=I(\justConsThree)$ for $\justConsThree \in \CTerms \cup \{ \Abot \}$.
		
		\item[2.] $\tilde{I}(s \ast t):= \tilde{I}(s) \ast_{\T[\VTerms]} \tilde{I}(t)$ where $\ast\in\{+,\cdot\}$.
		
		\item[3.] $\tilde{I}(\ast  s):= \ast_{\T[\VTerms]}\tilde{I}(s)$ where $\ast\in\{-,!\}$.
		
	\end{description}
	
\end{definition}

\begin{definition}
	A valuation is a  function $\Val : \Prop \to A$. The assignment $\BIExt : \Formulae \to A$ on $\Algebra  = (A, \AbotElement, \Anot, \Ajoin, \OBox{\alpha}, I)_{\alpha \in T[\VTerms]}$ is defined as follows:
	\begin{align*}
		\BIExt(p) &= \Val(p),\\
		\BIExt(\bot) &= \AbotElement,\\
		\BIExt(\phi \limplies \psi) &= \BIExt(\phi) \Aimplise \BIExt(\psi), \\
		\BIExt(t: \phi) &= \OBox{\tilde{I}(t)}(\phi), \\
		\BIExt(s \lequal t) & = 
		\begin{cases}
			\AtopElement & \text{ if } \tilde{I}(s) \stackrel{p}{=} \tilde{I}(t)\\
			\AbotElement & \text{ if }  \tilde{I}(s) \stackrel{p}{\neq} \tilde{I}(t).
		\end{cases} 
	\end{align*}
\end{definition}

\begin{definition}
	A polynomial $\LPBool_\CS$ algebra $\Algebra = (A, \AbotElement, \Anot, \Ajoin, \OBox{\alpha}, I)_{\alpha \in T[\VTerms]}$ is a pre polynomial $\LPBool_\CS$ algebra that satisfies the following condition. For all $\phi \in \FormulaeBool$, all $t \in \TermsBool$, all $\alpha \in T[\VTerms]$, and  all valuation $\Val: \Prop \to A$:
	\begin{description}
			\item[Al-j4-$\LPBool_\CS$.] $\OBox{\tilde{I}(t)}(\phi) \leq \OBox{\tinspect_{T[\VTerms]} \tilde{I}(t)} (t:\phi)$,
			
		\item[Al-jT-$\LPBool_\CS$.] $\OBox{\alpha}(\phi) \leq \BIExt(\phi).$
		
	\end{description}
	The class of all polynomial $\LPBool_\CS$ algebras with singleton $\nabla  = \{\AtopElement\}$ is denoted by $\Algebra_{\LPBool_\CS}^{\sf poly}$.
\end{definition}

\begin{definition}[Validity]
	A formula $\phi$ is true in the algebra $\Algebra = (A, \AbotElement, \Anot, \Ajoin, \OBox{\alpha}, I)_{\alpha \in T[\VTerms]}$, denoted by $\Algebra \models \phi$, if $\BIExt(\phi) = \AtopElement$ for every $\theta$. A formula $\phi$ is valid in the class $\Algebra_{\LPBool_\CS}^{\sf poly}$, denoted by $\Algebra_{\LPBool_\CS}^{\sf poly} \models \phi$, if $\Algebra \models \phi$ for every algebra $\Algebra \in \Algebra_{\LPBool_\CS}^{\sf poly}$.
\end{definition}

\subsection{Completeness of $\LPBool_\CS$}

Now we show that $\LPBool_\CS$ is characterized by the class $\Algebra_{\LPBool_\CS}^{\sf poly}$.

\begin{definition}
	Given $\phi \in \FormulaeBool$ and $t \in \TermsBool$, let 
	\[
	\OCl{\phi} := \{ \psi \in \FormulaeBool \mid \vdash_{\LPBool_\CS} \phi \liff\psi \},
	\]
	\[
	\OCl{t} := \{ s \in \TermsBool \mid \vdash_{\LPBool_\CS} s \lequal t \}.
	\]
	First define $\T = (\OCl{\TermsBool}, \ttsum_\T, -_\T, 0_\T,  \tapp_\T,  \tinspect_\T)$ as follows:
	\begin{align*}
	\OCl{\TermsBool} &:= \{ \OCl{t} \mid t \in \TermsBool \}, \\
	\OCl{s} \ttsum_\T \OCl{t} &:= \OCl{s \ttsum t}, \\
	-_\T \OCl{t}  &:= \OCl{- t}, \\
	0_\T  &:= \OCl{\Abot}, \\
	\OCl{s} \tapp_\T \OCl{t} &:= \OCl{s \tapp t}, \\
	\tinspect_\T \OCl{t} &:= \OCl{\tinspect t},
	\end{align*}
	Now consider the structure $\OCl{\TermsBool}[\VTerms]$. The definition of interpretation $I$ is similar to that is given in  Definition \ref{def: Tarski-Lindenbaum algebra for LPBool_emptyset}, for the Tarski-Lindenbaum algebra of $\LPBool_\CS$, as follows:
	\begin{align*}
	I(\justConsThree) &:= \OCl{\justConsThree}, \text{ for } \justConsThree \in \CTerms \cup \{ \Abot \}.  
	\end{align*}
	Note that the function $\tilde{I} : \TermsBool \to \OCl{\TermsBool}[\VTerms]$ is surjective.
	
	The Tarski-Lindenbaum algebra for $\LPBool_\CS$ is defined as follows:	
	
	\[
	\Algebra_{\LPBool_\CS}^p := ( \OCl{\FormulaeBool}, \AbotElement, \Anot, \Ajoin, \OBox{\alpha}, I)_{\alpha \in \OCl{\TermsBool}[\VTerms]}
	\]
	where $\Ajoin, \Anot$, and $\AbotElement$ are defined similar to Definition \ref{def: Tarski-Lindenbaum algebra for HLP}, and
	\[
	\OCl{\FormulaeBool} := \{ \OCl{\phi} \mid \phi \in \FormulaeBool \},
	\]
	\[
	\OBox{\alpha} (\phi) :=   \OCl{t:\phi}, \quad \text{ where } \alpha = \tilde{I}(t) \text{ for some } t \in \TermsBool.
	\]

	Let $\nabla =  \{ \OCl{\top} \}$. 
\end{definition}

\begin{lemma}\label{lem: tilde(I)(t) is t^v-I}
	Let $\Algebra_{\LPBool_\CS}^p := ( \OCl{\FormulaeBool}, \AbotElement, \Anot, \Ajoin, \OBox{\alpha}, I)_{\alpha \in \OCl{\TermsBool}[\VTerms]}$ be the Tarski-Lindenbaum algebra for $\LPBool_\CS$ and let $v : \VTerms \to \OCl{\TermsBool}$ be a function. Then, for every $t \in \TermsBool$:
	$$v(\tilde{I}(t)) = t_I^v.$$ 
	Where  $t_I^v$ is defined as in Definition \ref{def: t^I_v Section 5}.
\end{lemma}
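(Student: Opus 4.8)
The plan is to proceed by induction on the complexity of the term $t \in \TermsBool$, comparing the recursive clauses defining $\tilde{I}$ with those defining $t^{v}_{I}$ (Definition~\ref{def: t^I_v Section 5}) and pushing the evaluation $v$ through each term operation. Conceptually, the composite ``first apply $\tilde{I}$, then evaluate at $v$'' replaces every constant $\justConsThree$ by $I(\justConsThree)$, every variable $\justVarOne$ by $v(\justVarOne)$, and interprets each operation inside $\T = \OCl{\TermsBool}$, which is exactly the recipe defining $t^{v}_{I}$; the induction simply makes this precise.

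For the base cases, if $t = \justVarOne \in \VTerms$ then $\tilde{I}(\justVarOne) = \justVarOne$, so evaluating replaces $\justVarOne$ by its image and $v(\tilde{I}(\justVarOne)) = v(\justVarOne) = \justVarOne^{v}_{I}$. If $t = \justConsThree \in \CTerms \cup \{ \Abot \}$ then $\tilde{I}(\justConsThree) = I(\justConsThree)$ is a constant of $\OCl{\TermsBool}$ containing no variables, so $v$ fixes it and $v(\tilde{I}(\justConsThree)) = I(\justConsThree) = \justConsThree^{v}_{I}$. For the inductive step on a binary operation $\ast \in \{ \ttsum, \tapp \}$ we have $\tilde{I}(s \ast t) = \tilde{I}(s) \ast_{\T[\VTerms]} \tilde{I}(t)$, and the argument reduces to the commutation identity $v(\alpha \ast_{\T[\VTerms]} \beta) = v(\alpha) \ast_{\T} v(\beta)$; granting it, the induction hypotheses $v(\tilde{I}(s)) = s^{v}_{I}$ and $v(\tilde{I}(t)) = t^{v}_{I}$ give $v(\tilde{I}(s \ast t)) = s^{v}_{I} \ast_{\T} t^{v}_{I} = (s \ast t)^{v}_{I}$. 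The unary cases $\ast \in \{ -, \tinspect \}$ are identical, using $v(\ast_{\T[\VTerms]} \alpha) = \ast_{\T} v(\alpha)$.

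The heart of the proof is thus the commutation of $v$ with the operations, and I expect the term operators $\tapp$ and $\tinspect$ to be the main obstacle. For the Boolean operators $\ttsum$ and $-$ the identities are immediate, since by construction the Boolean structure on $\T[\VTerms]$ is the one induced by pointwise evaluation at the maps $v \colon \VTerms \to \OCl{\TermsBool}$. For $\tapp$ and $\tinspect$ one must invoke that $\tapp_{\T[\VTerms]}$ and $\tinspect_{\T[\VTerms]}$ were fixed to be extensions of $\tapp_{\T}$ and $\tinspect_{\T}$: once all variables occurring in $\tilde{I}(s)$ and $\tilde{I}(t)$ have been replaced by their $v$-images, the arguments lie in $\OCl{\TermsBool}$, where the extended operator coincides with the original operator of $\T$, so the evaluation collapses to a computation performed entirely within $\T$. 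This reliance on the extension property, and the care needed to ensure the evaluation respects $\tapp$ and $\tinspect$ rather than only the Boolean operations, is the one genuinely delicate point of the induction.
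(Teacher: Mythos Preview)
Your proposal is correct and follows essentially the same approach as the paper: induction on the complexity of $t$, with trivial base cases and the inductive step hinging on the commutation of the evaluation $v$ with the term operations, together with the fact that $\tapp_{\T[\VTerms]}$ and $\tinspect_{\T[\VTerms]}$ extend $\tapp_\T$ and $\tinspect_\T$ (the paper spells out only the $\tinspect$ case, using exactly this extension property at the point where $v(\tilde I(s)) \in \OCl{\TermsBool}$).
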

\begin{proof}
	The proof is by induction on the complexity of $t$. The base cases $t \in \VTerms$ and $t \in \CTerms \cup \{\Abot\}$ are trivial. For the induction step we only show the case that $t = !s$.
	\[
	v(\tilde{I}(!s)) = v(!_{\OCl{\TermsBool}[\VTerms]} \tilde{I}(s)) = !_{\OCl{\TermsBool}[\VTerms]} v(\tilde{I}(s))) = !_{\OCl{\TermsBool}} v(\tilde{I}(s))) = !_{\OCl{\TermsBool}} s_I^v = (!s)_I^v.
	\]
	The third equation follows from the fact that $!_{\OCl{\TermsBool}[\VTerms]}$ is an extension of $!_{\OCl{\TermsBool}}$ and $v(\tilde{I}(s))  \in \OCl{\TermsBool}$.
	\qed
 \end{proof}

%
%

\begin{corollary}
	The operators $\OBox{\alpha}$, for each $\alpha \in \OCl{\TermsBool}[\VTerms]$, in $\Algebra_{\LPBool_\CS}^p$ are well-defined.
\end{corollary}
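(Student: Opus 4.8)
The plan is to prove well-definedness by showing that the value $\OBox{\alpha}(\phi) := \OCl{t:\phi}$ does not depend on the choice of representative $t$ with $\tilde{I}(t) = \alpha$. Since it is already noted that $\tilde{I} : \TermsBool \to \OCl{\TermsBool}[\VTerms]$ is surjective, every $\alpha \in \OCl{\TermsBool}[\VTerms]$ does arise as $\tilde{I}(t)$ for some $t$, so the only thing to check is that $\tilde{I}(t) = \tilde{I}(t')$ implies $\OCl{t:\phi} = \OCl{t':\phi}$ for all $\phi \in \FormulaeBool$.

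First I would fix the canonical function $v_0 : \VTerms \to \OCl{\TermsBool}$ given by $v_0(\justVarOne) := \OCl{\justVarOne}$, and establish the auxiliary identity $t_I^{v_0} = \OCl{t}$ for every $t \in \TermsBool$. This is a routine induction on the complexity of $t$, mirroring equation \eqref{eq: t^{v}_{I} = \OCl{t} completeness of LPBool_emptyset} from the completeness proof: the base cases use $v_0(\justVarOne) = \OCl{\justVarOne}$ and $I(\justConsThree) = \OCl{\justConsThree}$, and the inductive steps use that the operators $\ttsum_\T, -_\T, \tapp_\T, \tinspect_\T$ on $\OCl{\TermsBool}$ are the quotient operators (e.g.\ $\OCl{s} \tapp_\T \OCl{t} = \OCl{s \tapp t}$). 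Combining this with Lemma \ref{lem: tilde(I)(t) is t^v-I}, which yields $v_0(\tilde{I}(t)) = t_I^{v_0}$, I obtain $v_0(\tilde{I}(t)) = \OCl{t}$.

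Now suppose $\tilde{I}(t) = \tilde{I}(t')$. Since elements of $\OCl{\TermsBool}[\VTerms]$ are identified under $\stackrel{p}{=}$, this means $\tilde{I}(t) \stackrel{p}{=} \tilde{I}(t')$, so by the definition of polynomial equality, evaluating at the particular function $v_0$ gives $v_0(\tilde{I}(t)) = v_0(\tilde{I}(t'))$. By the preceding paragraph this reads $\OCl{t} = \OCl{t'}$, which by the definition of the equivalence class on terms is exactly $\vdash_{\LPBool_\CS} t \lequal t'$. Applying the admissible rule $EqTm$ then yields $\vdash_{\LPBool_\CS} t:\phi \liff t':\phi$, hence $\OCl{t:\phi} = \OCl{t':\phi}$, which is precisely what is needed.

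I expect the only delicate point to be the auxiliary identity $t_I^{v_0} = \OCl{t}$: although it is a straightforward structural induction, one must verify carefully that the operators appearing in the computation of $t_I^{v_0}$ are the quotient operators of the term Lindenbaum algebra, and that $v_0$ genuinely lands in $\OCl{\TermsBool}$ so that Lemma \ref{lem: tilde(I)(t) is t^v-I} is applicable. Everything after that reduces to a single appeal to $EqTm$ together with the fact that $\stackrel{p}{=}$ forces agreement under every valuation, in particular under $v_0$.
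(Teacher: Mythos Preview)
Your proposal is correct and follows essentially the same route as the paper: both fix the canonical $v_0(\justVarOne)=\OCl{\justVarOne}$, invoke Lemma~\ref{lem: tilde(I)(t) is t^v-I} together with the identity $t_I^{v_0}=\OCl{t}$ (which the paper cites as equation~\eqref{eq: t^{v}_{I} = \OCl{t} completeness of LPBool_emptyset}) to pass from $\tilde I(t)\stackrel{p}{=}\tilde I(t')$ to $\OCl{t}=\OCl{t'}$, and then conclude via $EqTm$. The paper additionally records the (trivial) observation that $\OBox{\alpha}$ is well-defined in its formula argument, which you may wish to mention for completeness but which adds nothing substantive.
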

\begin{proof}
	We first show that $\OBox{\alpha}$ is well-defined on $\OCl{\TermsBool}[\VTerms]$. Suppose that $s, t \in \tilde{I}^{-1} (\alpha)$, for some $s,t \in \TermsBool.$ Then, $\tilde{I}(s) \stackrel{p}{=} \tilde{I}(t)$. Hence, $v(\tilde{I}(s)) = v(\tilde{I}(t))$, for every $v : \VTerms \to \OCl{\TermsBool}$. By Lemma \ref{lem: tilde(I)(t) is t^v-I}, we get $s_I^v = t_I^v$, for every $v : \VTerms \to \OCl{\TermsBool}$. Now define $v  : \VTerms \to \OCl{\TermsBool}$ as follows:
	$$v(x) = \OCl{x}.$$
	For this function, we have $s_I^{v} = t_I^{v}$. By \eqref{eq: t^{v}_{I} = \OCl{t} completeness of LPBool_emptyset} in the proof of Theorem \ref{thm: completeness LPBool over arbitrary Boolean algebra}, we obtain $\OCl{s} = \OCl{t}$, and hence $\vdash_{\LPBool_\CS} t \approx s$. Thus, $\vdash_{\LPBool_\CS} t:\phi \liff s:\phi$, for every $\phi \in \FormulaeBool$. Therefore $\OCl{t:\phi} = \OCl{s:\phi}$, and then $\OBox{\tilde{I}(t)} (\phi) = \OBox{\tilde{I}(s)} (\phi)$.
	
	Let $\alpha = \tilde{I}(t)$, for some $t \in \TermsBool$. We now show that $\OBox{\alpha}$ is well-defined on $\FormulaeBool$. Suppose that $\phi = \psi$. Then, $\vdash_{\LPBool_\CS} t:\phi \liff t:\psi$. Thus, $\OCl{t:\phi} = \OCl{t:\psi}$. Hence, $\OBox{\alpha} (\phi) = \OBox{\alpha} (\psi)$.
	  \qed
\end{proof}


\begin{lemma}
	The Tarski-Lindenbaum algebra $\Algebra_{\LPBool_\CS}^p$ is a polynomial $\LPBool_\CS$ algebra.
\end{lemma}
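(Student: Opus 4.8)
The plan is to check, one by one, the conditions defining a polynomial $\LPBool_\CS$ algebra, taking the well-definedness of the operators $\OBox{\alpha}$ for granted since it was just established in the preceding corollary. Two structural facts would be used repeatedly. First, $\tilde{I}\colon \TermsBool \to \OCl{\TermsBool}[\VTerms]$ is surjective, so every $\alpha \in \OCl{\TermsBool}[\VTerms]$ has the form $\tilde{I}(t)$ for some $t \in \TermsBool$, and then $\OBox{\alpha}(\phi) = \OCl{\jbox{t}\phi}$ by the definition of the operators. Second, $\tilde{I}$ commutes with the term operations, i.e. $\tilde{I}(s \tapp t) = \tilde{I}(s) \tapp_{\OCl{\TermsBool}[\VTerms]} \tilde{I}(t)$, $\tilde{I}(s \ttsum t) = \tilde{I}(s) \ttsum_{\OCl{\TermsBool}[\VTerms]} \tilde{I}(t)$, and $\tilde{I}(\tinspect t) = \tinspect_{\OCl{\TermsBool}[\VTerms]} \tilde{I}(t)$, which hold by the definition of $\tilde{I}$ together with the fact that the operations on $\OCl{\TermsBool}[\VTerms]$ extend those on $\OCl{\TermsBool}$.

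The four pre-algebra conditions go exactly as in the earlier Tarski--Lindenbaum lemmas. For Al-Appl, writing $\alpha = \tilde{I}(s)$ and $\beta = \tilde{I}(t)$, I would compute $\OBox{\alpha}(\phi \limplies \psi) \Ameet \OBox{\beta}(\phi) = \OCl{\jbox{s}(\phi \limplies \psi) \wedge \jbox{t}\phi} \leq \OCl{\jbox{s \tapp t}\psi} = \OBox{\tilde{I}(s \tapp t)}(\psi) = \OBox{\alpha \tapp_{\OCl{\TermsBool}[\VTerms]} \beta}(\psi)$, where the inequality is the image of the application axiom under $\OCl{\cdot}$ and the last equality is the homomorphism property; Al-Sum is identical with the sum axiom. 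For Al-$\Atop$, if $\vdash_{\LPBool_\CS} \phi$ then the rule $Int$ gives $\vdash_{\LPBool_\CS} \Atop:\phi$, and since $1_\T = \tilde{I}(\Atop)$ we get $\OBox{1_\T}(\phi) = \OCl{\Atop:\phi} = \OCl{\top} = \AtopElement$; for Al-CS, when $\justConsThree:\phi \in \CS$ the formula $\justConsThree:\phi$ is an axiom, so $\OBox{I(\justConsThree)}(\phi) = \OCl{\justConsThree:\phi} = \AtopElement$.

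It remains to verify the two ``full'' conditions. Al-j4 follows from the $j4$ axiom together with the homomorphism property of $\tilde{I}$: $\OBox{\tilde{I}(t)}(\phi) = \OCl{\jbox{t}\phi} \leq \OCl{\jbox{\tinspect t}\jbox{t}\phi} = \OBox{\tilde{I}(\tinspect t)}(t:\phi) = \OBox{\tinspect_{\OCl{\TermsBool}[\VTerms]} \tilde{I}(t)}(t:\phi)$. The step I expect to be the main obstacle is Al-jT, namely $\OBox{\alpha}(\phi) \leq \BIExt(\phi)$: here the left-hand side $\OCl{\jbox{t}\phi}$ is a fixed class, while the right-hand side is an assignment value that a priori depends on the valuation $\theta$. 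I would handle it in the same way as in the proof of Theorem \ref{thm: completeness full LP-CS algebras}: the $jT$ axiom gives $\OBox{\alpha}(\phi) = \OCl{\jbox{t}\phi} \leq \OCl{\phi}$, and the identification $\BIExt(\phi) = \OCl{\phi}$ --- the Truth Lemma for the canonical assignment $\theta(p) = \OCl{p}$ that underlies the ensuing completeness theorem --- yields $\OBox{\alpha}(\phi) \leq \OCl{\phi} = \BIExt(\phi)$. Because this inequality leans on that identification, the care here lies in making the dependence on the canonical assignment explicit, either by proving the relevant direction of the Truth Lemma inside this lemma or by citing it from the completeness proof.
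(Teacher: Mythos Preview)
Your approach is essentially the paper's: the paper only verifies Al-Appl-$\LPBool_\CS$ and Al-$\Atop$-$\LPBool_\CS$ explicitly and declares ``the proof of other cases is simple,'' whereas you spell out Al-Sum, Al-CS, and Al-j4 as well, by the same mechanism (translate the relevant $\LPBool_\CS$ axiom through $\OCl{\cdot}$ and use the homomorphism property of $\tilde{I}$). So on the pre-algebra part and on Al-j4 your proposal is correct and matches the paper.

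The point you flag about Al-jT is real and is not addressed in the paper's proof of this lemma at all --- it is among the ``simple'' cases. As you observe, the definition quantifies over \emph{all} valuations $\theta$, while the inequality $\OCl{\jbox{t}\phi} \leq \OCl{\phi} = \BIExt(\phi)$ only goes through for the canonical valuation $\theta(p) = \OCl{p}$ via the Truth Lemma. This is precisely how the paper handles the analogous step in the full $\LP_\CS$ case: there the Tarski--Lindenbaum algebra is first shown to be a \emph{pre} algebra in a separate lemma, and Al-jT is verified only for the canonical valuation inside the completeness proof (Theorem~\ref{thm: completeness full LP-CS algebras}). Here the paper folds that into ``simple,'' but the logical status is the same. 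Your instinct to make the dependence on the canonical valuation explicit --- effectively proving Al-jT as part of the completeness argument rather than as a free-standing property of the algebra for arbitrary $\theta$ --- is the right reading of what the paper is actually doing, and is more careful than the paper's own proof of this lemma.
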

\begin{proof}
	We only check $Al$-$Appl$-$\LPBool_\CS$ and $Al$-$\Atop$-$\LPBool_\CS$. The proof of other cases is simple.
	\begin{itemize}
		\item Condition $Al$-$Appl$-$\LPBool_\CS$.
		\begin{align*}
		& \OBox{\tilde{I}(s)}(\OCl{\phi} \Aimplise \OCl{\psi}) \Ameet \OBox{\tilde{I}(t)}(\OCl{\phi}) \\ &=
		\OBox{\tilde{I}(s)}(\OCl{\phi \limplies \psi}) \Ameet \OBox{\tilde{I}(t)}(\OCl{\phi}) \\ &=
		\OCl{\jbox{s} (\phi \limplies \psi)}  \Ameet  \OCl{\jbox{t} \phi}\\  &=
		\OCl{\jbox{s} (\phi \limplies \psi) \wedge \jbox{t} \phi} \\  &\leq
		\OCl{\jbox{s \tapp t} \psi} \\ &=  
		\OBox{\tilde{I}(s \tapp t)} \OCl{\psi} 
		\end{align*}
		
		\item Condition $Al$-$\Atop$-$\LPBool_\CS$. Suppose that $\vdash_{\LPBool_\CS} \phi$. Thus, by the rule Int, $\vdash_{\LPBool_\CS} \Atop:\phi$. Then
		\[
		\OBox{1_\T}(\phi) = \OBox{\OCl{\Atop}} (\phi) =  \OCl{\Atop:\phi} =  \OCl{\top}. 
		\]
		\qed
	\end{itemize}
	
\end{proof}

\begin{theorem}[Soundness and completeness]\label{thm: completeness LPBool-CS over arbitrary Boolean algebra}
	$\vdash_{\LPBool_\CS} \phi$ iff $\Algebra_{\LPBool_\CS}^{\sf poly} \models \phi$.
\end{theorem}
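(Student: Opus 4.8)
The plan is to mimic the proof of Theorem~\ref{thm: completeness LPBool over arbitrary Boolean algebra} step for step, the only genuinely new feature being that term-denotations now live in the polynomial algebra $\OCl{\TermsBool}[\VTerms]$ and equality is interpreted through polynomial equality $\stackrel{p}{=}$ rather than pointwise equality of term-classes. Soundness is routine and I would dispatch it first: one verifies that $\BIExt$ sends every axiom instance to $\AtopElement$ and that $MP$ and $Int$ preserve this. The propositional schemes hold because $(A,\AbotElement,\Anot,\Ajoin)$ is Boolean; $Al$-$Appl$-$\LPBool_\CS$, $Al$-$Sum$-$\LPBool_\CS$, $Al$-$j4$-$\LPBool_\CS$, $Al$-$jT$-$\LPBool_\CS$ validate $Appl$, $Sum$, $j4$, $jT$; the term-Boolean axioms $B1$--$B5$ hold because $\OCl{\TermsBool}[\VTerms]$ is itself a Boolean algebra; $Eq1$ and $Eq2$ follow from the two-valued clause defining $\BIExt(s\lequal t)$ together with the homomorphic behaviour of $\tilde{I}$; and $Int$ is validated by $Al$-$\Atop$-$\LPBool_\CS$.

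For completeness I would take the Tarski-Lindenbaum algebra $\Algebra_{\LPBool_\CS}^p$ constructed above (shown in the preceding lemma to be a polynomial $\LPBool_\CS$ algebra), set $\OVal{p}:=\OCl{p}$ for $p\in\Prop$, and prove the Truth Lemma
\[
\BIExt(\phi)=\OCl{\phi}
\]
by induction on the complexity of $\phi$. The Boolean connectives are standard. The case $\phi=t:\psi$ is immediate from the definition of $\OBox{\tilde{I}(t)}$ in $\Algebra_{\LPBool_\CS}^p$, which gives $\BIExt(t:\psi)=\OBox{\tilde{I}(t)}(\psi)=\OCl{t:\psi}$.

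The delicate case, which I expect to be the main obstacle, is $\phi=s\lequal t$, where the universally quantified polynomial equality must be reconciled with provable equality. By definition $\BIExt(s\lequal t)=\AtopElement$ iff $\tilde{I}(s)\stackrel{p}{=}\tilde{I}(t)$, that is, iff $v(\tilde{I}(s))=v(\tilde{I}(t))$ for \emph{every} $v:\VTerms\to\OCl{\TermsBool}$; by Lemma~\ref{lem: tilde(I)(t) is t^v-I} this is the same as $s_I^v=t_I^v$ for every such $v$. For the forward direction I would specialise to $v(\justVarOne):=\OCl{\justVarOne}$ and invoke \eqref{eq: t^{v}_{I} = \OCl{t} completeness of LPBool_emptyset}, obtaining $\OCl{s}=\OCl{t}$ and hence $\vdash_{\LPBool_\CS}s\lequal t$; this is exactly the well-definedness argument of the Corollary above. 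The converse direction, that $\vdash_{\LPBool_\CS}s\lequal t$ forces $\tilde{I}(s)\stackrel{p}{=}\tilde{I}(t)$, holds because a provable term-equality is a Boolean-algebra identity and is therefore preserved under the evaluation $v\circ\tilde{I}$ for every $v$. Combining the two directions gives $\BIExt(s\lequal t)=\OCl{s\lequal t}$, completing the Truth Lemma. Completeness then follows as usual: if $\not\vdash_{\LPBool_\CS}\phi$ then $\OCl{\phi}\neq\OCl{\top}$, so $\BIExt(\phi)\neq\AtopElement$ and $\Algebra_{\LPBool_\CS}^p\not\models\phi$, whence $\Algebra_{\LPBool_\CS}^{\sf poly}\not\models\phi$.
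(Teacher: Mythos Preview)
Your proposal is correct and follows the same overall strategy as the paper: soundness by axiom-checking, completeness via the polynomial Tarski--Lindenbaum algebra $\Algebra_{\LPBool_\CS}^p$, the valuation $\OVal{p}=\OCl{p}$, and the Truth Lemma, with the $s\lequal t$ case being the only nontrivial one and the forward direction handled exactly as you describe (specialise to $v(\justVarOne)=\OCl{\justVarOne}$ and use~\eqref{eq: t^{v}_{I} = \OCl{t} completeness of LPBool_emptyset}).

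The one place where your argument differs from the paper is the converse implication in the $s\lequal t$ case. You argue directly that ``a provable term-equality is a Boolean-algebra identity and is therefore preserved under the evaluation $v\circ\tilde I$''. The paper instead invokes the already-established soundness part of Theorem~\ref{thm: completeness LPBool over arbitrary Boolean algebra}: from $\vdash_{\LPBool_\CS} s\lequal t$ one gets $s_I^v=t_I^v$ in \emph{every} full $\LPBool_\CS$ algebra, in particular in the Tarski--Lindenbaum algebra $\Algebra_{\LPBool_\CS}^f$ over $\OCl{\TermsBool}$ for every $v:\VTerms\to\OCl{\TermsBool}$; Lemma~\ref{lem: tilde(I)(t) is t^v-I} then yields $\tilde I(s)\stackrel{p}{=}\tilde I(t)$. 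Your direct route is fine in spirit, but the phrase ``Boolean-algebra identity'' is slightly inaccurate, since provable equalities may involve $\tapp$ and $\tinspect$; what you really need is that the provable equalities are universal identities in the signature $(\ttsum,-,\Abot,\tapp,\tinspect)$ following from $B1$--$B5$ plus congruence, and that $v\circ\tilde I$ is a homomorphism into $\OCl{\TermsBool}$ for this full signature --- which is precisely the content of Lemma~\ref{lem: tilde(I)(t) is t^v-I}. Either way the argument closes; the paper's version has the advantage of recycling a result already on the shelf.
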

\begin{proof}
	Soundness is straightforward. For completeness, define $\Val$ as follows:
	\begin{align*}
		\OVal{p} & := \OCl{p}, \qquad p \in \Prop.
	\end{align*}
	Then we show the Truth Lemma. For every $\phi \in \FormulaeBool$:
		\[
		\BIExt(\phi) = \OCl{\phi}.
		\]
	The proof is by induction on the complexity of $\phi$. We only check two cases:
	\begin{itemize}
		\setlength\itemsep{0.1cm}
		\item Case $\phi = t:\psi$: 
		
		$\BIExt(t: \phi) = \OBox{ \tilde{I}(t) }(\phi) =  \OCl{t:\phi}$.
		
		\item Case $\phi = s \lequal t$:
		
		Suppose that $\BIExt(s \lequal t) = \OCl{\top}$. Then $\tilde{I}(s) \stackrel{p}{=} \tilde{I}(t)$, and hence $v(\tilde{I}(s)) = v(\tilde{I}(t))$, for every $v : \VTerms \to \OCl{\TermsBool}$. Thus, by Lemma \ref{lem: tilde(I)(t) is t^v-I}, we get  $s^v_I = t^v_I$, for every $v : \VTerms \to \OCl{\TermsBool}$. Then, for $v(\justVarOne) = \OCl{x}$ we obtain $s^v_I = t^v_I$. By \eqref{eq: t^{v}_{I} = \OCl{t} completeness of LPBool_emptyset} in the proof of Theorem \ref{thm: completeness LPBool over arbitrary Boolean algebra}, we get $\OCl{s} = \OCl{t}$, and hence $\OCl{ s \lequal t} = \OCl{\top}$.\footnote{Note that the definition of the interpretation $I$ in the Tarski-Lindenbaum algebras $\Algebra_{\LPBool_\CS}^f$ (cf. Definition \ref{def: Tarski-Lindenbaum algebra for LPBool_emptyset}) and $\Algebra_{\LPBool_\CS}^p$ are the same.}
		
		For the converse, from $\OCl{ s \lequal t} = \OCl{\top}$ it follows that  $\vdash_{\LPBool_\CS} s \lequal t$. By soundness theorem \ref{thm: completeness LPBool over arbitrary Boolean algebra}, we have $s^v_I = t^v_I$ for every $\LPBool_\CS$ algebra $\Algebra$ over $\T$, every valuation $\Val$, and every $v : \VTerms \to T$. Thus, for the Tarski-Lindenbaum algebra  $\Algebra_{\LPBool_\emptyset}$ over $\OCl{\TermsBool}$ (cf. Definition \ref{def: Tarski-Lindenbaum algebra for LPBool_emptyset}), we have $s^v_I = t^v_I$, for every valuation $\Val$ and every $v : \VTerms \to \OCl{\TermsBool}$. Again since the definition of interpretation $I$ in the Tarski-Lindenbaum algebras $\Algebra_{\LPBool_\CS}$ and $\Algebra_{\LPBool_\CS}^p$ are the same, we get $s^v_I = t^v_I$, for every $v : \VTerms \to \OCl{\TermsBool}$. Hence, by Lemma \ref{lem: tilde(I)(t) is t^v-I}, $\tilde{I}(s) \stackrel{p}{=} \tilde{I}(t)$. Thus, $\BIExt(s \lequal t) = \OCl{\top}$.\\
		From the above two arguments, it follows that $\BIExt(s \lequal t) = \OCl{s \lequal t}$.
	\end{itemize}
		 \qed
		
\end{proof}

\subsection{Bi-representation theorem for full $\LPBool_\CS$ algebras}

At the end, we present a bi-representation theorem for polynomial $\LPBool_\CS$ algebras similar to Section \ref{sec: Bi-representation theorem LPB-emptyset}.

Given $\T = (T, 0_\T, -_\T, \ttsum_\T, \tapp_\T, \tinspect_\T)$ and $\T' = (T', 0_{\T'}, -_{\T'}, \ttsum_{\T'}, \tapp_{\T'}, \tinspect_{\T'})$, note that every isomorphism $g: \T \to \T'$ can be naturally extended to an isomorphism between the polynomial structures $\T[\VTerms]$ and $\T'[\VTerms]$ where
\[
g(\justVarOne) := \justVarOne, \qquad \mbox{ for every } \justVarOne \in \VTerms.
\]
By the abuse of notation we denote this extension again by $g$.

\begin{definition}
	Let $\Algebra = (A, \AbotElement, \Anot, \Ajoin, \OBox{\alpha}, I)_{\alpha \in T[\VTerms]}$ and $\Algebra' = (A', \AbotElement', \Anot', \Ajoin', \Box'_{\alpha}, I')_{\alpha \in T'[\VTerms]}$ be polynomial $\LPBool_\CS$ algebras over $T[\VTerms]$ and $T'[\VTerms]$ respectively, where $\T = (T, 0_\T, -_\T, \ttsum_\T, \tapp_\T, \tinspect_\T)$ and $\T' = (T', 0_{\T'}, -_{\T'}, \ttsum_{\T'}, \tapp_{\T'}, \tinspect_{\T'})$. A bi-isomorphism  between $\Algebra$ and $\Algebra'$ is a pair of Boolean isomorphisms $(f,g)$ such that $f: (A, \AbotElement, \Anot, \Ajoin) \to (A', \AbotElement', \Anot', \Ajoin')$ and  $g: (T, 0_\T, -_\T, \ttsum_\T) \to (T', 0_{\T'}, -_{\T'}, \ttsum_{\T'})$ satisfying the following conditions:
	\begin{align*}
		f(\OBox{\alpha}(\phi)) &= \Box'_{g(\alpha)}(\phi), \qquad \mbox{ for $\alpha \in T[\VTerms]$ and $\phi \in \FormulaeBool$, }
		\\
		g(I(\justConsThree)) &= I'(\justConsThree), \qquad\quad \mbox{ for $\justConsThree \in \CTerms \cup \{ \Abot\}$, }
		\\
		g(\alpha \tapp_{\T[\VTerms]} \beta) &= g(\alpha) \tapp_{\T'[\VTerms]} g(\beta) ,  \qquad\mbox{ for $\alpha, \beta \in T[\VTerms]$, }
		\\
		g(\tinspect_{\T[\VTerms]} \alpha) &=	\tinspect_{\T'[\VTerms]} g(\alpha), \qquad\mbox{ for $\alpha \in T[\VTerms]$}.
	\end{align*}
	
\end{definition}

\begin{definition}
	Given two set algebras $(A, \emptyset, \setminus, \cup)$ and $(B, \emptyset, \setminus, \cup)$, a polynomial $\LPBool_\CS$ set algebra is a structure 
	$$\Algebra = (A, \emptyset, \setminus, \cup, \OBox{\alpha}, I)_{\alpha \in B[\VTerms]}$$ 
	over $B[\VTerms]$, where $\mathcal{B} = (B, \emptyset, \setminus, \cup, \tapp_\mathcal{B}, \tinspect_\mathcal{B})$, satisfying the conditions of Definition \ref{def: LPB-CS algebra}.
\end{definition}

Similar to Section \ref{sec: Bi-representation theorem LPB-emptyset}, we construct two isomorphisms for a given polynomial $\LPBool_\CS$ algebra $\Algebra$ over $T[\VTerms]$: One for the algebra $\Algebra$ and one for the polynomial structure $\T[\VTerms]$. This leads to a bi-isomorphism of polynomial $\LPBool_\CS$ algebras.

\begin{theorem}[Bi-Representation Theorem]\label{thm: Representation Theorem LPBool-CS}
	Every polynomial $\LPBool_\CS$ algebra is bi-isomorphic to a polynomial $\LPBool_\CS$ set algebra.
\end{theorem}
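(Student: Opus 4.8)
The plan is to follow the proof of the Bi-Representation Theorem~\ref{thm: Representation Theorem LPBool} for full $\LPBool_\CS$ algebras almost verbatim, inserting the polynomial layer at the points where terms are interpreted. Let $\Algebra = (A, \AbotElement, \Anot, \Ajoin, \OBox{\alpha}, I)_{\alpha \in T[\VTerms]}$ be a polynomial $\LPBool_\CS$ algebra over $\T[\VTerms]$. First I would apply the Stone representation theorem to the term Boolean algebra $(T, 0_\T, -_\T, \ttsum_\T)$, obtaining an isomorphism $g$ onto a set algebra $(P, \emptyset, \setminus, \cup)$, and transport the operators $\tapp_\T$ and $\tinspect_\T$ along $g$ to define $\tapp_\mathcal{P}$ and $\tinspect_\mathcal{P}$ on $P$, so that $g$ becomes an isomorphism $\T \to \mathcal{P}$ with $\mathcal{P} = (P, \emptyset, \setminus, \cup, \tapp_\mathcal{P}, \tinspect_\mathcal{P})$, exactly as in the full case. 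The decisive new step is to invoke the remark preceding the bi-isomorphism definition: $g$ extends canonically to an isomorphism $\T[\VTerms] \to \mathcal{P}[\VTerms]$ by setting $g(\justVarOne) := \justVarOne$ for every $\justVarOne \in \VTerms$. A second application of Stone to $(A, \AbotElement, \Anot, \Ajoin)$ produces an isomorphism $f$ onto a set algebra $(B, \emptyset, \setminus, \cup)$, and I would then define $\OBox{g(\alpha)}^\prime(\phi) := f(\OBox{\alpha}(\phi))$ for $\alpha \in T[\VTerms]$ together with $I'(\justConsThree) := g(I(\justConsThree))$, obtaining the candidate set algebra $\mathcal{B} = (B, \emptyset, \setminus, \cup, \OBox{\alpha'}^\prime, I')_{\alpha' \in P[\VTerms]}$.

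The bridge that connects the two polynomial interpretations is the identity $\tilde{I'}(t) = g(\tilde{I}(t))$ for every $t \in \TermsBool$, which I would establish by a routine induction on the complexity of $t$: the base cases use $g(\justVarOne) = \justVarOne = \tilde{I'}(\justVarOne)$ and $g(I(\justConsThree)) = I'(\justConsThree)$, and the induction steps use that $g$ is a homomorphism of the polynomial structures. With this in hand, the verification that $\mathcal{B}$ is a polynomial $\LPBool_\CS$ algebra proceeds as in Theorem~\ref{thm: Representation Theorem LPBool}: the conditions $Al$-$Appl$, $Al$-$Sum$, $Al$-$\Atop$, $Al$-$CS$, and $Al$-$j4$ from Definition~\ref{def: LPB-CS algebra} reduce to transport-of-structure computations that rely only on $f$ being an order isomorphism and on $g$ commuting with $\tapp$ and $\tinspect$. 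For $Al$-$jT$ I would, given any valuation $\Val'$ on $\mathcal{B}$, put $\Val = f^{-1} \circ \Val'$ and prove $f(\BIExt(\phi)) = \tilde{\theta'}(\phi)$ by induction on $\phi$; the case $\phi = t:\psi$ is exactly $f(\OBox{\tilde{I}(t)}(\psi)) = \OBox{g(\tilde{I}(t))}^\prime(\psi) = \OBox{\tilde{I'}(t)}^\prime(\psi) = \tilde{\theta'}(t:\psi)$, after which $Al$-$jT$ for $\mathcal{B}$ follows from $\OBox{g(\alpha)}^\prime(\phi) = f(\OBox{\alpha}(\phi)) \leq f(\BIExt(\phi)) = \tilde{\theta'}(\phi)$.

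I expect the main obstacle to be the equality-predicate clause $\phi = s \lequal t$, which in the polynomial setting is governed by the \emph{semantic} equality $\stackrel{p}{=}$ rather than by a single evaluation. What must be shown is that the extended isomorphism $g: \T[\VTerms] \to \mathcal{P}[\VTerms]$ both preserves and reflects $\stackrel{p}{=}$, i.e.\ that $\tilde{I}(s) \stackrel{p}{=} \tilde{I}(t)$ holds iff $\tilde{I'}(s) \stackrel{p}{=} \tilde{I'}(t)$ does. I would isolate this as a small lemma stating that evaluation commutes with $g$: for any $v: \VTerms \to T$ and the induced $w = g \circ v: \VTerms \to P$, one has $w(g(\alpha)) = g(v(\alpha))$ for every polynomial $\alpha \in T[\VTerms]$, proved by induction on the build-up of $\alpha$. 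Since $g$ is a bijection, this lets any evaluation over $P$ be pulled back to one over $T$ and delivers both directions of the biconditional, so that $f(\BIExt(s \lequal t)) = \tilde{\theta'}(s \lequal t)$ using $f(\AtopElement) = B$ and $f(\AbotElement) = \emptyset$. Once this is secured, the pair $(f,g)$ is a bi-isomorphism of $\Algebra$ and $\mathcal{B}$, and $\mathcal{B}$ is visibly a polynomial $\LPBool_\CS$ set algebra.
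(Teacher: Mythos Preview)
Your proposal is correct and follows essentially the same route as the paper's own proof: both apply Stone twice, transport the term operators along $g$, extend $g$ canonically to the polynomial layer $\T[\VTerms] \to \mathcal{P}[\VTerms]$, define $\Box'_{g(\alpha)}$ and $I'$ via $f$ and $g$, establish $\tilde{I'}(t) = g(\tilde{I}(t))$ by induction on $t$, and then verify the conditions of Definition~\ref{def: LPB-CS algebra}. Your explicit treatment of the equality-predicate clause via the commutation lemma $w(g(\alpha)) = g(v(\alpha))$ is in fact more detailed than the paper, which simply asserts $f(\tilde{\theta}(\phi)) = \tilde{\theta'}(\phi)$ in the $Al$-$jT$ step without isolating the $s \lequal t$ case.
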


\begin{proof}
	Suppose that $\Algebra = (A, \AbotElement, \Anot, \Ajoin, \OBox{\alpha}, I)_{\alpha \in T[\VTerms]}$ is a polynomial $\LPBool_\CS$ algebra over $T[\VTerms]$, where $\T = (T, 0_\T, -_\T, \ttsum_\T, \tapp_\T, \tinspect_\T)$. Considering the Boolean algebra $(T, 0_\T, -_\T, \ttsum_\T)$, by the Stone representation theorem, there exists an isomorphism $g$ from $(T, 0_\T, -_\T, \ttsum_\T)$ to a set algebra $\mathcal{P} = (P, \emptyset, \setminus, \cup)$.  Similar to the proof of Theorem \ref{thm: Representation Theorem LPBool}, $g$ is extended to an isomorphism from $\T = (T, 0_\T, -_\T, \ttsum_\T, \tapp_\T, \tinspect_\T)$ to  $(P, \emptyset, \setminus, \cup, \tapp_\mathcal{P}, \tinspect_\mathcal{P})$. Let $\mathcal{P}_{\T} = (P, \emptyset, \setminus, \cup, \tapp_\mathcal{P}, \tinspect_\mathcal{P})$. As mentioned before, $g$ can be further extended to an isomorphism from $\T[\VTerms]$ to $\mathcal{P}_\T[\VTerms]$.

	Next, by the Stone representation theorem, $(A, \AbotElement, \Anot, \Ajoin)$ is isomorphic to a set algebra $(B, \emptyset, \setminus, \cup)$ via isomorphism $f$. Define $\OBox{\alpha'}^\prime$ (for $\alpha' \in P[\VTerms]$) and the interpretation $I'$ as follows: 
	\begin{equation}\label{eq: induced relation via bi-isomorphism Bool-CS}
		\OBox{g(\alpha)}^\prime (\phi) := f( \OBox{\alpha}(\phi)), \qquad\mbox{ for $\alpha \in T[\VTerms]$. }
	\end{equation}
	\begin{equation}\label{eq: induced relation via bi-isomorphism Bool-interpretation-CS}
		I'(\justConsThree) := g(I(\justConsThree)), \qquad\mbox{ for $\justConsThree \in \CTerms \cup \{ \Abot\}$. }
	\end{equation}
	Let $\mathcal{B} = (B, \emptyset, \setminus, \cup, \OBox{\alpha'}^\prime, I')_{\alpha' \in P[\VTerms]}$. It is not difficult to show that for any $t \in \TermsBool$:
	\begin{equation}\label{eq: fact bi-isomorphism Bool-interpretation-CS}
		\tilde{I'} (t) = g( \tilde{I}(t) ).
	\end{equation}
	The proof is by a simple induction on the complexity of $t$.
	Then, we show that $\mathcal{B}$ is a polynomial $\LPBool_\CS$  algebra. 
	To this end we only check conditions $Al$-$Appl$-$\LPBool_\CS$, $Al$-$\CS$-$\LPBool_\CS$, $Al$-$j4$-$\LPBool_\CS$ and $Al$-$jT$-$\LPBool_\CS$ from Definition \ref{def: LPB-CS algebra} (other cases can be verified similarly).
	
	
	For $Al$-$Appl$-$\LPBool_\CS$, suppose that $\AElementOne ,\AElementTwo  \in \mathcal{A}$:
	\begin{eqnarray*}
		& \OBox{g(\alpha)}^\prime(\phi \limplies \psi) \Ameet \OBox{g(\beta)}^\prime(\phi)=\\		
		& f(\OBox{\alpha}(\phi \limplies \psi)) \Ameet f(\OBox{\beta}(\phi)) =
		\\		
		& f(\OBox{\alpha}(\phi \limplies \psi) \Ameet \OBox{\beta}(\phi)) \leq \\ 
		& f(\OBox{\alpha \tapp_{\T[\VTerms]} \beta}(\psi))= \OBox{g(\alpha) \tapp_{\mathcal{P}_\T[\VTerms]} g(\beta)}^\prime(\psi).
	\end{eqnarray*}
	
	For $Al$-$\CS$-$\LPBool_\CS$, suppose $\justConsThree:\phi \in \CS$, then we have
	\[
	\OBox{I'(\justConsThree)}^\prime (\phi) = \OBox{g(I(\justConsThree))}^\prime (\phi) = f(\OBox{I(\justConsThree)}(\phi)) = f(\AtopElement) = B.
	\]
	
	For $Al$-$j4$-$\LPBool_\CS$, note that since $g$ is an isomorphism from $\T[\VTerms]$ to $\mathcal{P}_\T[\VTerms]$, for every $t \in \TermsBool$ there is $\alpha \in \T[\VTerms]$ such that $g(\alpha) = \tilde{I'}(t)$. Furthermore, from \eqref{eq: fact bi-isomorphism Bool-interpretation-CS} it follows that $\tilde{I} (t) = g^{-1} (\tilde{I'}(t)) = g^{-1} (g(\alpha)) = \alpha$. Thus,
	\begin{align*}
		\OBox{\tilde{I'}(t)}^\prime (\phi) & = \OBox{g(\alpha)}^\prime (\phi) = f( \OBox{\alpha}(\phi)) = f( \OBox{\tilde{I}(t)}(\phi)) \leq f(\OBox{\tinspect_{T[\VTerms]} \tilde{I}(t)} (t:\phi)) = f(\OBox{\tinspect_{T[\VTerms]} \alpha} (t:\phi)) \\ & = \OBox{g(\tinspect_{T[\VTerms]} \alpha)}^\prime (t:\phi)  = \OBox{\tinspect_{\mathcal{P}_\T[\VTerms]} g(\alpha)}^\prime (t:\phi).
	\end{align*}

	For $Al$-$jT$-$\LPBool_\CS$, suppose that $\Val' : \Prop \to B$ is  a valuation on $\mathcal{B}$. Then, $\Val = f^{-1} \circ \Val'$ would be a valuation on $\Algebra$. Thus, we have
	\[
	\OBox{g(\alpha)}^\prime (\phi) = f( \OBox{\alpha}(\phi)) \leq f(\tilde{\theta} (\phi)) = \tilde{\theta'} (\phi).
	\]
	The above inequality follows from the fact that $f$ is an isomorphism  and $\OBox{\alpha}(\phi) \leq \tilde{\theta} (\phi)$.
	
	It is obvious that $\mathcal{B}$ is a polynomial $\LPBool_\CS$ set algebra.	\qed
\end{proof}

\begin{theorem}
	$\LPBool_\CS$ is complete with respect to polynomial $\LPBool_\CS$ set algebras.
\end{theorem}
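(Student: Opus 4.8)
The plan is to assemble this result from the two main outputs of the section, exactly as the analogous statement for full $\LPBool_\CS$ set algebras was assembled from Theorems \ref{thm: Representation Theorem LPBool} and \ref{thm: completeness LPBool over arbitrary Boolean algebra}: combine the Completeness Theorem \ref{thm: completeness LPBool-CS over arbitrary Boolean algebra} with the Bi-Representation Theorem \ref{thm: Representation Theorem LPBool-CS}, and transport a refuting valuation along the bi-isomorphism. First I would argue contrapositively: assume $\not\vdash_{\LPBool_\CS} \phi$. By Theorem \ref{thm: completeness LPBool-CS over arbitrary Boolean algebra} there is a polynomial $\LPBool_\CS$ algebra $\Algebra = (A, \AbotElement, \Anot, \Ajoin, \OBox{\alpha}, I)_{\alpha \in T[\VTerms]}$ and a valuation $\Val$ with $\BIExt(\phi) \neq \AtopElement_\Algebra$. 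By Theorem \ref{thm: Representation Theorem LPBool-CS} there is a bi-isomorphism $(f,g)$ from $\Algebra$ onto a polynomial $\LPBool_\CS$ set algebra $\mathcal{B} = (B, \emptyset, \setminus, \cup, \OBox{\alpha'}^\prime, I')_{\alpha' \in P[\VTerms]}$.

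Next I would set $\Val'(p) := f(\Val(p))$ and prove the transfer identity $\tilde{\theta'}(\psi) = f(\BIExt(\psi))$ for all $\psi \in \FormulaeBool$ by induction on the complexity of $\psi$. The propositional cases are immediate from $f$ being a Boolean isomorphism. For $\psi = t:\chi$ I would combine \eqref{eq: fact bi-isomorphism Bool-interpretation-CS}, namely $\tilde{I'}(t) = g(\tilde{I}(t))$, with \eqref{eq: induced relation via bi-isomorphism Bool-CS} to compute
\[
\tilde{\theta'}(t:\chi) = \OBox{\tilde{I'}(t)}^\prime(\chi) = \OBox{g(\tilde{I}(t))}^\prime(\chi) = f(\OBox{\tilde{I}(t)}(\chi)) = f(\BIExt(t:\chi)).
\]
Once this identity is established, $f(\AtopElement_\Algebra) = \AtopElement_\mathcal{B}$ gives $\tilde{\theta'}(\phi) = f(\BIExt(\phi)) \neq \AtopElement_\mathcal{B}$, hence $\mathcal{B} \not\models \phi$, which is the desired conclusion.

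The hard part will be the equality case $\psi = s \lequal t$, where I must show that $\tilde{I'}(s) \stackrel{p}{=} \tilde{I'}(t)$ in $\mathcal{P}_\T[\VTerms]$ holds precisely when $\tilde{I}(s) \stackrel{p}{=} \tilde{I}(t)$ in $\T[\VTerms]$. Using $\tilde{I'} = g \circ \tilde{I}$, this reduces to checking that the extension of the Boolean isomorphism $g$ to the polynomial structures (with $g$ the identity on $\VTerms$) both preserves and reflects the polynomial equality $\stackrel{p}{=}$. The key observation is that evaluation commutes with $g$: for any polynomial $h$ and any $v':\VTerms \to P$ one has $v'(g(h)) = g((g^{-1}\circ v')(h))$, so that quantifying over all $v':\VTerms \to P$ corresponds bijectively, via $v = g^{-1}\circ v'$, to quantifying over all $v:\VTerms \to T$; injectivity of $g$ then converts the pointwise equalities on the two sides into one another. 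This yields the biconditional, hence $\tilde{\theta'}(s \lequal t) = f(\BIExt(s \lequal t))$, completing the induction and therefore the theorem.
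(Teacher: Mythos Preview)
Your proposal is correct and follows exactly the approach the paper intends: the paper's own proof is the one-liner ``Follows from the Bi-representation Theorem \ref{thm: Representation Theorem LPBool-CS} and the Completeness Theorem \ref{thm: completeness LPBool-CS over arbitrary Boolean algebra}'', and you have simply unpacked that into the explicit transport-of-valuation argument (mirroring what the paper did in detail for the $\LP_\CS$ case). Your handling of the equality case via the bijection $v \leftrightarrow g^{-1}\circ v'$ is a detail the paper leaves implicit, but it is correct and in the same spirit.
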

\begin{proof}
	Follows from the Bi-representation Theorem \ref{thm: Representation Theorem LPBool-CS} and the Completeness Theorem \ref{thm: completeness LPBool-CS over arbitrary Boolean algebra}. \qed
\end{proof}

\section*{Conclusion}

In this paper we have introduced an algebraic semantics for the logic of proofs. These algebraic models are obtained from Boolean algebras by adding countably infinite functions on formulas. We have also extended the language of $\LP$ in a way that proof terms constitute a Boolean algebra. The language of the resulting logic $\LPBool$ includes an equality predicate. We have proved completeness theorems and certain generalizations of Stone's representation theorem for all of the aforementioned algebras.

Comparing modal algebras with $\LP$ algebras, we see that the class of $\LP$ algebras does not form a variety; in other words, the $\LP$ algebras are not equationally definable. While pre $\LP_\CS$ algebras form a variety in the above sense, the condition $Al$-$jT$-$\LP_\CS$ in full algebras cannot be expressed by an equation.

There remain many research problems deserving further study. For instance, a natural question to ask is whether it is possible to present regular algebras with a finite set $\nabla$ for $\LP_\CS$. Another problem is to present algebraic models for other justification logics. Although it is straightforward to present algebraic semantics for fragments of $\LP$, such as $\JLCS{JT}{}$ and $\JLCS{J4}{}$,\footnote{To this end, it is enough to omit the condition $Al$-$j4$ to obtain $\JLCS{JT}{}$-algebras, and omit the condition $Al$-$jT$ to obtain $\JLCS{J4}{}$-algebras.} it would be interesting to extend the results of this paper to other justification logics, such as extensions of $\LP$.

Another interesting direction to extend this work is to consider fuzzy justification logics. Various fuzzy justification logics have been introduced in the literature (see \cite{Gha14fuzzy-arXiv,Ghari-IGPL-2016,Pischke-SL-2020,Pischke-IGPL-2021}), however none of these logics have algebraic semantics. One might expect that $\LP$ algebras can be modified in order to construct fuzzy algebraic semantics, such as  MV algebras, for a fuzzy version of $\LP$.

\section*{Acknowledgement}

We wish to express our deep gratitude to Nicholas Pischke for reading an earlier draft of the paper and providing many precious suggestions.

\bibliography{library}

\begin{thebibliography}{10}

\bibitem{Art95TR}
Sergei Artemov.
\newblock {Operational modal logic}.
\newblock Technical Report Technical Report MSI 95--29, Cornell University, dec
  1995.

\bibitem{Art01BSL}
Sergei Artemov.
\newblock {Explicit Provability and Constructive Semantics}.
\newblock {\em Bulletin of Symbolic Logic}, 7(1):1--36, mar 2001.

\bibitem{Art12SLnonote}
Sergei Artemov.
\newblock {The Ontology of Justifications in the Logical Setting}.
\newblock {\em Studia Logica}, 100(1--2):17--30, apr 2012.
\newblock \href {http://dx.doi.org/10.1007/s11225-012-9387-x}
  {\path{doi:10.1007/s11225-012-9387-x}}.

\bibitem{ArtFit11SEP}
Sergei Artemov and Melvin Fitting.
\newblock {Justification Logic}.
\newblock In Edward~N Zalta, editor, {\em The Stanford Encyclopedia of
  Philosophy}. Fall 2012 edition, 2012.
\newblock URL:
  \url{http://plato.stanford.edu/archives/fall2012/entries/logic-justification/}.

\bibitem{Art-Fit-Book-2019}
Sergei Artemov and Melvin Fitting.
\newblock {\em {Justification Logic: Reasoning with Reasons}}.
\newblock Cambridge University Press, 2019.
\newblock URL:
  \url{https://www.cambridge.org/core/product/identifier/9781108348034/type/book},
  \href {http://dx.doi.org/10.1017/9781108348034}
  {\path{doi:10.1017/9781108348034}}.

\bibitem{Baur-Studer-2020}
Michael Baur and Thomas Studer.
\newblock {Semirings of Evidence}.
\newblock In Mehdi Dastani, Huimin Dong, and Leon van~der Torre, editors, {\em
  Logic and Argumentation}, pages 42--57, Cham, 2020. Springer International
  Publishing.

\bibitem{Baur-Studer-JLC-2021}
Michael Baur and Thomas Studer.
\newblock {Semirings of Evidence}.
\newblock {\em Journal of Logic and Computation}, feb 2021.
\newblock URL:
  \url{https://academic.oup.com/logcom/advance-article/doi/10.1093/logcom/exab007/6148823},
  \href {http://dx.doi.org/10.1093/LOGCOM/EXAB007}
  {\path{doi:10.1093/LOGCOM/EXAB007}}.

\bibitem{Blackburn-Rijke-Venema-2001}
Patrick Blackburn, Maarten de~Rijke, and Yde Venema.
\newblock {\em {Modal Logic}}, volume~53 of {\em Cambridge Tracts in
  Theoretical Computer Science}.
\newblock Cambridge University Press, 2001.
\newblock URL: \url{https://doi.org/10.1017/CBO9781107050884}, \href
  {http://dx.doi.org/10.1017/CBO9781107050884}
  {\path{doi:10.1017/CBO9781107050884}}.

\bibitem{Bre00TR}
Vladimir~N Brezhnev.
\newblock {On explicit counterparts of modal logics}.
\newblock Technical Report CFIS 2000--05, Cornell University, 2000.

\bibitem{BrueGoeKuz10AiML}
Kai Br{\"{u}}nnler, Remo Goetschi, and Roman Kuznets.
\newblock {A Syntactic Realization Theorem for Justification Logics}.
\newblock In Lev~[D.] Beklemishev, Valentin Goranko, and Valentin Shehtman,
  editors, {\em Advances in Modal Logic, Volume 8}, pages 39--58. College
  Publications, 2010.

\bibitem{Chagrov-Zakharyaschev-1997}
Alexander~V Chagrov and Michael Zakharyaschev.
\newblock {\em {Modal Logic}}, volume~35 of {\em Oxford logic guides}.
\newblock Oxford University Press, 1997.

\bibitem{Fit05APAL}
Melvin Fitting.
\newblock {The logic of proofs, semantically}.
\newblock {\em Annals of Pure and Applied Logic}, 132(1):1--25, feb 2005.
\newblock \href {http://dx.doi.org/10.1016/j.apal.2004.04.009}
  {\path{doi:10.1016/j.apal.2004.04.009}}.

\bibitem{Fit16APAL}
Melvin Fitting.
\newblock {Modal logics, justification logics, and realization}.
\newblock {\em Annals of Pure and Applied Logic}, 167(8):615--648, 2016.
\newblock URL:
  \url{http://www.sciencedirect.com/science/article/pii/S016800721630029X},
  \href {http://dx.doi.org/http://dx.doi.org/10.1016/j.apal.2016.03.005}
  {\path{doi:http://dx.doi.org/10.1016/j.apal.2016.03.005}}.

\bibitem{Gha14fuzzy-arXiv}
Meghdad Ghari.
\newblock {Justification Logics in a Fuzzy Setting}.
\newblock {\em ArXiv e-prints}, jul 2014.
\newblock \href {http://arxiv.org/abs/1407.4647} {\path{arXiv:1407.4647}}.

\bibitem{Ghari-IGPL-2016}
Meghdad Ghari.
\newblock {Pavelka-style fuzzy justification logies}.
\newblock {\em Logic Journal of the IGPL}, 24(5):743--773, 2016.
\newblock \href {http://dx.doi.org/10.1093/jigpal/jzw019}
  {\path{doi:10.1093/jigpal/jzw019}}.

\bibitem{Ghari-APAL-2017}
Meghdad Ghari.
\newblock {Labeled sequent calculus for justification logics}.
\newblock {\em Annals of Pure and Applied Logic}, 168(1):72--111, 2017.
\newblock \href {http://arxiv.org/abs/arXiv:1401.1065v1}
  {\path{arXiv:arXiv:1401.1065v1}}, \href
  {http://dx.doi.org/10.1016/j.apal.2016.08.006}
  {\path{doi:10.1016/j.apal.2016.08.006}}.

\bibitem{GoeKuz12APAL}
Remo Goetschi and Roman Kuznets.
\newblock {Realization for justification logics via nested sequents: Modularity
  through embedding}.
\newblock {\em Annals of Pure and Applied Logic}, 163(9):1271--1298, sep 2012.
\newblock \href {http://dx.doi.org/10.1016/j.apal.2012.02.002}
  {\path{doi:10.1016/j.apal.2012.02.002}}.

\bibitem{KuzStu12AiML}
Roman Kuznets and Thomas Studer.
\newblock {Justifications, Ontology, and Conservativity}.
\newblock In Thomas Bolander, Torben Bra{\"{u}}ner, Silvio Ghilardi, and
  Lawrence Moss, editors, {\em Advances in Modal Logic, Volume 9}, pages
  437--458. College Publications, 2012.

\bibitem{Kuz-Stu-Book-2019}
Roman Kuznets and Thomas Studer.
\newblock {\em {Logics of Proofs and Justifications}}.
\newblock College Publications, 2019.
\newblock URL: \url{http://www.collegepublications.co.uk/logic/mlf/?00031}.

\bibitem{LehmannStuder2019}
Eveline Lehmann and Thomas Studer.
\newblock {Subset Models for Justification Logic}.
\newblock In Rosalie Iemhoff, Michael Moortgat, and Ruy de~Queiroz, editors,
  {\em Logic, Language, Information, and Computation}, pages 433--449, Berlin,
  Heidelberg, 2019. Springer Berlin Heidelberg.

\bibitem{Mkr97LFCS}
Alexey Mkrtychev.
\newblock {Models for the Logic of Proofs}.
\newblock In Sergei Adian and Anil Nerode, editors, {\em Logical Foundations of
  Computer Science, 4th International Symposium, LFCS'97, Yaroslavl, Russia,
  July 6--12, 1997, Proceedings}, volume 1234 of {\em Lecture Notes in Computer
  Science}, pages 266--275. Springer, 1997.
\newblock \href {http://dx.doi.org/10.1007/3-540-63045-7_27}
  {\path{doi:10.1007/3-540-63045-7_27}}.

\bibitem{Ono-Book-2019}
Hiroakira Ono.
\newblock {\em {Proof Theory and Algebra in Logic}}.
\newblock Short Textbooks in Logic. Springer Singapore, Singapore, 2019.
\newblock URL: \url{http://link.springer.com/10.1007/978-981-13-7997-0}, \href
  {http://dx.doi.org/10.1007/978-981-13-7997-0}
  {\path{doi:10.1007/978-981-13-7997-0}}.

\bibitem{Pac05PLS}
Eric Pacuit.
\newblock {A Note on Some Explicit Modal Logics}.
\newblock In {\em Proceedings of the 5th {P}anhellenic {L}ogic {S}ymposium},
  pages 117--125, Athens, Greece, jul 2005. University of {A}thens.

\bibitem{Pischke-SL-2020}
Nicholas Pischke.
\newblock {A Note on Strong Axiomatization of G{\"{o}}del Justification Logic}.
\newblock {\em Studia Logica}, 108(4):687--724, 2020.
\newblock URL: \url{https://doi.org/10.1007/s11225-019-09871-4}, \href
  {http://dx.doi.org/10.1007/s11225-019-09871-4}
  {\path{doi:10.1007/s11225-019-09871-4}}.

\bibitem{Pischke-arXiv-2020}
Nicholas Pischke.
\newblock {On Intermediate Justification Logics}.
\newblock {\em ArXiv e-prints}, may 2020.
\newblock URL: \url{https://arxiv.org/abs/2005.13854v2}, \href
  {http://arxiv.org/abs/2005.13854} {\path{arXiv:2005.13854}}.

\bibitem{Pischke-IGPL-2021}
Nicholas Pischke.
\newblock {G{\"{o}}del justification logics and realization}.
\newblock {\em Logic Journal of the IGPL}, 2021.
\newblock URL: \url{https://doi.org/10.1093/jigpal/jzaa070}, \href
  {http://dx.doi.org/10.1093/jigpal/jzaa070}
  {\path{doi:10.1093/jigpal/jzaa070}}.

\bibitem{Ren05TR}
Bryan Renne.
\newblock {Propositional Games with Explicit Strategies}.
\newblock Technical Report TR--2005012, {CUNY} {P}h.{D}. {P}rogram in
  {C}omputer {S}cience, oct 2005.

\bibitem{Rub06CSR}
Natalia Rubtsova.
\newblock {Evidence Reconstruction of Epistemic Modal Logic {$\mathsf{S5}$}}.
\newblock In Dima Grigoriev, John Harrison, and Edward~A Hirsch, editors, {\em
  Computer Science-- Theory and Applications, First International Computer
  Science Symposium in Russia, CSR 2006, St. Petersburg, Russia, June 8--12,
  2006, Proceedings}, volume 3967 of {\em Lecture Notes in Computer Science},
  pages 313--321. Springer, 2006.
\newblock \href {http://dx.doi.org/10.1007/11753728_32}
  {\path{doi:10.1007/11753728_32}}.

\bibitem{Standefe-IGPL-2019}
Shawn Standefer.
\newblock {Tracking reasons with extensions of relevant logics}.
\newblock {\em Logic Journal of the IGPL}, 27(4):543--569, jul 2019.
\newblock URL: \url{https://academic.oup.com/jigpal/article/27/4/543/5498855},
  \href {http://dx.doi.org/10.1093/JIGPAL/JZZ018}
  {\path{doi:10.1093/JIGPAL/JZZ018}}.

\end{thebibliography}

\end{document}